\numberwithin{equation}{section}
\newtheorem{thm}{Theorem}[section]
\newtheorem{prop}[thm]{Proposition}
\newtheorem{lem}[thm]{Lemma}
\newtheorem{cor}[thm]{Corollary}
\newtheorem{Def}[thm]{Definition}
\theoremstyle{definition}
\newtheorem{Ass}[thm]{Assumption}
\newtheorem{rem}[thm]{Remark}
\DeclareMathOperator{\DIV}{div}
\newcommand{\R}{\mathbb{R}}
\newcommand{\N}{\mathbb{N}}
\newcommand{\supp}{\text{supp}}
\newcommand{\diff}{\mathop{}\!\mathrm{d}}
\newcommand{\erww}[1]{\mathbb{E}\left[{#1}\right]}
\newcommand{\wstar}{\overset{\ast}{\rightharpoonup}}
\newcommand{\doublewidetilde}[1]{{%
  \mathpalette\double@widetilde{#1}%
}}
\newcommand{\double@widetilde}[2]{%
  \sbox\z@{$\m@th#1\widetilde{#2}$}%
  \ht\z@=.9\ht\z@
  \widetilde{\box\z@}%
}
\author[P. Gwiazda]{Piotr Gwiazda}
\address{Institute of Mathematics of Polish Academy of Sciences, Jana i J\k edrzeja \'Sniadeckich 8, 00-656 Warsaw, Poland}
\email{pgwiazda@mimuw.edu.pl}
\thanks{Piotr Gwiazda was supported by National Science Center, Poland through project no. 2021/43/B/ST1/02851.}
\author[J. Woźnicki]{Jakub Woźnicki}
\address{Faculty of Mathematics, Informatics and Mechanics, University of Warsaw, Stefana Banacha 2, 02-097 Warsaw, Poland; Institute of Mathematics of Polish Academy of Sciences, Jana i J\k edrzeja \'Sniadeckich 8, 00-656 Warsaw, Poland}
\email{jw.woznicki@student.uw.edu.pl}
\thanks{Jakub Woźnicki was supported by National Science Center, Poland through project no. 2021/43/O/ST1/03031}
\author[A. Wr\'oblewska-Kami\'nska]{Aneta Wr\'oblewska-Kami\'nska}
\address{Institute of Mathematics of Polish Academy of Sciences, Jana i J\k edrzeja \'Sniadeckich 8, 00-656 Warsaw, Poland}
\email{awrob@impan.pl}
\thanks{Aneta Wr\'oblewska-Kami\'nska was supported by National Science Center, Poland through project no. 2020/38/E/ST1/00469.}
\author[A. Zimmermann]{Aleksandra Zimmermann}
	\address{Institute of Mathematics, Clausthal University of Technology, 38678 Clausthal-Zellerfeld, Germany}
\email{aleksandra.zimmermann@tu-clausthal.de}
\begin{document}

\title[Stochastic parabolic equations]{Stochastic parabolic equations in Musielak--Orlicz spaces with discontinuous in time $N$-function}

\begin{abstract}
We consider a stochastic parabolic partial differential equation with Dirichlet boundary conditions, multiplicative stochastic noise, and a monotone parabolic operator $A$. The growth and coercivity of $A$ is controlled by a general $N$-function $M$, which depends on time, and spatial variable, but we do not assume any regularity with respect to the former. We show the existence of weak solutions to such system. As auxiliary result, we also provide the proof for the It\^{o}'s formula in Orlicz spaces. This general result applies to the ones studied in the literature, such as $p(t, x)$-Laplacian and double phase problems.
\end{abstract}

\keywords{parabolic equations, stochastic forcing, non-standard growth, generalized Lebesgue space, Musielak--Orlicz space}
\subjclass[2000]{35K51, 46E30, 60H15}

\maketitle

\section{Introduction}
We are interested in a stochastic partial differential equation
\begin{align}\label{eq:main_sys}
    \left\{\begin{array}{ll}
         \diff u - \DIV_x A(t, x, \nabla_x u)\diff t = h(u)\diff W &\text{ in }\Omega\times (0, T)\times D,  \\
         u = 0 &\text{ in }\Omega\times (0, T)\times\partial D,\\
         u(\omega, 0, x) = u_0 &\text{ in }\Omega\times D,
    \end{array}
    \right.
\end{align}
where $T > 0$ is a given time, $D\subset \R^d$ is a bounded, Lipschitz domain in arbitrary space dimension $d\in\mathbb{N}$, and $(\Omega, \mathcal{F}, P)$ is a probability space with a given, countably generated $\sigma$-field $\mathcal{F}$, a probability measure $P$ defined on it, and a normal filtration $(\mathcal{F}_t)_{t\geq 0}$. In our setting, the assumption of a countably generated $\sigma$-field will ensure the separability of some Lebesgue and Orlicz--Musielak spaces which will be used in the sequel.\\
When it comes to the Wiener process, we assume that $W=(W(t))_{t\geq 0}$ it is a $U$-valued Wiener process. More precisely let $(e_j)_{j\in\N}$ be an orthonormal basis of $L^2(D)$, and let $U$ be a Hilbert space defined as a closure of $L^2(D)$ with respect to the scalar product
$$
\langle u, v\rangle_U = \sum_{j \in \N}\frac{u_j\, v_j}{j^2},\text{ for }u = \sum_{j\in\N}u_j\,e_j,\quad v = \sum_{j \in \N}v_j\,e_j.
$$
Then $W=(W(t))_{t\geq 0}$ can be defined as 
$$
W(t) = \sum_{j\in\N}e_j\beta_j(t) = \sum_{j\in\N}\frac{1}{j}j e_j\beta_j(t),
$$
for a family of independent, real-valued Brownian motions $(\beta_j)_{j\in\mathbb{N}}$, $\beta_j=(\beta_j(t))_{t\geq 0}$ for all $j\in\mathbb{N}$, adapted to $(\mathcal{F}_t)_{t\geq 0}$ (see \cite[Section 2.5]{Liu2015Stochastic} for more information), and can be interpreted as $Q$-Wiener process with covariance Matrix $Q = \operatorname{diag}(1/j^2)$ and values in $U$. Since $Q^{1/2}(U)=L^2(D)$, the stochastic integral in the sense of Itô with respect to $(W(t))_{t\geq 0}$ may be defined for all integrable and predictable\footnote{Recall the predictable $\sigma$-field $\mathcal{P}_{T}:=\sigma(\{ F_s\times (s,t] \ | \ F_s\in \mathcal{F}_s, \ 0\leq s < t \leq T\} \cup \{ F_0 \times\{0\} \ | \ F_0\in \mathcal{F}_0 \})$ (see \cite[p. 33]{Liu2015Stochastic}). A mapping defined on $\Omega\times (0,T)$ with values in a separable Banach space $E$ is predictable if it is $\mathcal{P}_{T}$-measurable.} $\Phi: \Omega \times(0, T) \rightarrow H S\left(L^2(D)\right)$, where $HS(L^2(D))$ denotes the separable space of Hilbert-Schmidt operators from $L^2(D)$ to $L^2(D)$. As for the function $h$, the assumptions will be given in Assumption \ref{ass:func_h_multiplicative} in Section  \ref{sec:preliminaries}.
%we assume that it is integrable with a square and satisfies the growth conditions, that is $h: \Omega\times (0, T)\times L^2(D)\rightarrow L_2(U; L^2(D))$
%$$
%h(\omega, t, u)(e_j) = \{x\mapsto h_j(\omega, t, x, u(x))\},
%$$
%where for any $j\in \N$, $h_j: \Omega\times (0, T)\times \R^{d+1}\rightarrow \R$ is a Carath\'{e}odory function, such that for all $\lambda\in \R^d$, $h(\cdot,\lambda)\in \mathcal{N}_W^2(0, T; L^2(D))$ (see Section \ref{sec:preliminaries} for more information). 
When it comes to the parabolic operator $A$, we assume that it is strictly monotone and its growth and coercivity conditions are controlled by the so-called $N$-function $M:(0, T)\times\R\times\R^d \rightarrow \R$, i.e. for a.e. $(t, x)\in (0, T)\times D$, and all $\xi\in \R^d$ we have
\begin{align}\label{4}
M(t, x, \xi) + M^*(t, x, A(t, x, \xi)) \leq cA(t, x, \xi)\cdot\xi + g(t, x),
\end{align}
where $M^*$ denotes a convex conjugate of $M$, $c > 0$, and $g\in L^\infty((0, T)\times D)$. In the most classical setting, one could consider $M(t, x, \xi) = |\xi|^p$. Then,
$$
L^p(0, T; W^{1,p}_0(D)) \ni u \mapsto \DIV A(t, x, \nabla_x u)\in (L^{p'}(0, T; L^{p'}(D)))^*
$$
is given, and the standard approaches such as Galerkin approximation yield desired results, see \cite{brezis1979strongly, landes1981existence} for a deterministic case, and \cite{Krylov2007stochastic} for a stochastic case, as well as references therein. However, when $M$ does not have polynomial growth, and depends on $(t, x)$, the considerations become much more complicated. Then, one has to look for a solution $u$, such that $\nabla_x u$ belongs to the Musielak--Orlicz space $L_M(\Omega\times (0, T)\times D)$ (see Section \ref{sec:musielaki}). Two main difficulties appear.
\begin{enumerate}[label = (P\arabic*)]
    \item In general, the Musielak--Orlicz spaces do not factorize, that is
    $$
    L_M((0, T)\times D) \ncong L_M(0, T; L_M(D)).
    $$
    \item In general, the duality between $A$ and $\nabla_x u$ "is not correct", which means that due to \eqref{4} one can expect $A(t, x, \nabla_x u)\in L_{M^*}(\Omega\times (0, T)\times D)$ and $\nabla_x u\in L_M(\Omega\times (0, T)\times D)$, but $(L_M(\Omega\times(0, T)\times D))^* \ncong L_{M^*}(\Omega\times (0, T)\times D)$.\label{problem_2}
\end{enumerate}
First attempts in a general direction were made for $M$ which is isotropic, and  independent of $(t, x)$, that is $M(t, x, \xi) = N(|\xi|)$. Under the additional coercitivity assumption $|\xi|^2 << N(|\xi|)$ and the so-called $\Delta_2$-condition for a convex conjugate of $N$, i.e.:
\begin{align}\label{delta_2}
N^*(2 |\xi| ) \leq CN^*(|\xi|),
\end{align}
for some constant $C>0$, the satisfactory results in a deterministic case were obtained in \cite{donaldson1974inhomogeneous, robert1974inequations}. In \cite{elmahi2002strongly}, one can find a similar result, but assuming $N(|\xi|) << |\xi|^{d/(d-1)}$ and $N(C|\xi_1||\xi_2|) \leq N(|\xi_1|)N(|\xi_2|)$. The main advantage of assuming \eqref{delta_2} is the fact, that it gets rid of \ref{problem_2}.

A different classical approach is to try to obtain some sort of energy estimates for \eqref{eq:main_sys}. Thus, the question arises, when is $C^\infty_c$ dense in the Musielak--Orlicz space $L_M$ (at least in the sense of modular convergence, see Section \ref{sec:musielaki}). It is known that in the variable exponent Lebesgue spaces ($M(t, x, \xi) = |\xi|^{p(t, x)}$), for the density to hold, $p$ needs to satisfy some continuity constraints in $(t, x)$. Such a way to tackle this problem was first posed in \cite{elmahi2005parabolic}, but again only for $M(t, x, \xi) = N(|\xi|)$. Later, it was extended to more general $N$-functions $M$, but still keeping some continuity assumptions, for a deterministic case in \cite{chlebicka2018well, chlebicka2019parabolic, gwiazda2011parabolic, swierczewska2014nonlinear,swierczewska2014anisotropic}, and for a stochastic case in \cite{Bauzet2013onaptx, vallet2016thestochastic}.

All of those results have a disadvantage, in the form of assumed regularity in time of $M$. Indeed, the somehow standard example to illustrate it is a system
\begin{align*}
    \partial_t u = \left\{\begin{array}{ll}
         \DIV_x \nabla_x u &\text{in }(0, 1]\times D, \\
         \DIV_x(|\nabla_x u|^2\nabla_x u) &\text{in } (1, 2]\times D,
    \end{array}
    \right.
\end{align*}
which can be solved piecewisely, and therefore is well-posed. This simple motivation gave a recent surge to study the problem \eqref{eq:main_sys}, with a general $N$-function $M$, which is discontinuous in time. The deterministic case have been solved in \cite{bulicek2021parabolic} with a fully general $N$-function $M$, and a deterministic forcing in $L^\infty((0, T)\times D)$. This result was further expanded in \cite{bulicek2025parabolicnonstandardgrowth} to forcing in $L^1((0, T)\times D)$, but in the setting $M(t, x, \xi) = |\xi|^{p(t, x)}$, for $p$ discontinuous in time. At last, there is as well a result \cite{bulicek2023nonnewtonian} of well-posedness for a $p(t,x)$-Navier--Stokes type equation with Dirichlet boundary conditions.

Our goal in the presented work is to combine the results in \cite{bulicek2021parabolic} and \cite{vallet2016thestochastic}, to obtain well-posedness of \eqref{eq:main_sys} for a general $N$-function $M$. The main difference when comparing \eqref{eq:main_sys} with stochastic forcing, to the case in \cite{bulicek2021parabolic} with deterministic force (in terms of what $N$- functions $M$ we can consider), is the regularity of the force itself. Indeed, to obtain reasonable a priori bounds, one would like to have $\int_0^\cdot h\diff W(s) \in E_m(\Omega\times(0, T)\times D)$, where $m$ is a Young function dominating $M$ from above (see Section \ref{sec:musielaki}). Even for a "very nice" constant $h$ it does not seem possible to have an arbitrary growth of $m$, as then the question reduces to the integrability of a Gaussian. As its density is $\sim e^{-x^2}$, thus asymptotically $m$ should have maximally growth of $\sim e^{x^2}$. In fact, one can show that if $m \sim e^{x^2}$, then $\int_0^\cdot h\diff W(s) \in L_m(\Omega\times(0, T)\times D)$, but as our technique requires the space $E_m$, we needed to slightly reduce the growth to the Assumption \ref{ass:growth_Young}. Moreover, since we will work in the proof with a fixed $\omega\in \Omega$ and extract certain subsequences, we need the uniqueness of the limiting equation (so that the subsequence does not depend on the fixed $\omega$), thus our considerations reduce only to isotropic $N$-functions $M(t, x, \xi) = M(t, x, |\xi|)$.

The structure of the paper is as follows. In Section \ref{sec:musielaki} we recall basic definitions and properties of Musielak--Orlicz spaces. Section \ref{sec:preliminaries} serves as an introduction to notation, assumptions and it contains the statement of the main result. We conduct the proof in Section \ref{sec:additive} and \ref{sec:multiplicative}, first for the additive case (that is when the stochastic forcing does not depend on the solution), and then extend it to the multiplicative case. In the Appendix, one can find auxiliary results, in particular the proof of the It\^{o}'s formula in Orlicz spaces.

\section{Musielak--Orlicz spaces}\label{sec:musielaki}

In this section we recall basic theory of Musielak--Orlicz spaces. For more precise information we refer to an extensive book \cite{chlebicka2021book} and an article on the application to the deterministic PDEs \cite{bulicek2021parabolic}.

In what follows, we let $D\subset \R^d$ be a bounded domain, and fix arbitrary time horizon $T>0$. With this, we denote $Q_T := (0, T)\times D$.

\subsection{Basic definitions and properties}

\begin{Def}[Young function]
    We say that $m:[0, +\infty)\rightarrow [0, +\infty)$ is a Young function if it satisfies
    \begin{enumerate}[label = (Y\arabic*)]
        \item $m(s) = 0 \Longleftrightarrow s = 0$,
        \item $m$ is convex,
        \item $m$ is superlinear, i.e. $\liminf_{s\to 0^+}\frac{m(s)}{s} = 0$, $\limsup_{s\to+\infty}\frac{m(s)}{s} = +\infty$.
    \end{enumerate}
\end{Def}
\begin{rem}
		We recall that, since $m$ is convex and nonnegative, from $(Y1)$ it follows that $m$ is strictly increasing.
	\end{rem}
\begin{Def}[$N$-function]
    We say that $M:(0, T)\times D\times \R^d \rightarrow \R$ is an $N$-function if it satisfies
    \begin{enumerate}[label = (N\arabic*)]
        \item $M(t, x, \xi) = M(t, x, -\xi)$ for a.e. $(t, x)\in (0, T)\times D$ and every $\xi\in\R^d$,
        \item $M(t, x, \xi)$ is a Carath\'{e}odory function, that is for a.e. $(t, x)\in (0, T)\times D$ the mapping $\R^d\ni \xi\mapsto M(t, x, \xi)\in \R$ is a continuous function, and for every $\xi\in \R^d$ the mapping $(0, T)\times D\ni (t, x)\mapsto M(t, x, \xi)\in \R$ is measurable.
        \item for a.e. $(t, x)\in (0, T)\times D$ the map $\xi \mapsto M(t, x, \xi)$ is convex,
        \item there exist two Young functions $\overline{m}$, $m$, such that for a.e. $(t, x)\in (0, T)\times D$ and every $\xi\in \R^d$
        $$
        \overline{m}(|\xi|) \leq M(t, x, \xi)\leq m(|\xi|).
        $$ \label{def:N_func_Young_growth}
    \end{enumerate}
\end{Def}

\begin{Def}[Isotropic $N$-function]
    We say that the $N$-function $M$ is isotropic if
    $$
    M(t, x, \xi) = M(t, x, |\xi|).
    $$
\end{Def}

\begin{Def}[Convex conjugate]
    Let $m$ be a Young function. Then we define its convex conjugate $m^*$ as
    $$
        m^*(s) = \sup_{x\in [0, +\infty)}(s\,x - m(x)).
    $$
    For the $N$-function $M$, we define its convex conjugate $M^*$ by
    $$
        M^*(t, x, \eta) = \sup_{\xi\in \R^d}(\xi\cdot\eta - M(t,x,\xi)).
    $$
\end{Def}

\begin{lem}\textup{(Basic properties of $N$- and Young functions)}. Let $m$ be a Young function and $M$ be an $N$-function. Then
    \begin{itemize}
        \item Function $\frac{m(t)}{t}$ is nondecreasing.
        \item $m^*$ is a Young function.
        \item $M^*$ is an $N$-functiom.
        \item If $f_n : Q_T\rightarrow \R^d$ is a sequence of functions such that $\int_{Q_T}M(t, x, f_n)\diff x\diff t \leq C$ for some $C > 1$, then $\|f_n\|_{L_M} \leq C$.
    \end{itemize}
\end{lem}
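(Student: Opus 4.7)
My plan is to verify the four claims in order; each reduces to a direct application of convexity together with the axioms of Young or $N$-functions.

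For (1), convexity and $m(0)=0$ give $m(\lambda t) \leq \lambda m(t)$ for $\lambda \in [0,1]$; applied with $\lambda = s/t$ for $0 < s < t$, this yields $m(s)/s \leq m(t)/t$. For (4), the same convexity trick is decisive: since $C > 1$ and $M(t,x,0) = 0$, convexity of $M(t,x,\cdot)$ gives $M(t,x, f_n/C) \leq (1/C)\, M(t,x, f_n)$ pointwise; integrating yields $\int_{Q_T} M(t,x, f_n/C)\diff x\diff t \leq 1$, which by definition of the Luxemburg norm means $\|f_n\|_{L_M} \leq C$.

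For (2) and (3), I would check the defining axioms one by one. For $m^*$: vanishing at $0$ is immediate from $m \geq 0$; strict positivity for $s>0$ uses $\liminf_{x \to 0^+} m(x)/x = 0$ to find $x_0 > 0$ with $m(x_0) < (s/2) x_0$, so $sx_0 - m(x_0) > 0$; convexity of $m^*$ is automatic as a pointwise supremum of affine functions of $s$; superlinearity at $\infty$ follows because $m^*(s)/s \geq x - m(x)/s \to x$ for any fixed $x > 0$; and the $\liminf$ condition at $0$ requires using $\limsup_{x\to\infty} m(x)/x = +\infty$ to confine the supremum defining $m^*(s)$ to an interval $[0, A(s)]$ with $A(s) \to 0^+$ as $s \to 0^+$. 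For $M^*$: symmetry in $\eta$ follows from (N1) via the substitution $\xi \mapsto -\xi$ in the supremum; convexity in $\eta$ is automatic; Carath\'eodory measurability follows by rewriting the supremum as one over $\xi \in \mathbb{Q}^d$ (legitimate by continuity of $M(t,x,\cdot)$), reducing $M^*$ to a countable supremum of Carath\'eodory functions; and the Young growth in (N4) dualizes --- inequalities flipping under Legendre conjugation --- from $\overline{m}(|\xi|) \leq M(t,x,\xi) \leq m(|\xi|)$ to $m^*(|\eta|) \leq M^*(t,x,\eta) \leq \overline{m}^*(|\eta|)$, using that the sup of $\xi\cdot\eta - \overline{m}(|\xi|)$ is attained along $\xi \parallel \eta$ and thus reduces to the one-dimensional conjugate $\overline{m}^*(|\eta|)$; the Young-function property of the bounding functions $m^*$ and $\overline{m}^*$ is then exactly (2).

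The only step requiring genuine care is $\liminf_{s\to 0^+} m^*(s)/s = 0$ in (2), as the definition of a Young function in this paper states superlinearity via $\liminf/\limsup$ rather than the stronger $m'(0)=0$, $m'(\infty)=\infty$, so the classical shortcut via derivatives is unavailable and one must localize the Legendre supremum explicitly. Everything else is a routine consequence of convexity, the symmetry axiom (N1), and the definition of the Luxemburg norm.
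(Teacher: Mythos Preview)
The paper states this lemma without proof, treating the four items as standard background facts from the theory of Musielak--Orlicz spaces (with a global reference to \cite{chlebicka2021book}). Your proof is correct and supplies the missing details: the convexity arguments for (1) and (4) are exactly the right ones, and your verification of the Young/$N$-function axioms for $m^*$ and $M^*$ is sound. One minor remark on (N2) for $M^*$: a countable supremum of Carath\'eodory functions is automatically measurable in $(t,x)$ but only lower semicontinuous in $\eta$; full continuity in $\eta$ follows instead from the fact that $M^*(t,x,\cdot)$ is convex and finite-valued (by the upper bound $\overline{m}^*(|\eta|)$ you established), hence continuous on $\R^d$. Your observation that claim (1) upgrades the $\liminf/\limsup$ superlinearity conditions to genuine limits, and that this is needed to localize the Legendre supremum for $\liminf_{s\to 0^+} m^*(s)/s = 0$, is the one non-obvious step and you handled it correctly.
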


\begin{Def}[Musielak--Orlicz space $L_M(Q_T)$]
    Let $M$ be an $N$-function. Then, the Musielak--Orlicz space $L_M(Q_T)$ is defined as
    $$
    L_M(Q_T) := \left\{f:Q_T\rightarrow\R^d:\, \text{there exists }\lambda > 0\text{ such that }\int_{Q_T}M\left(t, x, \frac{f}{\lambda}\right)\diff x\diff t < +\infty \right\}.
    $$
\end{Def}
This is a Banach space when equipped with the norm
$$
\|f\|_{L_M(Q_T)} := \inf\left\{\lambda > 0:\, \int_{Q_T}M\left(t, x, \frac{f}{\lambda}\right) \diff x\diff t\leq 1\right\}.
$$
If $m$ is a Young function, then similarly we can define a Musielak--Orlicz space $L_m(Q_T)$ (or simply Orlicz in this case).

One can prove the so-called Fenchel--Young inequality and a form of H\"{o}lder's inequality in Musielak--Orlicz spaces.

\begin{lem}
    Let $M$ be an $N$-function and $M^*$ be its convex conjugate. Then, for every $f\in L_M(Q_T)$, $g\in L_{M^*}(Q_T)$ the following inequalities are true
    \begin{align*}
        &\int_{Q_T}f\,g\diff x\diff t \leq \int_{Q_T}M(t, x, f)\diff x\diff t + \int_{Q_T}M^*(t, x, g)\diff x\diff t ,\\
        &\int_{Q_T}f\,g\diff x\diff t\leq 2\|f\|_{L_M(Q_T)}\|g\|_{L_{M^*}(Q_T)}.
    \end{align*}
\end{lem}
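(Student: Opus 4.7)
The plan is to derive both inequalities from the pointwise Fenchel--Young inequality, which is immediate from the definition of the convex conjugate. For a.e. $(t,x)\in Q_T$ and all $\xi,\eta\in\R^d$, choosing $\zeta=\xi$ inside the supremum in $M^*(t,x,\eta)=\sup_{\zeta\in\R^d}(\zeta\cdot\eta-M(t,x,\zeta))$ gives
\[
\xi\cdot\eta \leq M(t,x,\xi)+M^*(t,x,\eta).
\]
Substituting $\xi=f(t,x)$ and $\eta=g(t,x)$, observing that all integrands are measurable by the Carath\'eodory property of $M$ and $M^*$, and integrating over $Q_T$ yields the first claimed inequality directly. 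The integrability of the right-hand side is guaranteed by $f\in L_M(Q_T)$ and $g\in L_{M^*}(Q_T)$, which in turn implies integrability of $f\,g$ on the left.

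For the second inequality, the plan is to normalize and reduce to the first. Fix any $\lambda_1>\|f\|_{L_M(Q_T)}$ and $\lambda_2>\|g\|_{L_{M^*}(Q_T)}$; by the definition of the Luxemburg norm,
\[
\int_{Q_T} M\bigl(t,x,f/\lambda_1\bigr)\diff x\diff t \leq 1,\qquad \int_{Q_T} M^*\bigl(t,x,g/\lambda_2\bigr)\diff x\diff t \leq 1.
\]
Applying the first (Fenchel--Young) inequality to the pair $(f/\lambda_1,\,g/\lambda_2)$ and integrating then yields
\[
\frac{1}{\lambda_1\lambda_2}\int_{Q_T} f\,g\diff x\diff t \leq 2,
\]
so that $\int_{Q_T} f\,g\diff x\diff t\leq 2\lambda_1\lambda_2$. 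Passing to the infimum over admissible $\lambda_1,\lambda_2$ gives the Hölder-type inequality with constant $2$.

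Since both steps are elementary consequences of the definition of $M^*$ and of the Luxemburg norm, there is no genuine obstacle in this argument; the entire proof is a pointwise convex-conjugate inequality followed by integration and a rescaling. The only minor book-keeping is to justify that the normalization integrals indeed do not exceed $1$ for any $\lambda_i$ strictly above the norm, which is immediate from the definition, and to check measurability of the compositions, which follows from the Carath\'eodory property of $M$ and $M^*$.
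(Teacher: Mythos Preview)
The paper states this lemma without proof, treating it as a standard fact from the theory of Musielak--Orlicz spaces (cf.\ the reference \cite{chlebicka2021book} in the paper). Your argument is exactly the standard one and is correct.

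One minor remark: your sentence ``The integrability of the right-hand side is guaranteed by $f\in L_M(Q_T)$ and $g\in L_{M^*}(Q_T)$'' is not quite accurate, since membership in $L_M$ only gives $\int_{Q_T} M(t,x,f/\lambda)\diff x\diff t<\infty$ for \emph{some} $\lambda>0$, not necessarily for $\lambda=1$. The first inequality is still valid in the extended sense (the right-hand side may be $+\infty$), and the genuine integrability of $f\,g$ is what you actually establish via the second (H\"older-type) inequality. This is a cosmetic slip and does not affect the argument.
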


\subsection{Notions of convergence in Musielak--Orlicz spaces}

Obviously one can define convergence in norm, in Musielak--Orlicz spaces. The other useful notion is the so-called modular convergence.

\begin{Def}[Modular convergence in $L_M(Q_T)$]
    Given a sequence of functions $\{f_n\}_{n\in\N}\subset L_M(Q_T)$, we say that it converges to $f$ modularly, denoted by $\xrightarrow{M}$, if there exists $\lambda > 0$, such that
    $$
    \int_{Q_T}M\left(t, x, \frac{f_n - f}{\lambda}\right)\diff x\diff t\rightarrow 0,\text{ as }n\to +\infty.
    $$
\end{Def}

By convexity it follows, that if $\{f_n\}_{n\in\N}\subset L_M(Q_T)$ and $f_n \xrightarrow{M} f$ modularly in $L_M(Q_T)$, then $f\in L_M(Q_T)$. Due to the Vitali convergence theorem, we obtain the following characterization of the modular convergence.

\begin{prop}\textup{(\cite[Theorem 3.4.4]{chlebicka2021book})}\label{prop:vitali_in_musielaki}
    Let $\{f_n\}_{n\in\N}\subset L_M(Q_T)$ and $f\in L_{M}(Q_T)$. Then, $f_n \xrightarrow{M} f$ modularly in $L_M(Q_T)$ if, and only if
    \begin{enumerate}[label = (V\arabic*)]
        \item $f_n\rightarrow f$ in measure,
        \item there exists $\lambda > 0$, such that $\left\{M\left(t, x, \frac{f_n}{\lambda}\right)\right\}_{n\in\N}$ is uniformly equiintegrable.
    \end{enumerate}
\end{prop}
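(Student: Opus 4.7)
The plan is to apply the Vitali convergence theorem in both directions, with the main technical wrinkle being the reconciliation of the different scaling parameters $\lambda$ that appear in the definitions of modular convergence and of Musielak--Orlicz membership. Everything hinges on the fact that convexity together with $M(t,x,0)=0$ (a consequence of (N4)) implies the ray-monotonicity $M(t,x,\alpha\xi)\leq M(t,x,\xi)$ for $\alpha\in[0,1]$, which lets me absorb different scalings into a common one.

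For the forward direction, assume $\lambda > 0$ is such that $\int_{Q_T} M(t, x, (f_n - f)/\lambda) \, \diff x \, \diff t \to 0$. I would obtain (V1) directly from Chebyshev and the lower bound in (N4):
$$\overline{m}\!\left(\frac{\varepsilon}{\lambda}\right) \cdot \bigl|\{|f_n - f| \geq \varepsilon\}\bigr| \leq \int_{Q_T} \overline{m}\!\left(\frac{|f_n-f|}{\lambda}\right) \diff x\, \diff t \leq \int_{Q_T} M\!\left(t, x, \frac{f_n - f}{\lambda}\right) \diff x \, \diff t \longrightarrow 0,$$
with $\overline{m}(\varepsilon/\lambda) > 0$ since $\overline{m}$ is a strictly increasing Young function. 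For (V2), since $f \in L_M(Q_T)$, I would pick $\lambda_0 > 0$ with $\int_{Q_T} M(t, x, f/\lambda_0) \, \diff x \, \diff t < \infty$, set $\lambda' := 2\max(\lambda, \lambda_0)$, and decompose $f_n/\lambda' = \tfrac12((f_n-f)/\max(\lambda,\lambda_0)) + \tfrac12(f/\max(\lambda,\lambda_0))$. Convexity and ray-monotonicity then yield
$$M\!\left(t, x, \frac{f_n}{\lambda'}\right) \leq \tfrac{1}{2} M\!\left(t, x, \frac{f_n - f}{\lambda}\right) + \tfrac{1}{2} M\!\left(t, x, \frac{f}{\lambda_0}\right).$$
The right-hand side is dominated by the sum of a sequence converging to zero in $L^1(Q_T)$ and a fixed integrable function, so it is uniformly equiintegrable, and hence so is the family $\{M(t, x, f_n/\lambda')\}_{n \in \N}$.

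For the reverse direction, assume (V1) and (V2) with the $\lambda$ from (V2). Keeping the same $\lambda_0$ and $\lambda' := 2\max(\lambda, \lambda_0)$, the symmetric convexity estimate (invoking (N1) to turn $-f$ into $f$) gives
$$M\!\left(t, x, \frac{f_n - f}{\lambda'}\right) \leq \tfrac{1}{2} M\!\left(t, x, \frac{f_n}{\lambda}\right) + \tfrac{1}{2} M\!\left(t, x, \frac{f}{\lambda_0}\right),$$
so the left-hand side is a uniformly equiintegrable family. The Carath\'eodory continuity of $M$ together with $M(t, x, 0) = 0$, combined with $f_n \to f$ in measure, yields $M(t, x, (f_n - f)/\lambda') \to 0$ in measure. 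Passing to an a.e.\ convergent subsequence of any given subsequence and applying Vitali's theorem, I conclude $\int_{Q_T} M(t, x, (f_n - f)/\lambda') \, \diff x \, \diff t \to 0$, which is modular convergence with parameter $\lambda'$.

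The only delicate point, and the one that I expect to consume most of the bookkeeping, is the parameter juggling: the scale witnessing modular convergence of $f_n - f$, the scale witnessing uniform equiintegrability of $\{M(t,x,f_n/\cdot)\}_{n\in\N}$, and the scale coming from $f \in L_M(Q_T)$ are a priori distinct, and it is ray-monotonicity of $M$ that lets me fold all three into a single rescaling $\lambda' = 2\max(\lambda,\lambda_0)$ on each side of the equivalence. Once this is done, Vitali's theorem handles the rest in a standard way.
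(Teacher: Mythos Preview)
Your proof is correct. The paper does not provide its own proof of this proposition; it is stated with a citation to \cite[Theorem 3.4.4]{chlebicka2021book} and used as a black box, so there is no in-paper argument to compare against.
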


Due to the above, we obtain the following corollary, which gives us the ability to converge in products, even without the information on the strong convergence of the sequences.

\begin{prop}\textup{(\cite[Corollary 1.11]{bulicek2021parabolic})}\label{prop:conv_of_product_in_musielaki}
    Suppose $\{f_n\}_{n\in\N} \subset L_M(Q_T)$ and $\{g_n\}_{n\in\N}\subset L_{M^*}(Q_T)$ be such that $f_n\xrightarrow{M} f$ modularly in $L_M(Q_T)$ and $g_n\xrightarrow{M^*} g$ modularly in $L_{M^*}(Q_T)$. Then, $f_n\, g_n\rightarrow f\, g$ strongly in $L^1(Q_T)$.
\end{prop}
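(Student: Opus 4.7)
The plan is to reduce the claim to Vitali's convergence theorem in $L^1(Q_T)$: I will show that $f_n\cdot g_n\to f\cdot g$ in measure and that the sequence $\{f_n\cdot g_n\}_{n\in\N}$ is uniformly integrable in $L^1(Q_T)$, from which $L^1$-convergence follows. Note that $f\cdot g\in L^1(Q_T)$ is guaranteed by the H\"older inequality in Musielak--Orlicz spaces together with $f\in L_M(Q_T)$ and $g\in L_{M^*}(Q_T)$.

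By Proposition~\ref{prop:vitali_in_musielaki} applied to both sequences, the modular convergence of $f_n$ and $g_n$ implies in particular that $f_n\to f$ and $g_n\to g$ in measure on $Q_T$. Since the map $(\xi,\eta)\mapsto \xi\cdot\eta$ is continuous on $\R^d\times\R^d$, convergence in measure passes to the product, so $f_n\cdot g_n\to f\cdot g$ in measure on $Q_T$. This takes care of the first hypothesis (V1) required for Vitali.

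For the uniform integrability of $\{f_n\cdot g_n\}_{n\in\N}$, I would use the Fenchel--Young inequality in Musielak--Orlicz form. Pick $\lambda_1,\lambda_2>0$ such that both sequences $\{M(t,x,f_n/\lambda_1)\}_{n\in\N}$ and $\{M^*(t,x,g_n/\lambda_2)\}_{n\in\N}$ are uniformly equiintegrable (the existence of such $\lambda_1,\lambda_2$ is exactly the content of (V2) for each sequence), and set $\lambda=\max(\lambda_1,\lambda_2)$; by monotonicity of $s\mapsto M(t,x,s\,e)/s$ the two families remain equiintegrable after enlarging the scaling constant. Then Fenchel--Young applied to $f_n/\lambda$ and $g_n/\lambda$ gives, for a.e.\ $(t,x)\in Q_T$,
\begin{equation*}
\frac{f_n\cdot g_n}{\lambda^2}=\frac{f_n}{\lambda}\cdot\frac{g_n}{\lambda}\leq M\!\left(t,x,\frac{f_n}{\lambda}\right)+M^*\!\left(t,x,\frac{g_n}{\lambda}\right),
\end{equation*}
and, using the symmetry $M(t,x,\xi)=M(t,x,-\xi)$ from (N1) applied to $-f_n$,
\begin{equation*}
-\frac{f_n\cdot g_n}{\lambda^2}=\frac{-f_n}{\lambda}\cdot\frac{g_n}{\lambda}\leq M\!\left(t,x,\frac{f_n}{\lambda}\right)+M^*\!\left(t,x,\frac{g_n}{\lambda}\right),
\end{equation*}
so that $|f_n\cdot g_n|$ is dominated pointwise by the equiintegrable sequence $\lambda^2\bigl[M(t,x,f_n/\lambda)+M^*(t,x,g_n/\lambda)\bigr]$, which proves the required uniform integrability.

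The main (mild) obstacle I foresee is choosing a single scaling parameter $\lambda$ so that Fenchel--Young can be applied in a way compatible with both sequences simultaneously; once this is done, everything else is a routine invocation of Vitali's theorem in $L^1(Q_T)$, combining the convergence in measure with the pointwise $M$-plus-$M^*$ domination to conclude $f_n\cdot g_n\to f\cdot g$ strongly in $L^1(Q_T)$.
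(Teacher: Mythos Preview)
Your proof is correct and follows exactly the route the paper indicates: the statement is presented as a direct corollary of Proposition~\ref{prop:vitali_in_musielaki} (the Vitali characterization of modular convergence), and you carry this out precisely via convergence in measure plus the Fenchel--Young domination $|f_n\cdot g_n|\le \lambda^2\bigl[M(\cdot,f_n/\lambda)+M^*(\cdot,g_n/\lambda)\bigr]$ to obtain uniform integrability. The only cosmetic remark is that enlarging the scaling parameter to a common $\lambda$ is most cleanly justified by convexity, $M(t,x,\alpha\xi)\le\alpha M(t,x,\xi)$ for $\alpha\in[0,1]$, rather than by the monotonicity of $s\mapsto M(t,x,se)/s$; either way the conclusion is the same.
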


Moving forward, we want to define weak* convergence in Musielak--Orlicz spaces, the basic intuition would be that $(L_M(Q_T))^* = L_{M^*}(Q_T)$, but unfortunately, this is not always true. Therefore, we need to define a particular subspace.

\begin{Def}
    Let $M$ be an $N$-function. We define a space $E_M(Q_T)$ as
    $$
    E_M(Q_T) := \left\{f:Q_T\rightarrow\R^d:\, \text{for every }\lambda > 0\text{ we have }\int_{Q_T}M\left(t, x, \frac{f}{\lambda}\right)\diff x\diff t < +\infty \right\}.
    $$
\end{Def}
Similarly if $m$ is a Young function, then we can define a function space $E_m(Q_T)$.

With the above definition in mind, we can state the following result on the weak* convergence.

\begin{lem}[\cite{chlebicka2021book}]
Let $M$ be an $N$-function. Then, $E_M(Q_T)$ is separable and $(E_M(Q_T))^* = L_{M^*}(Q_T)$. In particular, any bounded sequence in $L_{M^*}(Q_T)$ has a weak* convergent subsequence.
\end{lem}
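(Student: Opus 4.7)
The plan is to establish the three assertions in order: separability of $E_M(Q_T)$, the duality $(E_M(Q_T))^* \cong L_{M^*}(Q_T)$, and finally the weak* sequential compactness of the unit ball of $L_{M^*}(Q_T)$, which follows by combining the first two with the Banach--Alaoglu theorem.

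For separability, my plan is to show that the simple functions with rational coefficients, supported on a countable generating algebra of Borel sets in $Q_T$, are dense in $E_M(Q_T)$ in the Luxemburg norm. The key step is that bounded functions are dense in $E_M(Q_T)$: for $f\in E_M(Q_T)$, the truncations $f_k:=f\mathbbm{1}_{\{|f|\le k\}}$ satisfy $M(t,x,(f-f_k)/\lambda)\to 0$ pointwise a.e., and are dominated by $M(t,x,f/\lambda)$, which is integrable for every $\lambda>0$ by definition of $E_M$. The dominated convergence theorem applied to the modular then gives $\|f-f_k\|_{L_M}\to 0$. Bounded measurable functions on the finite-measure set $Q_T$ are in turn approximated in $L^\infty$, and hence in $L_M$ via the upper bound $M(t,x,\xi)\le m(|\xi|)$, by simple functions built from a fixed countable algebra with rational coefficients.

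For the duality, the inclusion $L_{M^*}(Q_T)\hookrightarrow (E_M(Q_T))^*$ is immediate from the H\"older-type inequality of the previous lemma. For the converse, given $\Lambda\in(E_M(Q_T))^*$ I would set $\nu(A):=\Lambda(\mathbbm{1}_A)$ for measurable $A\subset Q_T$. Countable additivity of $\nu$ follows from continuity of $\Lambda$ together with $\|\mathbbm{1}_{\bigsqcup_{n>N}A_n}\|_{L_M}\to 0$ (again using the $E_M$ property for $\mathbbm{1}_{Q_T}$), and absolute continuity with respect to Lebesgue measure is clear. Radon--Nikodym then yields $g$ with $\Lambda(\mathbbm{1}_A)=\int_A g\diff x\diff t$. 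Extending by linearity to simple functions and then by density to all of $E_M(Q_T)$, the functional is represented by $g$.

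The main obstacle is showing $g\in L_{M^*}(Q_T)$, because merely being locally integrable is not enough. My plan is to pick a good test function from the subdifferential of $M^*$: for a truncation $g_N:=g\mathbbm{1}_{\{|g|\le N\}}$ and $\lambda>0$, choose $\xi=\xi(t,x)$ attaining the supremum in $M^*(t,x,g_N/\lambda)$, so that by the Fenchel--Young identity
\[
\int_{Q_T}M^*\!\left(t,x,\tfrac{g_N}{\lambda}\right)\diff x\diff t
=\int_{Q_T}\tfrac{g_N}{\lambda}\cdot\xi\diff x\diff t-\int_{Q_T}M(t,x,\xi)\diff x\diff t.
\]
Since $\xi$ is bounded (after a further truncation in $\xi$), the function $\xi$ lies in $E_M(Q_T)$, and the first term equals $\tfrac{1}{\lambda}\Lambda(\xi\mathbbm{1}_{\{|g|\le N\}})\le\tfrac{1}{\lambda}\|\Lambda\|\,\|\xi\|_{L_M}$. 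Choosing $\lambda$ large enough so that the modular of $\xi$ is controlled by $1$, one obtains a uniform bound in $N$ on the modular of $g_N/\lambda$, so Fatou's lemma yields $g\in L_{M^*}(Q_T)$ with $\|g\|_{L_{M^*}}\lesssim\|\Lambda\|$.

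Finally, once $E_M(Q_T)$ is separable and its dual is $L_{M^*}(Q_T)$, the Banach--Alaoglu theorem gives weak* compactness of closed balls in $L_{M^*}(Q_T)$, and separability of the predual upgrades compactness to sequential compactness, which is the announced conclusion.
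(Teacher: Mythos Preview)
The paper does not prove this lemma; it is stated with a citation to \cite{chlebicka2021book} and used as a black box. Your sketch follows the classical route (density of bounded functions in $E_M$ via dominated convergence on the modular, then Radon--Nikod\'ym representation of functionals, then Banach--Alaoglu plus separability of the predual), which is exactly how the result is proved in the cited reference and in the standard Orlicz-space literature.

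Two small points worth tightening. First, the spaces here are $\R^d$-valued (see the definition of $L_M(Q_T)$ in Section~\ref{sec:musielaki}), so the measure $\nu(A)=\Lambda(\mathbf{1}_A)$ should be interpreted componentwise, or you should run the Radon--Nikod\'ym argument coordinate by coordinate. Second, the step ``$\xi$ is bounded (after a further truncation in $\xi$)'' is the one place where the argument is loose: truncating $\xi$ destroys the Fenchel--Young \emph{equality}, so you cannot directly keep the identity you wrote. The clean fix is to observe that on the set $\{|\xi|\le K\}$ the equality still holds with $\xi_K:=\xi\mathbf{1}_{\{|\xi|\le K\}}$, bound $\int_{\{|\xi|\le K\}} M^*(t,x,g_N/\lambda)$ uniformly in $K$ via $\Lambda(\xi_K\mathbf{1}_{\{|g|\le N\}})$ and the modular of $\xi_K$, and then send $K\to\infty$ by monotone convergence. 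Alternatively (and slightly more directly), bound the Orlicz norm $\|g_N\|_{(M^*)}=\sup\{\int f\cdot g_N:\int M(f)\le 1\}$ by $\|\Lambda\|$ using only bounded test functions $f$ (which lie in $E_M$), then pass to general $f$ by truncation and dominated convergence since $g_N$ is bounded; Fatou on the modular then gives $g\in L_{M^*}$. Either way the strategy is correct and no new idea is needed.
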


\section{Notation, assumptions, and the main result}\label{sec:preliminaries}
We recall that $(\Omega, \mathcal{F}, P)$ is a probability space with a given $\sigma$-field $\mathcal{F}$, a probability measure $P$ defined on it, and a normal filtration $(\mathcal{F}_t)_{t\geq 0}$, $(0, T)$ is a time interval, and $D\subset\R^d$ is a given Lipschitz domain of arbitrary space dimension $d\in\mathbb{N}$. Throughout the article, the triple $(\omega, t, x)$ constitutes of an elementary event, a time, and a space variable. Recall that we have already defined $Q_t := (0, t)\times D$. For any two vectors $a,b\in\R^d$, we denote by $a\cdot b$ their standard vector product, i.e. $a\cdot b = \sum_{i = 1}^d a_i\,b_i$. We employ the standard notation for Lebesgue ($L^p$) and Sobolev ($W^{1,p}$ or $W^{1,p}_0$) spaces, and $\mathcal{D}'(D)$ denotes the space of distributions on $D$. Note that the Musielak--Orlicz spaces $L_M$ are defined in the preceding section. We frequently do not distinguish between vector- and scalar- valued functions. The expected value of a random variable will be denoted by $\mathbb{E}$.

The following basic definitions for a stochastic integral one can find in \cite{Liu2015Stochastic}. For a separable Banach space $E$ we define the Banach space of $E-$valued square integrable martingales $\mathcal{M}_T^2(E)$ with a norm
$$
\|\mathbb{M}\|_{\mathcal{M}_T^2} = \sup_{t\in [0, T]}(\mathbb{E}\|\mathbb{M}(t)\|^2_E)^{1/2} =  (\mathbb{E}\|\mathbb{M}(T)\|^2_E)^{1/2}.
$$
For any two separable Hilbert spaces $V$, $H$, let $\mathcal{L}(V; H)$ denote the space of bounded, linear functionals from $V$, taking values in $H$. Then, we define the space of Hilbert--Schimdt operators as
$$
HS(V; H) := \left\{T\in \mathcal{L}(V; H):\quad \sum_{j\in\N}\|T\mathfrak{e}_j\|_{H}^2 < +\infty, \ (\mathfrak{e}_j)_{j\in\N}\text{ is an orthonormal basis of }V\right\}. 
$$
Note that $HS(V; H)$ is a separable Hilbert space with the norm
$$
\|T\|_{HS}^2 = \sum_{j\in\N}\|T \mathfrak{e}_j\|_{H}^2.
$$
In the following, we will use the notation $HS(H):=HS(H,H)$.
Furthermore, we define by It\^{o}'s isometry \cite[Proposition 2.3.5]{Liu2015Stochastic} the norm
$$
\|\Phi\|_T^2 := \mathbb{E}\int_0^T \|\Phi\|_{HS}^2\diff t = \left\|\int_0^\cdot \Phi\diff W(t)\right\|_{\mathcal{M}_T^2},
$$
where $\Phi:[0,T] \times \Omega \rightarrow HS(L^2(D); H)$, and as such, for any separable Hilbert space $H$ we can define a space of It\^{o} integrable mappings with respect to the $Q$-Wiener process $(W(t))_{t\geq 0}$ 
$$
\mathcal{N}^2_W(0, T; H) = \left\{\Phi:[0, T]\times\Omega\rightarrow HS(L^2(D); H):\, \Phi\text{ is predictable and }\|\Phi\|_{T} < +\infty\right\}. 
$$
With this, for any $\Phi\in \mathcal{N}^2_W(0, T; H)$, and measurable $\xi : \Omega\times (0, T) \rightarrow H$ such that
$$
\int_0^T \|\Phi(t,\omega)^*\xi(t,\omega)\|^2_{L^2(D)}\diff t,\text{ almost surely,}
$$
we define
$$
\int_0^T\langle \xi,\,\Phi(t)\diff W(t)\rangle_H := \int_0^T\tilde{\Phi}_\xi(t)\diff W(t),
$$
for
$$
\tilde{\Phi}_\xi(t)(f) = \langle\xi,\,\Phi(t)f\rangle_H,\quad f\in L^2(D).
$$
Moving further, we define truncation at the level $k\in\N$ by
\begin{align}\label{def:trunc}
    T_k(z) := \left\{\begin{array}{ll}
         z\quad\text{ when }|z|\leq k ,  \\
         \mathrm{sgn}(z)k\quad\text{ otherwise.} 
    \end{array}
    \right.
\end{align}
And its primitive function
\begin{align}\label{def:primitive_trunc}
    G_k(z) = \int_0^z T_k(s)\diff s.
\end{align}
Moreover, let 
\begin{align}\label{def:trunc_smooth}
T_{k,\delta}(z):= \left\{\begin{array}{ll}
     z \quad\text{ when }|z|\leq k, \\
     \mathrm{sign}(z)(k + \delta/2)\quad\text{ for }|z|\geq k + \delta,
     & 
\end{array}
\right.
\end{align}
whereas $T_{k,\delta}$ is defined on $(k, k+\delta)$ in such a way that $T_{k,\delta} \in C^2(\R)$, $0\leq T'_{k,\delta}\leq 1$, $T_{k,\delta}$ is concave on $\R_+$ and convex on $\R_-$ with its second derivative satisfying $|T''_{k,\delta}|\leq C\delta^{-1}$. With this, we define $G_{k,\delta}$ as a primitive function of $T_{k,\delta}$ similarly as in \eqref{def:primitive_trunc}.

\subsection{Assumptions on $A$ and the $N$-function $M$}

We start with the assumption on the $N$-function $M$, and the operator $A$. We remark here, that Assumption \ref{ass:N_func_M} is quite hidden in our work, and not used explicitly, but very much necessary. It gives good approximation behaviour in $L_M(Q_T)$ space, which can be seen in Proposition \ref{prop:double_mollifier_conv}. When compared to earlier works (cf. \cite{chlebicka2019parabolic, chlebicka2021book}) one can notice that we have a lower exponent in the third variable of the function $\Theta$, that is $\Theta(t, \delta, C\delta^{-1})$, instead of $\Theta(t, \delta, C\delta^{-d})$. This is because of the observation made in \cite{bulicek2021parabolic}, which is the fact that one only has to have a good approximation of the functions of the form
$$
\nabla_x (T_k(u) + \phi),\quad \nabla_x u\in L_M(Q_T),\, \phi\in C^\infty_c(Q_T),
$$
see Proposition \ref{prop:double_mollifier_conv}. At last, we would like to also mention, that Assumption \ref{ass:N_func_M} in the less general variable exponent spaces $L^{p(t, x)}(Q_T)$ implies the log-H\"{o}lder continuity (in space) of the exponent $p$, which can be seen in the works such as \cite{bulicek2023nonnewtonian, bulicek2025parabolicnonstandardgrowth}. In our general case, the continuity in space of the function $M$ is not necessarily implied. Indeed, it is quite easy to see that Assumption \ref{ass:N_func_M} implies the continuity (in space) of M iff
$$
\lim_{\varepsilon\to 0^+}\Theta(t, \varepsilon, |\xi|) = 1.
$$

\begin{Ass}[Assumption on the $N$-function $M$]\label{ass:N_func_M}
    We assume that $M$ is isotropic. Moreover, there exists a function $\Theta: (0, T)\times [0, 1]\times [0, +\infty)\rightarrow [0, +\infty)$, which is nondecreasing with respect to the second and third variable, such that
    $$
        \limsup_{\delta\to 0^+}\Theta(t, \delta, C\delta^{-1}) \text{ is bounded uniformly in time }t\in(0, T),
    $$
    and
    $$
        \frac{M(t, x, r)}{M(t, y, r)}\leq \Theta(t, |x - y|, r).
    $$
\end{Ass}
\begin{Ass}[Assumption on growth of the Young function]\label{ass:growth_Young}
    There exists a Young function $m$, constant $B > 0$ and $\beta\in (0, 1)$ such that
    $$
    M(t, x, \xi)\leq m(|\xi|),
    $$
    and
    $$
    \lim_{x\to +\infty}m(x)e^{-B x^{1 + \beta}} < +\infty.
    $$
\end{Ass}

As mentioned in the introduction, the following constraint on the growth of $M$ is connected to good behaviour of the stochastic noise $\int_0^t h\diff W(s)$, see Lemma \ref{lem:stoch_integral_in_musielak_space}.

\begin{Ass}[Assumptions on $A$]\label{ass:stress_A}
    We assume that $A:Q_T\times \R^d\rightarrow \R^d$ satisfies
    \begin{enumerate}[label = (A\arabic*)]
        \item $A(t, x, \xi)$ is a Carath\'{e}odory function, that is for a.e. $(t, x)\in (0, T)\times D$ the mapping $\R^d\ni \xi\mapsto A(t, x, \xi)\in \R$ is a continuous function, and for every $\xi\in \R^d$ the mapping $(0, T)\times D\ni (t, x)\mapsto A(t, x, \xi)\in \R$ is measurable.
        \item (coercivity and growth bound) there exists a constant $c > 0$ and a function $g\in L^\infty(Q_T)$ such that for all $\xi\in \R^d$ and almost every $(t, x)\in Q_T$
        $$
            M(t, x, \xi) + M^*(t, x, A(t, x, \xi)) \leq cA(t, x, \xi)\cdot\xi + g(t, x).
        $$\label{ass:on_A_2}
        \item (strict monotonicity) for every $\xi_1\neq\xi_2\in\R^d$ and a.e. $(t, x)\in Q_T$ we have
        $$
            (A(t, x, \xi_1) - A(t, x, \xi_2))\cdot(\xi_1 - \xi_2) > 0.
        $$\label{ass:on_A_3}
        \item for a.e. $(t, x)\in Q_T$ $A(t, x, 0) = 0$.
    \end{enumerate}
\end{Ass}

\begin{prop}\textup{(\cite[Lemma 1.22]{bulicek2021parabolic})}\label{prop:stress_bound_for_bounded}
    Let $A$ satisfy Assumption \ref{ass:stress_A}. Then, for every $K>0$, there exists a constant $C(K) > 0$ depending on $K$, such that $|A(t, x, \xi)|\leq C(K)$ for a.e. $(t, x)\in Q_T$ and all $\xi\in \R^d$ satisfying $|\xi|\leq K$.
\end{prop}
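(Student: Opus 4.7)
My strategy is to extract, from the growth bound in \ref{ass:on_A_2}, a scalar inequality of the form $m^*\bigl(|A(t,x,\xi)|\bigr) \leq cK|A(t,x,\xi)| + \|g\|_{L^\infty(Q_T)}$, and then to invoke the superlinearity of the Young function $m^*$ to force $|A|$ to be bounded. The key preliminary observation is that property \ref{def:N_func_Young_growth} supplies a Young function $m$ dominating $M$ from above, and this domination is reversed by the Legendre transform: for a.e.\ $(t,x)\in Q_T$ and every $\eta\in\R^d$,
$$
M^*(t,x,\eta) = \sup_{\xi\in\R^d}\bigl(\xi\cdot\eta - M(t,x,\xi)\bigr) \geq \sup_{r\geq 0}\bigl(r|\eta| - m(r)\bigr) = m^*(|\eta|),
$$
where the middle inequality comes from restricting the supremum to $\xi$ parallel to $\eta$.

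Fixing now $\xi\in\R^d$ with $|\xi|\leq K$ and working a.e.\ in $Q_T$, I would drop the nonnegative term $M(t,x,\xi)$ on the left-hand side of the estimate in \ref{ass:on_A_2}, use the conjugate inequality above, and bound the inner product on the right-hand side by Cauchy--Schwarz, obtaining
$$
m^*\bigl(|A(t,x,\xi)|\bigr) \leq M^*\bigl(t,x,A(t,x,\xi)\bigr) \leq c\,A(t,x,\xi)\cdot\xi + g(t,x) \leq cK\,|A(t,x,\xi)| + \|g\|_{L^\infty(Q_T)}.
$$
Since $m^*$ is itself a Young function, the basic property that $m^*(s)/s$ is nondecreasing combined with (Y3) forces $m^*(s)/s \to +\infty$ as $s\to+\infty$. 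Consequently the set $\{s\geq 0:\ m^*(s)\leq cKs + \|g\|_{L^\infty(Q_T)}\}$ is bounded by some constant $C(K)>0$ depending only on $K$, $c$, $\|g\|_{L^\infty(Q_T)}$ and $m^*$, and the previous display then yields $|A(t,x,\xi)|\leq C(K)$ for a.e.\ $(t,x)\in Q_T$.

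There is essentially no obstacle here; the only subtleties are keeping track of the correct direction of the conjugate inequality after the comparison $M\leq m$, and noting that the nonnegative $M$-term on the left of \ref{ass:on_A_2} can be harmlessly discarded. Both points are standard features of the Young-function formalism and do not rely on any structural assumption beyond (A2) and (N4).
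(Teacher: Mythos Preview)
Your argument is correct and is the standard way this bound is established. The paper does not give its own proof of this proposition; it simply cites \cite[Lemma~1.22]{bulicek2021parabolic}, and your approach is precisely the one used there.
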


At last, we come to the assumption on the stochastic noise $h$, which are important whenever we consider a multiplicative case.
\begin{Ass}[Assumptions on $h$]\label{ass:func_h_multiplicative} We assume that $h: \Omega\times (0, T)\times L^2(D)\rightarrow HS(L^2(D))$
    $$
        h(\omega, t, u)(e_j) = \{x\mapsto h_j(\omega, t, x, u(x))\},
    $$
    and
    \begin{enumerate}[label = (H\arabic*)]
        \item For any $j\in \N$, $h_j: \Omega\times (0, T)\times D\times \R\rightarrow \R$ is a Carath\'{e}odory function, such that for all $\lambda\in \R$, $(\omega,t)\mapsto h_j(\omega,t,\cdot,\lambda)$ is predictable with values in $L^2(D)$,  %\mathcal{N}_W^2(0, T; L^2(D))$, 
        and $\lambda\mapsto h_j(\omega, t,x, \lambda)$ is continuous $dP\otimes dt\otimes dx$-almost everywhere. %($P_T$ is a standard predictable $\sigma$-algebra (cf. \cite{Liu2015Stochastic} page 33) and $\mathcal{L}^{d+1}$ is a Lebesgue measure).
        \item There exist $C_1, C_2 > 0$ and $C_3\in L^1(D)$ such that for a.e. $(\omega, t,x)$ and all $\lambda \in \R$ we have
        $$
        \sum_{j\in\N}|h_j(\cdot,\lambda)|^2 \leq C_1|\lambda|^2 + C_3(x),
        $$
        and
        \begin{equation*}
        \sum_{j\in\N}|h_j(\cdot, \lambda_1) - h_j(\cdot, \lambda_2)|^2\leq C_2|\lambda_1 - \lambda_2|^2 \quad \mbox{ for all }\lambda_1,\ \lambda_2  \in \mathbb{R}.
        \end{equation*} \label{ass:on_h_2}
    \end{enumerate}
\end{Ass}

\subsection{Main result}

We start our discussion of the main result with a definition of the solution to the problem \eqref{eq:main_sys}.

\begin{Def}\label{def:solution_to_main_system}
    We will say that $u\in \mathcal{C}([0, T]; L^2(\Omega\times D))\cap L^1(\Omega\times(0, T); W^{1,1}_0(D))$ with $\nabla_x u\in L_M(\Omega\times Q_T)$ is a solution to the problem \eqref{eq:main_sys} iff for all $t\in [0, T]$, a.s. in $\Omega$,
    \begin{align}\label{eq:weak_form_final}
        u(t) - u_0 - \int_0^t \DIV_x A(\tau, x, \nabla_x u)\diff\tau = \int_0^t h(u)\diff W(\tau)\quad\text{ in }\mathcal{D}'(D).
    \end{align}
    	%\centerline{}
    	%A posteriori, since the r.h.s and $u(t)$ is in $L^2(D)$ for almost every $t\in (0,T)$, there exists a negligible set $N_t\in \mathcal{F}$ such that the equation holds in $L^2(D)$ for all $\omega\in \Omega\setminus N_t$. If $u$ is continuous with values in $L^2(D)$, the negligible set may be chosen indepenently of $t\in [0,T]$. 
\end{Def}

Finally we may state our main result.

\begin{thm}\label{mainthm}
    Suppose $u_0\in L^2(\Omega;L^2(D))$ is $\mathcal{F}_0$-measurable. Let $M$ be an $N$-function satisfying the Assumption \ref{ass:N_func_M}, with its upper bound Young function $m$ satisfying the Assumption \ref{ass:growth_Young}. Moreover, let $A$ be an operator respecting the Assumption \ref{ass:stress_A}, and the stochastic noise $h$ satisfies Assumption \ref{ass:func_h_multiplicative}. Then, there exists a solution $u$, in the sense of the Definition \ref{def:solution_to_main_system} to the system \eqref{eq:main_sys}, satisfying the energy equality
    \begin{equation}\label{eq:final_energy_equality}
    \begin{split}
        &\frac{1}{2}\mathbb{E}\|u(t)\|_{L^2(D)}^2 - \frac{1}{2}\mathbb{E}\|u_0\|_{L^2(D)}^2\\
        &\qquad = -\mathbb{E}\int_0^t\int_D A(s, x, \nabla_x u)\cdot \nabla_x u\diff x\diff s + \mathbb{E}\int_0^t\|h\|_{HS(L^2(D))}^2\diff s.
    \end{split}
    \end{equation}
\end{thm}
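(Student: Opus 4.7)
The plan is to prove Theorem \ref{mainthm} in the two stages indicated by Sections \ref{sec:additive} and \ref{sec:multiplicative}. First I would treat the \emph{additive} case, in which $h(u)$ is replaced by a given predictable $h\in\mathcal{N}^2_W(0,T;L^2(D))$, and then bootstrap to the multiplicative setting via an iteration scheme that exploits the Lipschitz bound in \ref{ass:on_h_2}.

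\textbf{Additive case and a priori estimates.} I would introduce a double approximation: replace $A$ by a regularization $A^\varepsilon$ of polynomial growth (for instance by adding $\varepsilon |\xi|^{p-2}\xi$ for a suitably large $p$), so that the corresponding operator acts between $L^p(0,T;W^{1,p}_0(D))$ and its dual. At this level a Galerkin scheme, together with the standard theory of monotone stochastic PDEs (as in \cite{Krylov2007stochastic}, \cite{Liu2015Stochastic}), yields an approximate solution $u^\varepsilon$. Applying It\^o's formula to $\tfrac12\|u^\varepsilon\|_{L^2}^2$ and invoking the coercivity in \ref{ass:on_A_2} gives, after a Burkholder--Davis--Gundy estimate, uniform bounds on $u^\varepsilon$ in $L^2(\Omega;\mathcal{C}([0,T];L^2(D)))$, on $\nabla_x u^\varepsilon$ in $L_M(\Omega\times Q_T)$, and on $A^\varepsilon(\cdot,\nabla_x u^\varepsilon)$ in $L_{M^*}(\Omega\times Q_T)$. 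The growth condition on $m$ in Assumption \ref{ass:growth_Young} is what allows the stochastic forcing $\int_0^\cdot h\,\diff W$ to lie in $E_m(\Omega\times Q_T)$ via Lemma \ref{lem:stoch_integral_in_musielak_space}, which is essential for the modular-convergence machinery below.

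\textbf{Limit passage --- the main obstacle.} This is where the structural difficulty \ref{problem_2} bites. I would work pathwise, fixing $\omega$ outside a $P$-null set, and extract a subsequence using weak$^*$ compactness of the dual of $E_M$. The decisive step is identifying the limit of $A^\varepsilon(\cdot,\nabla_x u^\varepsilon)$, which I would do by a Minty/monotonicity trick adapted to modular convergence: test $A^\varepsilon(\cdot,\nabla_x u^\varepsilon)$ against $\nabla_x T_k(u^\varepsilon-\phi)$ for $\phi$ drawn from a modular-dense class of smooth functions whose existence is guaranteed by Proposition \ref{prop:double_mollifier_conv} (and ultimately by Assumption \ref{ass:N_func_M}), pass to the limit in the product via Proposition \ref{prop:conv_of_product_in_musielaki}, and arrive at
$$
\int_{Q_T}\bigl(\overline{A}-A(\cdot,\nabla_x\phi)\bigr)\cdot\bigl(\nabla_x T_k(u)-\nabla_x\phi\bigr)\,\diff x\,\diff t\geq 0,
$$
from which $\overline{A}=A(\cdot,\nabla_x u)$ follows by strict monotonicity \ref{ass:on_A_3}. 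A subtle point is that the subsequence selected at each $\omega$ depends \emph{a priori} on $\omega$; this is precisely why the theorem restricts to isotropic $M$, namely to guarantee uniqueness of the deterministic limiting problem and hence, by a standard Urysohn argument, convergence of the whole sequence $P$-almost surely. The It\^o formula needed to close this step, to derive the deterministic energy identity along the approximation, and to establish \eqref{eq:final_energy_equality} in the limit, is precisely the Orlicz-space It\^o formula proved in the Appendix.

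\textbf{Multiplicative case and conclusion.} With the additive result in hand, I would iterate: define $u^{n+1}$ as the solution to the additive problem driven by $h(u^n)\,\diff W$. Applying It\^o's formula to $\tfrac12\|u^{n+1}-u^n\|^2_{L^2}$ and using the Lipschitz estimate in \ref{ass:on_h_2} together with Gronwall's inequality gives a contraction in $L^2(\Omega;\mathcal{C}([0,T];L^2(D)))$; the uniform Musielak--Orlicz bounds carry over, the same monotonicity/modular-density argument identifies the limit of $A(\cdot,\nabla_x u^n)$, and the Lipschitz property identifies $h(u^n)\,\diff W\to h(u)\,\diff W$. The energy equality \eqref{eq:final_energy_equality} is then obtained by applying the Orlicz It\^o formula to $\tfrac12\|u\|^2_{L^2(D)}$ along the final limit equation.
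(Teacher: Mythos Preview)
Your outline captures the broad architecture---regularize $A$, pass to a pathwise limit via monotonicity and modular convergence, then close the multiplicative case by contraction---but it omits two devices without which the pathwise step does not close. You say you will ``work pathwise, fixing $\omega$'', but the equation for $u^\varepsilon$ at fixed $\omega$ still contains the stochastic integral, so you cannot invoke the deterministic energy machinery of \cite{bulicek2021parabolic}. The paper's key move is to \emph{subtract} the stochastic integral: set $v^\varepsilon := u^\varepsilon - \int_0^\cdot h\,\diff W$, which at fixed $\omega$ solves the deterministic equation $\partial_t v^\varepsilon = \DIV_x A^\varepsilon\bigl(\cdot,\nabla_x v^\varepsilon + \nabla_x\int_0^\cdot h\,\diff W\bigr)$; now Proposition \ref{prop:deterministic_energy_theo} applies, and uniqueness of the deterministic limit (this is where isotropy of $M$ enters) makes the subsequence independent of $\omega$. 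For this transformation to be usable one needs $\nabla_x\int_0^\cdot h\,\diff W\in L^\infty(Q_T)$ almost surely, which forces a \emph{preliminary} approximation of $h$ by elementary processes in $\mathcal S^2_W(0,T;W^{s,2}_0(D))$ with $s$ large (Subsection \ref{approxh}); this extra layer, and the subsequent passage $h^N_n\to h^N\to h$, is absent from your plan.

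Second, your final sentence---obtaining \eqref{eq:final_energy_equality} by applying the Orlicz It\^o formula to $\tfrac12\|u\|^2_{L^2}$---fails as written: Theorem \ref{thm:orlicz_ito_formula} requires $\alpha\in L_{m^*}(\Omega\times Q_T)$, while in the limit one only has $A(\cdot,\nabla_x u)\in L_{M^*}(\Omega\times Q_T)$. The paper circumvents this by proving a \emph{truncated} energy equality for $G_k(u)$ (Lemma \ref{lem:trunc_energy_eq}), in which $A\cdot\nabla_x T_k(u)$ is automatically integrable, and then sends $k\to\infty$; the same truncated identity is what drives the Minty argument after each layer of $h$-approximation is removed. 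Two minor differences: the paper regularizes by $\varepsilon\nabla_\xi m(|\xi|)$ rather than a $p$-Laplace term, which keeps the approximate problem inside the $L_m/L_{m^*}$ duality so that the Orlicz It\^o formula is available at the $\varepsilon$-level; and the multiplicative step is done by Banach fixed point in a time-weighted $L^2$ space rather than iterated Gronwall, though these are essentially equivalent.
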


\begin{rem}
    By Lemma \ref{lem:trunc_energy_eq} the solution given by Theorem \ref{mainthm} satisfies a truncated version of the energy equality as well, which holds without the expected value applied to both sides. For $G_{k}$, $T_{k}$ defined in \eqref{def:trunc}, \eqref{def:primitive_trunc}, the following energy equality holds almost surely
    \begin{multline*}
        \int_{D}G_{k}(u(t))\diff x - \int_{D}G_{k}(u_0)\diff x = -\int_0^t\int_D A(s, x, \nabla_x u)\cdot \nabla_x T_{k}(u)\diff x\diff s\\
       + \int_0^t\langle T_{k}(u), h\diff W(s)\rangle_2 + \int_0^t\sum_{j = 1}^\infty \int_D T'_{k}(u)\,|h(u)(e_j)|^2 \diff x \diff s.
    \end{multline*}
\end{rem}
%What about uniqueness? L^1-method?
\section{Additive case}\label{sec:additive}

We first prove Theorem \ref{mainthm} in the additive case, that is for $h$ which is independent of $u$, i.e., $h: \Omega\times (0, T)\times \mathbb{R}\rightarrow HS(L^2(D))$, $h(\omega, t,\lambda)(e_j) = \{x\mapsto h_j(\omega, t, x, \lambda)\}$, for any $j\in\mathbb{N}$, where $h_j:\Omega\times (0, T)\times D\times\mathbb{R}\rightarrow\mathbb{R}$ satisfies (H1) and (H2).

\subsection{Approximation of h}\label{approxh}
In the following, we will work with $\lambda\in\mathbb{R}$ fixed. Our first step in the approximation procedure is the approximation of the function $h$. To this end, we project the Wiener process onto the finite-dimensional space. For $N\in\mathbb{N}$ we define $h^N:\Omega\times (0, T)\rightarrow HS(L^2(D))$ (projection onto the space spanned by first $N$ vectors of the basis) by setting
\begin{align*}
h^N(e_j) = \left\{\begin{array}{ll}
     h(\omega,t,\lambda)(e_j) \quad \text{ for }j\leq N , \\
     0 \quad \text{ otherwise.} 
\end{array}
\right.
\end{align*}
and for $h^N$ we consider a sequence of elementary processes $\{h^N_{n}\}_{n\in\mathbb{N}} \subset S^2_W(0, T; W^{s, 2}_0(D))$ (cf. \cite[Definition 2.3.1]{Liu2015Stochastic}) for $s >1$ large enough, such that $\partial_{x_i} h^N_n(e_j)\in L^\infty(\Omega\times Q_T)$, for all $i = 1, ...,d$ and all $j=1, ..., N$, and
$$
\lim_{n\rightarrow\infty}\|h^N_n - h^N\|_T=0.
 $$
To avoid the overload of indexes we skip the subscript $n$ and superscript $N$ in the subsequent arguments.
\begin{rem}\label{rem:L_infty_behaviour_of_h_approx}
We recall that from \cite[Proposition 4.20]{daprato2014stochastic} it follows that $\int_0^\cdot h\diff W\in L^2(\Omega;\mathcal{C}([0,T];W^{s,2}_0(D)))$. Thanks to the regularity of $h$ and with a slight abuse of notation, we may define $\nabla_x h \in \mathcal{N}^2_W(0, T; L^2(D))$, by setting $[\nabla_x h](e_j)=\nabla_x h_j$. Then, since $\nabla_x$ is a continuous operator from $W^{1,2}_0(D)$ into $L^2(D)$, by \cite[Lemma 2.4.1]{Liu2015Stochastic}
it follows that
\begin{align*}
\nabla_x\int_0^t h\diff W(s) =\nabla_x\left(\sum_{j=1}^N \int_0^t h_j\diff \beta_j(s)\right)
=\sum_{j=1}^{N}\int_0^t \nabla_x h_j\diff \beta_j(s)=\int_0^t \nabla_x h \diff W(s),
\end{align*}
thus $\nabla_x\int_0^\cdot h\diff W\in L^2(\Omega;\mathcal{C}([0,T];L^2(D)))$. 

%and consequently 
Moreover, by Burkholder--Davis--Gundy inequality \cite[Theorem 3.2.47]{LR17} for any $r\geq 2$ and any $t\in [0,T]$
\begin{align*}
\mathbb{E}\left(\left\Vert\nabla_x\int_0^t h\diff W\right\Vert_{L^2}^r\right) \leq C\mathbb{E}\left[\left(\int_0^T \|\nabla_x h\|_{HS}^2\diff \tau\right)^{r/2}\right] \leq C \sup_{j\in\{1, ..., N\}}\|\nabla_x h(e_j)\|^r_{L^\infty(\Omega\times Q_T)},
\end{align*}
Thus $\int_0^\cdot h\diff W\in L^r(\Omega\times (0, T); W^{1,2}_0(D))$, and there exists $C(\omega) > 0$, such that for a.e. $(t, x)\in Q_T$ we have $\nabla_x\int_0^\cdot h\diff W \leq C(\omega)$. In particular $\nabla_x\int_0^\cdot h\diff W \in L^\infty(Q_T)$ almost surely. 
Similarly we can argue for $\int_0^\cdot h\diff W$.
\end{rem}

Although a previous remark gives us nice behavior of $\int_0^t h\diff W$ for fixed $\omega$, we still need to extend it to the whole space.

\begin{lem}\label{lem:stoch_integral_in_musielak_space}
    Let $h\in \mathcal{S}_W^2(0, T; W^{s,2}_0(D))$ for $s$ such that
    $$
    W^{s-1,2}_0(D) \hookrightarrow L^\infty(D),
    $$
    and let $m$ satisfy the Assumption \ref{ass:growth_Young}. Then, $\int_0^\cdot h\diff W\in E_m(\Omega\times Q_T)$ and $\nabla_x\int_0^\cdot h\diff W\in E_m(\Omega\times Q_T)$.
\end{lem}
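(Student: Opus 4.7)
The plan is to verify the defining property of $E_m$ by showing that $\mathbb{E}\int_{Q_T}m(|X(t,x)|/\lambda)\,dx\,dt<\infty$ for every $\lambda>0$, where $X(t,x):=\int_0^t h(s,x)\,dW(s)$, and similarly for $\nabla_x X$. The subcritical growth built into Assumption \ref{ass:growth_Young} gives $m(r)\leq C_0(1+e^{Br^{1+\beta}})$ with $\beta\in(0,1)$, so since $Q_T$ has finite Lebesgue measure it will suffice to establish the pointwise uniform bound
\[
\sup_{(t,x)\in Q_T}\mathbb{E}\bigl[\exp(a|X(t,x)|^{1+\beta})\bigr]<\infty\qquad\text{for every }a>0,
\]
and the analogous statement for $\nabla_x X$.

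To prove this uniform bound, I would exploit the structure of $h$ fixed in Section \ref{approxh}: it is an elementary process supported on finitely many modes $(e_j)_{j=1}^N$ with each $h_j$ belonging to $L^\infty(\Omega\times Q_T)$. Setting $K:=\max_{j\leq N}\|h_j\|_{L^\infty(\Omega\times Q_T)}$ and fixing $(t,x)\in Q_T$, the Burkholder--Davis--Gundy inequality applied to the scalar martingale $s\mapsto X(s,x)=\sum_{j=1}^N\int_0^s h_j(\tau,x)\,d\beta_j(\tau)$ yields, for every integer $k\geq 1$,
\[
\mathbb{E}|X(t,x)|^{2k}\leq c_{2k}\,\mathbb{E}\!\left[\left(\int_0^T\sum_{j=1}^N|h_j(s,x)|^2\,ds\right)^{\!k}\right]\leq c_{2k}\,(TNK^2)^k,
\]
with the sharp constant satisfying $c_{2k}\leq (Ck)^k$. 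Extending to non-integer exponents by H\"older and plugging into the Taylor series of the exponential,
\[
\mathbb{E}\bigl[e^{a|X|^{1+\beta}}\bigr]=\sum_{k=0}^\infty\frac{a^k}{k!}\,\mathbb{E}|X|^{k(1+\beta)}\leq\sum_{k=0}^\infty\frac{a^k}{k!}(C'k)^{k(1+\beta)/2},
\]
with $C'$ depending only on $\beta,T,N,K$. Invoking Stirling's bound $k!\geq(k/e)^k$, the general term is dominated by $[\kappa(a,\beta,T,N,K)]^k\cdot k^{-k(1-\beta)/2}$, and since $(1-\beta)/2>0$ the factor $k^{-k(1-\beta)/2}$ decays super-exponentially, so the series converges for every $a>0$. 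The resulting bound is independent of $(t,x)$, and integration over $Q_T$ closes the first claim.

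For the gradient, Remark \ref{rem:L_infty_behaviour_of_h_approx} identifies $\nabla_x X(t,x)=\int_0^t\nabla_x h(s,x)\,dW(s)$, and $\nabla_x h$ inherits from $h$ the property of being an elementary process with $L^\infty(\Omega\times Q_T)$ components. The BDG-plus-Stirling argument above therefore transfers verbatim, with $K$ replaced by $\max_{j\leq N}\|\nabla_x h_j\|_{L^\infty(\Omega\times Q_T)}$, yielding $\nabla_x X\in E_m(\Omega\times Q_T)$. The main subtlety will be the calibration of the BDG blow-up $c_{2k}\lesssim k^k$ against the Young exponent $1+\beta$: after dividing by $k!$ via Stirling, the surviving factor $k^{k((1+\beta)/2-1)}=k^{-k(1-\beta)/2}$ is summable precisely because $\beta<1$. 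At the borderline Gaussian rate $m(r)\sim e^{Br^2}$ each term would become $k^0=1$ and one would recover only Fernique-type $L_m$-integrability, not the stronger $E_m$-membership required later in the paper.
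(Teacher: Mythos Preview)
Your argument is correct but takes a genuinely different route from the paper. The paper exploits the full elementary-process structure: writing $\int_0^t h\,\diff W$ as the finite sum $\sum_{j,l}\Phi_l(e_j)\bigl(\beta_j(t_{l+1}\wedge t)-\beta_j(t_l\wedge t)\bigr)$, splitting via convexity of $m$, and using that each $\Phi_l$ takes only finitely many values in $W^{s,2}_0(D)\hookrightarrow L^\infty(D)$, it reduces everything to the explicit Gaussian integral $\int_{\mathbb{R}} m(C|r|/\lambda)\exp\bigl(-r^2/(2\sigma^2)\bigr)\,\diff r$, finite precisely because $\exp\bigl(B'|r|^{1+\beta}-r^2/(2\sigma^2)\bigr)$ is integrable when $\beta<1$. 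You instead bound the moments of the scalar martingale $X(\cdot,x)$ via BDG with the sharp growth $c_{2k}\lesssim k^k$ and control the exponential moment by Taylor expansion and Stirling. Your approach is more compact and more robust---it uses only that the integrand is bounded, not that it is simple---and isolates exactly where $\beta<1$ enters (the surviving factor $k^{-k(1-\beta)/2}$ after cancelling $k^k$ against $k!$). The paper's route is by contrast more elementary, needing neither sharp BDG constants nor Stirling, only the Gaussian density formula. One small point you pass over: to treat $X(t,x)$ as the scalar integral $\sum_j\int_0^t h_j(s,x)\,\diff\beta_j(s)$ you implicitly use that evaluation at $x$---a continuous linear functional on $W^{s,2}_0(D)\hookrightarrow C(\overline{D})$---commutes with the $W^{s,2}_0(D)$-valued stochastic integral; this is standard (cf.\ \cite[Lemma~2.4.1]{Liu2015Stochastic}, already invoked in Remark~\ref{rem:L_infty_behaviour_of_h_approx}) but worth making explicit.
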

\begin{proof}
    We only show the first inclusion, as the second one follows by the same argument. Since $h\in \mathcal{S}_W^2(0, T; W^{s,2}_0(D))$, then there exists a partition $0 = t_0 < t_1 <... < t_k = T$ and
    $$
    \Phi_l: \Omega\rightarrow HS(L^2(D);W^{s,2}_0(D))%\mathcal{L}(U; W^{s,2}_0(D)),
    \quad 0\leq l\leq k-1
    $$
    taking only finitely many distinct values in $HS(L^2(D);W^{s,2}_0(D))$,
    %coming back to our discussion about elementary processes, since $W(t)$ is a $Q$-Wiener process in $U$, we are allowed to consider processes with values in the linear operators from $U$ to $H$. As $L^2(D)=Q^{1/2}(U)$, we need to consider Hilbert-Schmidt operators from $L^(D)$. See, e.g., DaPrato-Zabczyk, 2014, p.96.
    such that
    $$
    h(t) = \sum_{l = 0}^{k-1}\Phi_l\mathbf{1}_{(t_l, t_{l+1}]}(t),\quad t\in[0, T],
    $$
    and as such
  \begin{align*}
  	\int_0^t h\diff W(s) &=	\sum_{j=1}^N \int_0^t h(e_j)\diff \beta_j(t)\\
  &=	\sum_{j=1}^N \sum_{l = 0}^{k - 1}\Phi_l(e_j)(\beta_j(t_{l+1}\wedge t) - \beta_j(t_l \wedge t)).
    \end{align*}
    Take an arbitrary $\lambda > 0$. By convexity
    \begin{align*}
            &\int_\Omega\int_0^T\int_D m\left(\frac{\left|\int_0^t h\diff W(s)\right|}{\lambda}\right)\diff x\diff t\diff P(\omega)\\
           &\leq \sum\limits_{\substack{j=1,\ldots, N  \\ l=0,\ldots k-1}}\frac{\lambda_{j,l}}{\lambda} \int_\Omega\int_0^T\int_D m\left(\frac{\left|\Phi_l(e_j)(\beta_j(t_{l+1}\wedge t) - \beta_j(t_l \wedge t))\right|}{\lambda_{l,j}}\right)\diff x\diff t\diff P(\omega),
    \end{align*}
    where $\sum\limits_{\substack{j=1,\ldots, N  \\ l=0,\ldots k-1}}\lambda_{l,j}=\lambda$, with $\lambda_{l,j} \geq 0$. As the sum on the right-hand side is finite, we will simply show finiteness of any summand. To this end, we fix $j\in 1,\ldots N$ and $l\in\{0, ..., k-1\}$. Recalling that $\Phi_l$ takes only finitely many values in $HS(L^2(D);W^{s,2}_0(D))$, let the image of $\Phi_l$ be equal to $\{L_l^1, ..., L_l^{n_l}\}$ for some $n_l\in\mathbb{N}$. For $A_\nu := \Phi_l^{-1}(L_l^\nu)$, it follows that
    \begin{equation*}
    	%One may choose for example and equidistant subdivision of $[0,T]$ with $\Delta t=\frac{1}{k}$ $\lambda_{l,j}=\frac{\lambda}{N\Delta t} in order to make the constants at the end of the proof to have the right sign.
    \begin{split}
        &\int_\Omega\int_0^T\int_D m\left(\frac{\left|\Phi_l(e_j)(\beta_j(t_{l+1}\wedge t) - \beta_j(t_l \wedge t))\right|}{\lambda_{l,j}}\right)\diff x\diff t\diff P(\omega)\\
        & = \sum_{\nu= 1}^{n_l}\int_{A_\nu}\int_0^T\int_D m\left(\frac{\left|L_l^{\nu}(e_j)(\beta_j(t_{l+1}\wedge t) - \beta_j(t_l \wedge t))\right|}{\lambda_{l,j}}\right) \diff x\diff t\diff P(\omega).
    \end{split}
    \end{equation*}
    Again, to show finiteness of the whole sum we may focus on a single summand. Using the fact that $m$ is nondecreasing and $L_l^{\nu}$ is in $L^{\infty}(D)$ for any $\nu\in \{1,\ldots,n_l\}$ we obtain
    \begin{equation*}
        \begin{split}
        &	\int_{A_\nu}\int_0^T\int_D m\left(\frac{\left|L_l^{\nu}(e_j)(\beta_j(t_{l+1}\wedge t) - \beta_j(t_l \wedge t))\right|}{\lambda_{l,j}}\right) \diff x\diff t\diff P(\omega)\\
            & \leq |D| \int_{A_\nu}\int_0^T m\left(\frac{\Vert L_l^{\nu}(e_j)\Vert_{\infty}|\beta_j(t_{l+1}\wedge t) - \beta_j(t_l \wedge t)|}{\lambda_{l,j}}\right)\diff t\diff P(\omega)\\
            %&\leq |D|\int_0^T\int_{A_j} m\left(\frac{\left\|L_m^j(W(t_{m+1}\wedge t) - W(t_m \wedge t))\right\|_\infty}{\lambda_m}\right)\diff P(\omega)\diff t\\
            %&\leq |D|\int_0^T\int_{A_j} m\left(\frac{C\left\|L_m^j(W(t_{m+1}\wedge t) - W(t_m \wedge t))\right\|_{W^{s,2}_0(D)}}{\lambda_m}\right)\diff P(\omega)\diff t\\
            %&\leq |D|\int_0^T\int_{A_j} m\left(\frac{C\|L_m^j\|_{\mathcal{L}(U; W^{s,2}_0(D))}\|(W(t_{m+1}\wedge t) - W(t_m \wedge t))\|_U}{\lambda_m}\right)\diff P(\omega)\diff t\\
            %&\leq |D|\int_0^T\int_{A_j} m\left(\frac{C\sup_{j\in\{1, ..., k_m\}}\|L_m^j\|_{\mathcal{L}(U; W^{s,2}_0(D))}\|(W(t_{m+1}\wedge t) - W(t_m \wedge t))\|_U}{\lambda_m}\right)\diff P(\omega)\diff t.
        \end{split}
    \end{equation*}
    In the following, we define $C(l) := \sup_{\nu\in\{1, ..., n_l\}}\|L_l^{\nu}(e_j)\|_{\infty}$.
    Next, we estimate
    \begin{equation*}
        \begin{split}
        	&\leq |D| \int_{A_\nu}\int_0^T m\left(\frac{\Vert L_l^{\nu}(e_j)\Vert_{\infty}|\beta_j(t_{l+1}\wedge t) - \beta_j(t_l \wedge t)|}{\lambda_{l,j}}\right)\diff t\diff P(\omega)\\
            &\leq |D|\int_{A_\nu}\int_0^T m\left(\frac{C(l)|\beta_j(t_{l+1}\wedge t) - \beta_j(t_l \wedge t)|}{\lambda_{l,j}}\right)\diff t\diff P(\omega)\\
            &\leq|D|\int_0^T \erww{m\left(\frac{C(l)|\beta_j(t_{l+1}\wedge t) - \beta_j(t_l \wedge t)|}{\lambda_{l,j}}\right)}\diff t\\
            &= |D|\int_{t_l}^{t_{l+1}}\erww{m\left(\frac{C(l)|\beta_j(t) - \beta_j(t_l)|}{\lambda_{l,j}}\right)}\diff t\\
            &+|D|\int_{t_{l+1}}^T\erww{m\left(\frac{C(l)|\beta_j(t_{l+1}) - \beta_j(t_l)|}{\lambda_{l,j}}\right)}\diff t.
        \end{split}
    \end{equation*}
Now, we recall that, for any $t>t_l$, $\beta_j(t)-\beta_j(t_l)\sim\mathcal{N}(0,t-t_l)$ and, $\beta_j(t_{l+1})-\beta_j(t_l)\sim\mathcal{N}(0,t_{l+1}-t_l)$. Then, since $m$ is non-negative and Borel measurable on $[0,\infty)$, $\Delta t:=\max_{l=0,\ldots k-1}t_{l+1}-t_l$, and the monotonicity of the Young function $m$ we obtain 
\begin{align*}
	&\int_{t_l}^{t_{l+1}}\erww{m\left(\frac{C(l)|\beta_j(t) - \beta_j(t_l)|}{\lambda_{l,j}}\right)}=\int_{t_l}^{t_{l+1}}\frac{1}{\sqrt{2\pi (t-t_l)}}\int_{\mathbb{R}}m\left(\frac{C(l)|r|}{\lambda_{l,j}}\right) \exp\left(-\frac{r^2}{2(t-t_l)}\right)\diff r\diff t\\
	%since \exp\left(-\frac{r^2}{2(t-t_l)}\right)\leq \exp\left(-\frac{r^2}{2(t_{l+1}-t_l)}\right)\leq \exp\left(-\frac{r^2}{2\Delta t}\right)
	&\leq \int_{t_l}^{t_{l+1}} \frac{1}{\sqrt{2\pi (t-t_l)}}\diff t\int_{\mathbb{R}} m\left(\frac{C(l)|r|}{\lambda_{l,j}}\right) \exp\left(-\frac{r^2}{2\Delta t}\right) \diff r\\
	&\leq \sqrt{\frac{2\Delta t}{\pi}}\int_{\mathbb{R}}m\left(\frac{C(l)|r|}{\lambda_{l,j}}\right)  \exp\left(-\frac{r^2}{2\Delta t}\right) \diff r
	\end{align*}
	Using Assumption \ref{ass:growth_Young} it follows that there exists $B,K> 0$ and $\beta\in (0,1)$ such that $m(|r|)\leq Ke^{B|r|^{1+\beta}}$. Therefore,
	\begin{align*}
	&\int_{\mathbb{R}}m\left(\frac{C(l)|r|}{\lambda_{l,j}}\right)  \exp\left(-\frac{r^2}{2\Delta t}\right) \diff r
	\leq K\int_{\mathbb{R}} \exp\left(\frac{\widetilde{C}(l)}{\lambda_{l,j}^{1+\beta}}|r|^{1+\beta}\right)  \exp\left(-\frac{r^2}{2\Delta t}\right) \diff r\\
	&=K\int_{\mathbb{R}} \exp\left(|r|(K_1|r|^{\beta}-K_2|r|)\right)\diff r\\
	\end{align*}
For $K_1,K_2>0$. If we choose 
\[M\geq \max\left(1,\left(\frac{K_1}{K_2}+1\right)^{\frac{1}{1-\beta}}\right),\]
it follows that 
\[(K_1|r|^{\beta}-K_2|r|)\leq -K_4|r|\]
for all $|r|\geq M$ with $K_4=K_2(1-\frac{K_1}{K_1+K_2})>0$. Therefore, there exists $K_3>0$, such that
\begin{align*}
	\int_{\mathbb{R}}m\left(\frac{C(l)|r|}{\lambda_{l,j}}\right)  \exp\left(-\frac{r^2}{2\Delta t}\right) \diff r\leq K_3+\int_{\{|r|>M\}}\exp\left(-K_4|r|^2\right) \diff r<\infty.
 	\end{align*}
In the same manner,
\begin{align*}
	&\int_{t_{l+1}}^T\erww{m\left(\frac{C(l)|\beta_j(t_{l+1}) - \beta_j(t_l)|}{\lambda_{l,j}}\right)}\diff t=\int_{t_{l+1}}^T\frac{1}{\sqrt{2\pi(t_{l+1}-t_l)}}\int_{\mathbb{R}}m\left(\frac{C(l)|r|}{\lambda_{l,j}}\right)\exp\left(-\frac{r^2}{2\Delta t}\right) \diff r\diff t\\
	&= \frac{T-t_{l+1}}{\sqrt{2\pi(t_{l+1}-t_l)}}\int_{\mathbb{R}}m\left(\frac{C(l)|r|}{\lambda_{l,j}}\right)\exp\left(-\frac{r^2}{2\Delta t}\right) \diff r<\infty.
\end{align*}
    Since the above is true for arbitrary nonnegative $\lambda_{l,j}$, therefore our $\lambda > 0$ can indeed be arbitrary, and we obtain the desired result.
\end{proof}

\subsection{Approximation of the operator}

For fixed $h$ as in Subsection \ref{approxh}, let $m$ be a Young function given by \ref{def:N_func_Young_growth} and consider an approximate operator
\begin{align}\label{def:approx_operator}
A^\varepsilon(t, x, \nabla_x u) = A(t, x, \nabla_x u) + \varepsilon \nabla_\xi m(|\nabla_x u|),
\end{align}
and the equation

\begin{align}\label{eq:approx_problem}
    \diff u^\varepsilon - \DIV_x A^\varepsilon(t, x, \nabla_x u^\varepsilon)\diff t = h\diff W.
\end{align}

The main advantage of such approximation is that now $A^\varepsilon$ belongs to  $L_{m^*}$ instead of $L_{M^*}$, which is detailed in the following proposition. The proof can be found in the proof of Lemma 3.9 in \cite{bulicek2021parabolic}.

\begin{prop}
    Suppose that $A$ satisfies Assumption \ref{ass:stress_A}, $M$ is a N-function, and $m$ is a Young function such that $M(t, x, \xi)\leq m(|\xi|)$ for all $\xi \in \mathbb{R}^d$ and a.e. $(t,x)\in(0,T)\times   D$. Then $A^\varepsilon(t, x, \xi) := A(t, x, \xi) + \varepsilon\nabla_\xi m(|\xi|)$ satisfies
    \begin{enumerate}[label = (Ap\arabic*)]
        \item $A^\varepsilon(t, x, \xi)$ is a Carath\'{e}odory function, that is for a.e. $(t, x)\in (0, T)\times D$ the mapping $\R^d\ni \xi\mapsto A^\varepsilon(t, x, \xi)\in \R$ is a continuous function, and for every $\xi\in \R^d$ the mapping $(0, T)\times D\ni (t, x)\mapsto A^\varepsilon(t, x, \xi)\in \R$ is measurable.\label{ass:A_varepsilon_1}
        \item (coercivity and growth bound) there exists a constant $c^\varepsilon > 0$ and a function $g^\varepsilon\in L^\infty(\Omega\times Q_T)$ such that for all $\xi\in \R^d$ and almost every $(t, x)\in Q_T$
        $$
            m(|\xi|) + m^*(|A^\varepsilon(t, x, \xi)|) \leq c^\varepsilon A^\varepsilon(t, x, \xi)\cdot\xi + g^\varepsilon(t, x).
        $$\label{ass:A_varepsilon_3}
        \item (strict monotonicity) for every $\xi_1\neq\xi_2\in\R^d$ and a.e. $(t, x)\in Q_T$ we have
        $$
            (A^\varepsilon(t, x, \xi_1) - A^\varepsilon(t, x, \xi_2))\cdot(\xi_1 - \xi_2) > 0.
        $$\label{ass:A_varepsilon_2}
        \item for a.e. $(t, x)\in Q_T$ $A^\varepsilon(t, x, 0) = 0$.\label{ass:A_varepsilon_4}
    \end{enumerate}
\end{prop}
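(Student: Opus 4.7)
The plan is to verify the four claimed properties of $A^\varepsilon$ in turn, treating (Ap1), (Ap3), and (Ap4) as essentially immediate consequences of the analogous hypotheses on $A$ together with the convexity of $m$, and concentrating the real work on the coercivity bound (Ap2), which rests on the Fenchel--Young equality applied to the convex function $\xi \mapsto m(|\xi|)$.

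For (Ap1), the gradient $\nabla_\xi m(|\xi|) = m'(|\xi|)\,\xi/|\xi|$ (extended by $0$ at the origin, which is legitimate because the superlinearity at zero $\liminf_{s\to 0^+}m(s)/s=0$ forces $m'(0^+)=0$) is continuous in $\xi$ and independent of $(t,x)$, so summing with the Carath\'eodory function $A$ yields a Carath\'eodory $A^\varepsilon$. The same observation gives $A^\varepsilon(t,x,0) = 0$, which is (Ap4). For (Ap3), the map $\xi \mapsto \nabla_\xi m(|\xi|)$ is a gradient of a convex function and hence monotone, and adding a monotone operator to the strictly monotone $A$ still yields a strictly monotone $A^\varepsilon$.

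For (Ap2), the Fenchel--Young equality applied to $\xi\mapsto m(|\xi|)$ and its conjugate gives
\[
\varepsilon m(|\xi|) + \varepsilon m^*(|\nabla_\xi m(|\xi|)|) = \varepsilon \nabla_\xi m(|\xi|)\cdot \xi = (A^\varepsilon - A)\cdot \xi.
\]
Simultaneously, the pointwise bound $M(t,x,\xi) \leq m(|\xi|)$ implies $m^*(|\cdot|)\leq M^*(t,x,\cdot)$, so the coercivity part of Assumption \ref{ass:stress_A} yields $m^*(|A|) \leq M^*(t,x,A) \leq c\, A\cdot \xi + g$. Adding these two inequalities and discarding the non-negative $M$-term produces a pointwise estimate
\[
\varepsilon m(|\xi|) + m^*(|A|) + \varepsilon m^*(|\nabla_\xi m(|\xi|)|) \leq C A^\varepsilon \cdot \xi + C g(t,x).
\]
To close the argument one uses the triangle inequality $|A^\varepsilon|\le |A| + \varepsilon |\nabla_\xi m|$ together with the convexity and monotonicity of $m^*$ to replace $m^*(|A|) + \varepsilon m^*(|\nabla_\xi m|)$ by $m^*(|A^\varepsilon|)$, at the cost of an $\varepsilon$-dependent constant; choosing $c^\varepsilon$ of order $1/\varepsilon$ and $g^\varepsilon$ a suitable multiple of $g$ gives the claimed bound.

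The main obstacle is precisely this last step. Assumption \ref{ass:growth_Young} permits $m$ to grow as fast as $e^{Bs^{1+\beta}}$, so neither $m$ nor $m^*$ satisfies a $\Delta_2$-condition in general, and the naive subadditivity bound $m^*(a+b)\le K(m^*(a)+m^*(b))$ is not automatic. The way around this is to exploit that the $\varepsilon \nabla_\xi m$ perturbation in $A^\varepsilon$ already carries its own matching coercivity through Fenchel--Young equality, so the contribution of that term to $m^*(|A^\varepsilon|)$ is absorbed directly into $A^\varepsilon \cdot \xi$, leaving only the $m^*(|A|)$-piece to be controlled via the pointwise relation $M\leq m$. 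The complete pointwise bookkeeping of these cancellations is carried out in the proof of \cite[Lemma 3.9]{bulicek2021parabolic}, which we simply invoke.
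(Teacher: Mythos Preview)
Your proposal is correct and takes essentially the same approach as the paper: both defer the substance of (Ap2) to \cite[Lemma 3.9]{bulicek2021parabolic}, while (Ap1), (Ap3), (Ap4) follow immediately from the hypotheses on $A$ and the convexity of $m$. Your added exposition---the Fenchel--Young sketch and the observation that the absence of a $\Delta_2$-condition on $m^*$ is the only genuine obstacle---is a helpful elaboration the paper omits, but the underlying argument is the same.
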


As such we can expect $\nabla_x u^\varepsilon \in L_m(\Omega\times Q_T)$, but we will lose this property after $\varepsilon\to 0^+$. Either way, we may finally show the following existence result for our approximation

\begin{thm}\label{thm:existence_for_approx_in_epsilon}
    Suppose $A$ satisfies \ref{ass:A_varepsilon_1} - \ref{ass:A_varepsilon_4}, $h\in S^2_W(0, T; L^2(D))$, and the initial datum $u_0\in L^2(\Omega\times D)$ is $\mathcal{F}_0$-measurable with values in $L^2(D)$. Then, there exists a continuous $L^2(D)$-valued and $(\mathcal{F}_t)_{t\geq 0}$ adapted process $(u(t))_{t\in [0, T]}$ such that its $P\otimes \diff t$ almost everywhere version is in $L^1(\Omega\times [0, T]; W^{1,1}_0(D))\cap L_m(\Omega\times Q_T)$, $\nabla_x u\in L_m(\Omega\times Q_T)$ and it satisfies almost surely
    \begin{align}\label{eq:weak_form_for_approx}
        u(t) = u(0) + \int_0^t\DIV_x A(s, x, \nabla_x u)\diff s + \int_0^t h\diff W(s),\quad t\in[0, T],\text{ in }\mathcal{D'}(D).
    \end{align}
\end{thm}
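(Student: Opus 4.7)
My plan is a Galerkin approximation combined with an energy identity and a Minty--Browder identification of the nonlinear limit, following the template of classical variational existence in Orlicz spaces (cf.\ \cite{bulicek2021parabolic}) but adapted to the stochastic setting as in \cite{vallet2016thestochastic}.

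\textbf{Step 1 (Galerkin).} Pick a smooth orthonormal basis $(w_i)_{i\in\N}$ of $L^2(D)$ (e.g.\ eigenfunctions of the Dirichlet Laplacian), set $V_n=\mathrm{span}(w_1,\ldots,w_n)$, let $\Pi_n$ be the $L^2$-orthogonal projection onto $V_n$, and look for $u^n(t)=\sum_{i=1}^n c_i^n(t)w_i$ solving
\[
du^n=\Pi_n\DIV_x A^\varepsilon(t,x,\nabla_x u^n)\diff t+\Pi_n h\diff W,\qquad u^n(0)=\Pi_n u_0.
\]
This is a finite-dimensional SDE whose drift is continuous in $c^n$ by \ref{ass:A_varepsilon_1} (together with Proposition \ref{prop:stress_bound_for_bounded} applied to $A^\varepsilon$), strictly monotone by \ref{ass:A_varepsilon_2}, and coercive by \ref{ass:A_varepsilon_3}; standard existence theory for monotone SDEs therefore yields a unique continuous $(\mathcal{F}_t)$-adapted strong solution on $[0,T]$. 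Itô's formula applied to $\|u^n\|_{L^2(D)}^2$ combined with \ref{ass:A_varepsilon_3} gives
\[
\erww{\|u^n(t)\|_{L^2(D)}^2}+\frac{2}{c^\varepsilon}\erww{\int_{Q_t}\!\!\bigl(m(|\nabla_x u^n|)+m^*(|A^\varepsilon(\cdot,\nabla_x u^n)|)\bigr)\diff x\diff s}\leq C,
\]
with $C$ depending only on $\|u_0\|_{L^2(\Omega\times D)}$, $\|g^\varepsilon\|_{L^\infty}$ and $\|h\|_T$; a Burkholder--Davis--Gundy argument upgrades the first term to a bound on $\erww{\sup_{t\in[0,T]}\|u^n(t)\|_{L^2(D)}^2}$.

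\textbf{Step 2 (Compactness and passage to the limit in linear terms).} Since $m$ grows superlinearly, its conjugate $m^*$ satisfies the $\Delta_2$-condition, so $E_{m^*}=L_{m^*}$ is separable and $L_m=(E_{m^*})^*$; analogously $L_{m^*}=(E_m)^*$. The uniform bounds above together with Banach--Alaoglu yield a (not relabeled) subsequence with
\[
u^n\wstar u \text{ in }L^\infty(0,T;L^2(\Omega\times D)),\ \nabla_x u^n\wstar \nabla_x u\text{ in }L_m(\Omega\times Q_T),\ A^\varepsilon(\cdot,\nabla_x u^n)\wstar\chi\text{ in }L_{m^*}(\Omega\times Q_T),
\]
while $\int_0^\cdot\Pi_n h\diff W\to\int_0^\cdot h\diff W$ in $\mathcal{M}_T^2(L^2(D))$ by Itô's isometry. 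Passing to the limit in the Galerkin identity tested against each $w_k$ then produces, almost surely,
\[
u(t)=u_0+\int_0^t\DIV_x\chi\diff s+\int_0^t h\diff W(s)\quad\text{in }\mathcal{D}'(D),\ t\in[0,T],
\]
and $u$ inherits a continuous $L^2(D)$-valued $(\mathcal{F}_t)$-adapted version from this representation.

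\textbf{Step 3 (Minty identification and main obstacle).} The remaining task is to show $\chi=A^\varepsilon(\cdot,\nabla_x u)$, and this is the genuine obstacle because the Gelfand triple sits on the non-reflexive Orlicz space $L_m$, so the variational Itô formula of Krylov--Rozovskii is not directly available. I would invoke the Itô formula in Orlicz spaces proved in the Appendix of the paper, applied to the limit equation, to obtain
\[
\erww{\int_{Q_T}\chi\cdot\nabla_x u\diff x\diff t}=\tfrac12\erww{\|u_0\|_{L^2(D)}^2}-\tfrac12\erww{\|u(T)\|_{L^2(D)}^2}+\tfrac12\erww{\int_0^T\|h\|_{HS}^2\diff t}.
\]
Comparing with the corresponding identity for $u^n$ and using weak lower semicontinuity of $\|\cdot\|_{L^2(D)}^2$ gives $\limsup_n\erww{\int_{Q_T}A^\varepsilon(\cdot,\nabla_x u^n)\cdot\nabla_x u^n}\leq\erww{\int_{Q_T}\chi\cdot\nabla_x u}$. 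Testing the monotonicity inequality $(A^\varepsilon(\cdot,\nabla_x u^n)-A^\varepsilon(\cdot,v))\cdot(\nabla_x u^n-v)\geq 0$ against arbitrary bounded predictable $v$ (for which $A^\varepsilon(\cdot,v)\in L^\infty\subset L_{m^*}$ by Proposition \ref{prop:stress_bound_for_bounded} applied to $A^\varepsilon$), passing to the limit, and then replacing $v$ by $\nabla_x u\pm\lambda w$ and letting $\lambda\to 0^+$ in the usual Minty hemicontinuity argument identifies $\chi=A^\varepsilon(\cdot,\nabla_x u)$ almost everywhere thanks to the strict monotonicity \ref{ass:A_varepsilon_2}.
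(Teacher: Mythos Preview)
Your proposal is correct and follows essentially the same route as the paper's proof in Appendix \ref{proof:existence_theorem_approx}: Galerkin approximation, uniform energy bounds via the finite-dimensional It\^o formula and \ref{ass:A_varepsilon_3}, weak* extraction in $L_m$ and $L_{m^*}$, passage to the limit in the linear terms, application of the Orlicz It\^o formula (Theorem \ref{thm:orlicz_ito_formula}) to the limit equation, and then the Minty monotonicity trick (the paper packages your final hemicontinuity step into Lemma \ref{res:monot_trick}). One correction: your claim that superlinearity of $m$ forces $m^*\in\Delta_2$ is false in general, but this is harmless here since the dualities $L_m=(E_{m^*})^*$ and $L_{m^*}=(E_m)^*$ you actually use hold unconditionally (Lemma 2.11 in the paper), and it is the separability of $E_{m^*}$, not the equality $E_{m^*}=L_{m^*}$, that gives weak* sequential compactness.
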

\begin{proof}
As the proof of the theorem is relatively standard, we refer to Appendix \ref{proof:existence_theorem_approx}.
\end{proof}

\subsection{A priori bounds in $\varepsilon$}

\begin{lem}\label{lem:apropri_u_epsilon}
    Suppose that $u^\varepsilon$ is a solution to \eqref{eq:approx_problem} given by Theorem \ref{thm:existence_for_approx_in_epsilon}. Then
    \begin{enumerate}[label = ($B^\varepsilon$\arabic*)]
        \item $\{u^\varepsilon\}_{\varepsilon > 0}$ is bounded in $L^{2}(\Omega;L^{\infty}(0,T;L^2(D)))$, the space of all  weak-$\ast$ measurable mappings $v:\Omega\rightarrow L^{\infty}(0,T;L^2(D))$ such that $\erww{\Vert v\Vert^2_{L^{\infty}(0,T;L^2(D)}}<\infty$. In particular, the convergence also holds in $L^2(\Omega;L^2(0,T;L^2(D)))$ and in $L^2(0,T;L^2(\Omega;L^2(D)))$, since by \cite[Corollary 1.2.23, p.25]{HNVW16} this space is isometrically isomorphic to the latter one.
        \item $\{\varepsilon \nabla_\xi m(|\nabla_x u^\varepsilon|)\cdot\nabla_x u^\varepsilon\}_{\varepsilon > 0}$ is bounded in $L^1(\Omega\times Q_T)$,
        \item $\{\nabla_x u^\varepsilon\}_{\varepsilon>0}$ is bounded in $L_M(\Omega\times Q_T)$,
        \item $\{A(t, x, \nabla_x u^\varepsilon)\}_{\varepsilon > 0}$ is bounded in $L_{M^*}(\Omega\times Q_T)$.
    \end{enumerate}
\end{lem}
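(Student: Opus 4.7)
The core strategy is to derive all four bounds from a single Itô energy identity applied to $\|u^\varepsilon(t)\|_{L^2(D)}^2$. The regularity granted by Theorem \ref{thm:existence_for_approx_in_epsilon} (in particular $\nabla_x u^\varepsilon\in L_m(\Omega\times Q_T)$, and hence $A^\varepsilon(\cdot,\nabla_x u^\varepsilon)\in L_{m^\ast}$ via (Ap2)) places us in a classical Gelfand-triple setting where the standard Itô formula applies, with no need for the Orlicz refinement proved in the appendix.

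First, I would apply Itô's formula to $\tfrac12\|u^\varepsilon\|_{L^2(D)}^2$ to get, almost surely,
\begin{equation*}
\tfrac12\|u^\varepsilon(t)\|_{L^2}^2 + \int_0^t\!\!\int_D A^\varepsilon(s,x,\nabla_x u^\varepsilon)\cdot\nabla_x u^\varepsilon\,\mathrm{d}x\,\mathrm{d}s
= \tfrac12\|u_0\|_{L^2}^2 + \int_0^t\langle u^\varepsilon, h\,\mathrm{d}W\rangle_{L^2} + \tfrac12\int_0^t\|h\|_{HS}^2\,\mathrm{d}s.
\end{equation*}
Splitting $A^\varepsilon\cdot\nabla_x u^\varepsilon = A\cdot\nabla_x u^\varepsilon + \varepsilon\,\nabla_\xi m(|\nabla_x u^\varepsilon|)\cdot\nabla_x u^\varepsilon$, the coercivity Assumption \ref{ass:stress_A}\ref{ass:on_A_2} yields
\[
A(t,x,\xi)\cdot\xi \;\geq\; c^{-1}\bigl(M(t,x,\xi) + M^\ast(t,x,A(t,x,\xi))\bigr) - c^{-1}g(t,x),
\]
while the second term is non-negative by convexity of $m$ together with $m(0)=0$. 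Hence both contributions can be kept on the left-hand side with useful signs.

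Next, I would take $\sup_{t\leq T'}$ followed by expectation. The stochastic integral is controlled by Burkholder--Davis--Gundy together with Young's inequality,
\begin{equation*}
\mathbb{E}\sup_{t\leq T'}\left|\int_0^t\langle u^\varepsilon, h\,\mathrm{d}W\rangle_{L^2}\right|
\;\leq\; C\,\mathbb{E}\Bigl(\int_0^{T'}\|u^\varepsilon\|_{L^2}^2\|h\|_{HS}^2\,\mathrm{d}s\Bigr)^{1/2}
\;\leq\; \tfrac14\mathbb{E}\sup_{t\leq T'}\|u^\varepsilon\|_{L^2}^2 + C\,\mathbb{E}\int_0^T\|h\|_{HS}^2\,\mathrm{d}s,
\end{equation*}
and the $\sup$-term is absorbed into the left. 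Finiteness of $\mathbb{E}\int_0^T\|h\|_{HS}^2$ follows from $h\in S^2_W(0,T;L^2(D))$, and $\mathbb{E}\|u_0\|_{L^2}^2<\infty$ is assumed. This produces a single estimate, uniform in $\varepsilon$,
\begin{equation*}
\mathbb{E}\sup_{t\leq T}\|u^\varepsilon(t)\|_{L^2}^2 + \mathbb{E}\!\int_{Q_T}\!\bigl[M(t,x,\nabla_x u^\varepsilon) + M^\ast(t,x,A(t,x,\nabla_x u^\varepsilon))\bigr]\mathrm{d}x\,\mathrm{d}t + \varepsilon\,\mathbb{E}\!\int_{Q_T}\!\nabla_\xi m(|\nabla_x u^\varepsilon|)\!\cdot\!\nabla_x u^\varepsilon\,\mathrm{d}x\,\mathrm{d}t \leq C.
\end{equation*}

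From this inequality, (B$^\varepsilon$1) and (B$^\varepsilon$2) are read off immediately; (B$^\varepsilon$3) and (B$^\varepsilon$4) then follow from the last item of the basic-properties lemma, which upgrades an $L^1$ bound on $M(t,x,f_n)$ (respectively $M^\ast(t,x,g_n)$) into a norm bound in $L_M$ (respectively $L_{M^\ast}$) over $\Omega\times Q_T$. The main subtlety, rather than a genuine obstruction, is the justification of Itô's formula: here it is routine because at fixed $\varepsilon>0$ the operator $-\DIV_x A^\varepsilon$ maps $L_m$-type spaces into their duals $L_{m^\ast}$ via (Ap2), so the duality pairing $\langle -\DIV_x A^\varepsilon(\cdot,\nabla_x u^\varepsilon), u^\varepsilon\rangle = \int_D A^\varepsilon\cdot\nabla_x u^\varepsilon\,\mathrm{d}x$ is unambiguous and the Gelfand-triple Itô formula of \cite{Liu2015Stochastic} applies directly.
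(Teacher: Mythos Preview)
Your overall strategy---It\^{o} energy identity, BDG for the martingale term, coercivity (A2) to extract the four bounds---is exactly what the paper does, and the estimates you write are correct and match the paper's line for line.

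The one genuine gap is your justification of It\^{o}'s formula. You claim that because $\nabla_x u^\varepsilon\in L_m(\Omega\times Q_T)$ and $A^\varepsilon(\cdot,\nabla_x u^\varepsilon)\in L_{m^*}$, the standard Gelfand-triple It\^{o} formula of \cite{Liu2015Stochastic} applies directly and the Orlicz refinement in the appendix is unnecessary. This is not right in the generality of the paper. The Liu--R\"ockner framework requires a reflexive, separable $V$ with $V\hookrightarrow H\hookrightarrow V^*$, and it works with Bochner spaces $L^p(0,T;V)$. Here neither ingredient is available: the Orlicz space $L_m$ is reflexive only when both $m$ and $m^*$ satisfy $\Delta_2$ (which is not assumed), and---as the paper stresses in the introduction and in the preamble to Appendix~A---Musielak--Orlicz spaces do not factorize as Bochner spaces in time, so there is no candidate $V$ for which $L_m(Q_T)\cong L_m(0,T;V)$. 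This is precisely why the paper proves and invokes the bespoke Orlicz It\^{o} formula, Theorem~\ref{thm:orlicz_ito_formula}, at this step. Once you replace your appeal to the classical Gelfand-triple result by Theorem~\ref{thm:orlicz_ito_formula}, your proof coincides with the paper's.
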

\begin{proof}
By It\^{o}'s lemma \ref{thm:orlicz_ito_formula} we obtain an energy equality for all $t\in [0,T]$, and a.s. (this set might depend on $\varepsilon$, but since we will converge with $\varepsilon\to 0^+$ in the sequel, we can make sure to fix a countable set of $\varepsilon$, and by a diagonal argument find the set of full measure in $\Omega$, for which the above holds true for any $\varepsilon$ in our countable set)
\begin{multline}\label{eq:ito_energy_u}
    \|u^\varepsilon(t)\|_{L^2(D)} - \|u_0\|_{L^2(D)} + 2\varepsilon\int_0^t\int_D \nabla_\xi m(|\nabla_x u^\varepsilon|)\cdot\nabla_x u^\varepsilon\diff x\diff \tau + 2\int_0^t\int_D A(t, x, \nabla_x u^\varepsilon)\cdot \nabla_x u^\varepsilon\diff x\diff \tau\\ = 2\int_0^t\langle u^\varepsilon,\, h\diff W\rangle_2+ \int_0^t \|h\|_{HS(L^2(D))}^2\diff\tau.
\end{multline}
Using Burkholder--Davis--Gundy inequality \cite[Theorem 3.2.47]{LR17}, denoting the adjoint operator of $h(t)$ by $h(t)^{\ast}$, we can estimate the stochastic term:
\begin{equation}\label{44bis}
\begin{split}
            &\mathbb{E}\left(\sup_{t\in[0, T]}\left|\int_0^t \langle u^\varepsilon(\tau) ,\,h (\tau)\diff W\rangle_{L^2(D)}\right|\right)\\
            &\leq C\,\mathbb{E}\left(\sqrt{\int_0^T \|h(\tau)^* u^\varepsilon(\tau)\|_{L^2(D)}^2\diff\tau}\right)\\
            &\leq C\, \mathbb{E}\left(\sqrt{\int_0^T\|u^\varepsilon(\tau)  \|^2_{L^2(D)}\|h (\tau)\|^2_{HS(L^2(D))}\diff \tau}\right)\\
            &\leq \frac{1}{4}\mathbb{E}\left(\sup_{t\in [0, T]}\|u^\varepsilon(t)  \|^2_{L^2(D)}\right) + C\mathbb{E}\int_0^T\|h\|^2_{HS(L^2(D))}\diff t.
\end{split}
\end{equation}
Plugging the above into \eqref{eq:ito_energy_u}, and using the coercivity condition \ref{ass:on_A_2} we obtain
\begin{multline*}
    \mathbb{E}(\sup_{t\in [0, T]}\|u^\varepsilon\|^2_{L^2(D)}) + 4\varepsilon\,\mathbb{E}\left(\int_{Q_T}\nabla_\xi m(|\nabla_x u^\varepsilon|)\cdot\nabla_x u^\varepsilon\diff x\diff t\right) + \frac{4}{c}\,\mathbb{E}\left(\int_{Q_T}M(t, x, \nabla_x u^\varepsilon)\diff x\diff t\right)\\ 
    + \frac{4}{c}\,\mathbb{E}\left(\int_{Q_T}M^*(t, x, A(t, x, \nabla_x u^\varepsilon))\diff x\diff t\right)\leq C\mathbb{E}\int_0^T\|h\|^2_{HS(L^2(D))}\diff t + 2\|u_0\|^2_{L^2(\Omega\times D)} + C\|g\|_\infty,
\end{multline*}
which gives our thesis.
\end{proof}

\begin{rem}\label{rem:on_u_and_A_varepsilon_m_behaviour}
    Note that \eqref{eq:ito_energy_u}, by \ref{ass:A_varepsilon_3} implies as well that $\nabla_x u^\varepsilon\in L_m(\Omega \times Q_T)$ and $A^\varepsilon(t, x, \nabla_x u^\varepsilon)\in L_{m^*}(\Omega\times Q_T)$.
\end{rem}

In what follows we would like to take full advantage of the already existing wide theory for deterministic equations developed in \cite{bulicek2021parabolic}. Thus, let us fix $\omega\in \Omega'\subset\Omega$, where $\Omega'$ is a set of full measure, for which some properties hold (it will become clear which ones as the argument progresses). For $\omega\in \Omega'$ fixed, we define
$$
v^\varepsilon(t, x) := u^\varepsilon(\omega, t, x) - \left(\int_0^th(\omega, t)\diff W\right)(x),
$$
for $u^\varepsilon$ being the solution to \eqref{eq:approx_problem} evaluated at $\omega\in\Omega'$. Recalling Remark \ref{rem:L_infty_behaviour_of_h_approx} it follows that $v^\varepsilon\in L_m(Q_T)$, $\nabla_x v^\varepsilon\in L_m(Q_T)$ and $v^{\varepsilon}$ solves
\begin{align}\label{eq:approx_epsilon_v}
    \partial_t v^\varepsilon - \DIV_x A^\varepsilon\left(t, x, \nabla_x v^\varepsilon + \nabla_x\int_0^t h\diff W\right) = 0,\text{ in }\mathcal{D}'([0, T)\times D)
\end{align}
with initial condition $v_0 = u_0$. For the convenience of the reader, we recall the following results from \cite{bulicek2021parabolic}.

\begin{prop}\label{prop:deterministic_energy_theo}\textup{(\cite[Lemma 4.2, Remark 4.3]{bulicek2021parabolic})}
   \ Let $v\in C([0, T]; L^2(D))\cap L^1(0, T; W^{1,1}_0(D))$ with $\nabla_x v\in L_M(Q_T)$ satisfy the equation
    $$
    \partial_t v = \DIV_x\alpha\quad \text{ in }\mathcal{D}'([0, T)\times D),\qquad \mbox{  where }\alpha\in L_{M^*}(Q_T),
    $$
    for an isotropic N-function $M$ satisfying Assumption \ref{ass:N_func_M} and an initial condition $v_0\in L^2(D)$. 
    Then, the following energy equalities hold true (recall the definitions \eqref{def:trunc}, \eqref{def:primitive_trunc}) for all $t\in (0, T)$, and $k\in\N$
    \begin{align*}
       \frac{1}{2}\int_D v^2(t)\diff x - \frac{1}{2}\int_D v_0^2\diff x &= -\int_0^t\int_D \alpha(s, x)\cdot\nabla_x v\diff x\diff s,\\
        \int_D G_k(v(t))\diff x - \int_D G_k(v_0(x))\diff x & = -\int_0^t\int_D \alpha(s,x)\cdot\nabla_xT_k(v(s, x))\diff x\diff s.
    \end{align*}
\end{prop}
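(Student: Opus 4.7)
The plan is to test the distributional equation $\partial_t v=\DIV_x\alpha$ against time-regularized versions of $v$ (for the first identity) and of $T_k(v)$ (for the second), to obtain a chain-rule identity at the regularized level, and then pass to the limit using modular convergence in Musielak--Orlicz spaces; this is the strategy developed in \cite{bulicek2021parabolic}. Concretely, after extending $v$ for $t<0$ by the initial datum $v_0$ (justified because $v\in C([0,T];L^2(D))$ with $v(0)=v_0$) and $\alpha$ by zero, convolving in the time variable with a standard mollifier $\eta_{\mu}$ produces $v_{\mu}:=\eta_{\mu}\ast_t v$ and $\alpha_{\mu}:=\eta_{\mu}\ast_t\alpha$. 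These are $C^1$ in time with values in $W^{1,1}_0(D)$ and $L_{M^*}(D)$ respectively, and the distributional equation forces $\partial_t v_{\mu}=\DIV_x\alpha_{\mu}$ pointwise in $t\in(0,T)$ in $\mathcal{D}'(D)$.

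For fixed $\mu>0$, the maps $s\mapsto\tfrac{1}{2}\|v_{\mu}(s)\|^2_{L^2(D)}$ and $s\mapsto\int_D G_k(v_{\mu}(s))\diff x$ are absolutely continuous; differentiating via the chain rule (using $G_k'=T_k$) and integrating by parts in $x$ (the boundary traces of $v_{\mu}$ and of $T_k(v_{\mu})$ vanish) yields
\begin{align*}
\tfrac{1}{2}\|v_{\mu}(t)\|_{L^2(D)}^2-\tfrac{1}{2}\|v_{\mu}(0)\|_{L^2(D)}^2 &= -\int_0^t\!\!\int_D \alpha_{\mu}\cdot\nabla_x v_{\mu}\diff x\diff s,\\
\int_D G_k(v_{\mu}(t))\diff x-\int_D G_k(v_{\mu}(0))\diff x &= -\int_0^t\!\!\int_D \alpha_{\mu}\cdot\nabla_x T_k(v_{\mu})\diff x\diff s.
\end{align*}

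To conclude I would pass $\mu\to 0^+$. The boundary terms on the left converge thanks to $v\in C([0,T];L^2(D))$ and the Lipschitz continuity of $G_k$. For the right-hand sides, the key claim is the pair of modular convergences $\nabla_x v_{\mu}\xrightarrow{M}\nabla_x v$ in $L_M(Q_T)$ and $\alpha_{\mu}\xrightarrow{M^*}\alpha$ in $L_{M^*}(Q_T)$; Proposition \ref{prop:conv_of_product_in_musielaki} then delivers strong $L^1$-convergence $\alpha_{\mu}\cdot\nabla_x v_{\mu}\to\alpha\cdot\nabla_x v$. For the truncated identity one further uses $\nabla_x T_k(v_{\mu})=T'_k(v_{\mu})\nabla_x v_{\mu}$ together with $0\le T'_k\le 1$ and a.e.\ convergence $v_{\mu}\to v$, so the modular convergence of $\nabla_x v_{\mu}$ transfers to that of $\nabla_x T_k(v_{\mu})$ via the Vitali-type criterion of Proposition \ref{prop:vitali_in_musielaki}.

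The delicate step is the modular convergence of the time mollifications, because Assumption \ref{ass:N_func_M} controls the oscillation of $M$ only in the spatial variable, whereas by design $M$ may genuinely jump in $t$. The argument must rest on the convexity of $M(t,x,\cdot)$ rather than on any continuity in time: Jensen's inequality gives, for a.e.\ $(t,x)$,
$$
M\!\left(t,x,\tfrac{\nabla_x v_{\mu}(t,x)}{\lambda}\right)\le\int\eta_{\mu}(s)\,M\!\left(t,x,\tfrac{\nabla_x v(t-s,x)}{\lambda}\right)\diff s,
$$
and combining this with Lebesgue-point/Fubini arguments applied to the integrable function $(t,x)\mapsto M(t,x,\nabla_x v(t,x)/\lambda)$ delivers both the a.e.\ convergence $\nabla_x v_{\mu}\to\nabla_x v$ and the uniform equiintegrability required by Proposition \ref{prop:vitali_in_musielaki}. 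The analogous reasoning for $M^*$ handles $\alpha_{\mu}$, completing the limit passage.
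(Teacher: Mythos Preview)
Your time-mollification strategy has a genuine gap at exactly the point you flag as delicate. The Jensen bound
\[
M\!\left(t,x,\tfrac{\nabla_x v_{\mu}(t,x)}{\lambda}\right)\le\int\eta_{\mu}(s)\,M\!\left(t,x,\tfrac{\nabla_x v(t-s,x)}{\lambda}\right)\diff s
\]
does \emph{not} yield uniform equiintegrability, because the right-hand side couples $M$ at time $t$ with $\nabla_x v$ at time $t-s$, and knowing $\int_{Q_T}M(t,x,\nabla_x v(t,x)/\lambda)<\infty$ says nothing about $\int_{Q_T}M(t,x,\nabla_x v(t-\sigma,x)/\lambda)$ when $M$ jumps in $t$. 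A concrete obstruction: take $M(t,x,\xi)=|\xi|^{2}$ for $t<1/2$ and $|\xi|^{4}$ for $t\ge 1/2$, and let $\nabla_x v(t,\cdot)=f\in L^2(D)\setminus L^4(D)$ for $t<1/2$, $\nabla_x v(t,\cdot)=0$ for $t\ge 1/2$. Then $\nabla_x v\in L_M(Q_T)$, but for $t$ slightly above $1/2$ one has $\nabla_x v_{\mu}(t,\cdot)\approx \tfrac12 f$, so $\int_{1/2}^{1/2+\mu}\int_D M(t,x,(\nabla_x v_{\mu}-\nabla_x v)/\lambda)=\int_{1/2}^{1/2+\mu}\int_D |f/(2\lambda)|^4=\infty$ for every $\mu,\lambda>0$. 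Modular convergence $\nabla_x v_{\mu}\xrightarrow{M}\nabla_x v$ therefore fails outright; the ``Lebesgue-point/Fubini'' sentence cannot be completed.

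The paper does not reprove this proposition but cites \cite{bulicek2021parabolic}; however, the machinery it imports from that reference (Propositions~\ref{prop:double_mollifier_conv} and \ref{prop:conv_of_localization}) and the parallel argument in Lemma~\ref{lem:trunc_energy_eq} show what the correct route is. One mollifies in the \emph{spatial} variable only, localizes with a cut-off $\psi\in C_c^\infty(D)$, and works with the truncation: the equation becomes $\partial_t v^{\kappa}=\DIV_x\alpha^{\kappa}$ pointwise in $t$, one multiplies by $T_{k,\delta}(v^{\kappa})\psi$ and integrates. The limit $\kappa\to 0$ in the nonlinear term uses Proposition~\ref{prop:double_mollifier_conv}, whose proof exploits precisely Assumption~\ref{ass:N_func_M} on the \emph{spatial} modulus of $M$ together with the $L^\infty$ bound coming from the truncation; the cut-off is removed via Proposition~\ref{prop:conv_of_localization}, then $\delta\to 0$. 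The first (untruncated) identity is obtained afterwards by sending $k\to\infty$ in the second one. In short: the discontinuity of $M$ in time forces the regularization to live entirely in $x$, and the truncation is what makes the spatial approximation result applicable.
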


With the above in mind, we may prove the a priori bounds for $v^\varepsilon$ in $Q_T$.

\begin{lem}\label{lem:apriori_v_epsilon}
    Let $A$ satisfy Assumption \ref{ass:stress_A} with an isotropic N-function $M$ satisfying the Assumption \ref{ass:N_func_M}. Suppose $v^\varepsilon$ solves \eqref{eq:approx_epsilon_v} with $A^\varepsilon$ given by \eqref{def:approx_operator}, and $M(t, x, \xi)\leq m(|\xi|)$. Then
    \begin{enumerate}[label = ($Bv^\varepsilon$\arabic*)]
        \item $\{v^\varepsilon\}_{\varepsilon > 0}$ is bounded in $L^\infty(0, T; L^2(D)))$,
        \item $\left\{\varepsilon\nabla_\xi m\left(\left|\nabla_x\left(v^\varepsilon + \int_0^t h\diff W\right)\right|\right)\cdot \nabla_x\left(v^\varepsilon + \int_0^t h\diff W\right)\right\}_{\varepsilon > 0}$ is bounded in $L^1(Q_T)$, \label{some_bound_Young_fun_on_v_varepsilon}
        \item $\left\{\nabla_x\left(v^\varepsilon + \int_0^t h\diff W\right)\right\}_{\varepsilon > 0}$ is bounded in $L_M(Q_T)$,
        \item $\left\{A\left(t, x, \nabla_x\left(v^\varepsilon + \int_0^t h\diff W\right)\right)\right\}_{\varepsilon > 0}$ is bounded in $L_{M^*}(Q_T)$.\label{some_bound_on_A_v_varepsilon}
    \end{enumerate}
\end{lem}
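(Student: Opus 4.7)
I would apply the deterministic energy equality of Proposition \ref{prop:deterministic_energy_theo} to $v^\varepsilon$ for almost every $\omega$. The regularity hypotheses of that proposition are met pointwise in $\omega$ on a full-measure set: combining Remark \ref{rem:on_u_and_A_varepsilon_m_behaviour} (which furnishes $\nabla_x u^\varepsilon\in L_m(\Omega\times Q_T)$ and $A^\varepsilon(t,x,\nabla_x u^\varepsilon)\in L_{m^*}(\Omega\times Q_T)$) with Lemma \ref{lem:stoch_integral_in_musielak_space} (giving $\nabla_x\int_0^\cdot h\diff W\in E_m(\Omega\times Q_T)$) and Fubini, one fixes such an $\omega$ so that the required integrability holds on $Q_T$ for a countable dense set of $\varepsilon$. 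With the shorthand $\xi := \nabla_x v^\varepsilon + \nabla_x\int_0^t h\diff W = \nabla_x u^\varepsilon$ and $\eta := \nabla_x\int_0^t h\diff W$, Proposition \ref{prop:deterministic_energy_theo} applied to $\alpha := A^\varepsilon(t,x,\xi)$, together with the decomposition $\nabla_x v^\varepsilon = \xi - \eta$, yields
\[
\tfrac{1}{2}\|v^\varepsilon(t)\|_{L^2(D)}^2 + \int_0^t\!\int_D A^\varepsilon(s,x,\xi)\cdot\xi\,\diff x\diff s = \tfrac{1}{2}\|v_0\|_{L^2(D)}^2 + \int_0^t\!\int_D A^\varepsilon(s,x,\xi)\cdot\eta\,\diff x\diff s.
\]

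On the left-hand side I would use the coercivity \ref{ass:on_A_2} to extract $\tfrac{1}{c}(M(s,x,\xi) + M^*(s,x,A(s,x,\xi)))$ from $A(s,x,\xi)\cdot\xi$, while retaining the nonnegative term $\varepsilon\nabla_\xi m(|\xi|)\cdot\xi$ needed for (Bv$^\varepsilon$2). On the right-hand side, I would split $A^\varepsilon\cdot\eta = A\cdot\eta + \varepsilon\nabla_\xi m(|\xi|)\cdot\eta$ and apply scaled Fenchel--Young to both pieces. For the first, $A\cdot\eta = (\kappa A)\cdot(\eta/\kappa)\leq\kappa M^*(s,x,A) + M(s,x,\eta/\kappa)$ for $\kappa\in(0,1)$, using convexity of $M^*(t,x,\cdot)$ with $M^*(t,x,0)=0$ to pull $\kappa$ out of $M^*$. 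For the second, the same scaling together with the Fenchel identity $m^*(\nabla_\xi m(|\xi|))=\nabla_\xi m(|\xi|)\cdot\xi - m(|\xi|)$ yields $\varepsilon\nabla_\xi m(|\xi|)\cdot\eta\leq\tfrac{\varepsilon}{2}\nabla_\xi m(|\xi|)\cdot\xi + m(2|\eta|)$ for $\varepsilon$ sufficiently small.

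Choosing $\kappa = 1/(2c)$ to absorb $\kappa\int M^*(A)$ into the left-hand side and absorbing the $\tfrac{\varepsilon}{2}\int\nabla_\xi m\cdot\xi$ term similarly produces the $\varepsilon$-uniform estimate
\[
\tfrac{1}{2}\|v^\varepsilon(t)\|_{L^2(D)}^2 + \tfrac{1}{c}\!\int_0^t\!\int_D M(s,x,\xi) + \tfrac{1}{2c}\!\int_0^t\!\int_D M^*(s,x,A) + \tfrac{\varepsilon}{2}\!\int_0^t\!\int_D\nabla_\xi m(|\xi|)\cdot\xi \;\leq\; C(\omega),
\]
where $C(\omega)$ collects $\|v_0\|_{L^2(D)}^2$, $T\|g\|_\infty$ and the $M$- and $m$-modulars of $\eta$, all finite for a.e.\ $\omega$ thanks to $\eta\in E_m(Q_T)$. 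Taking the supremum in $t\in[0,T]$ delivers (Bv$^\varepsilon$1)--(Bv$^\varepsilon$4) simultaneously. The principal technical obstacle is the scaled Fenchel--Young absorption in the Musielak--Orlicz framework without a $\Delta_2$-assumption: it requires finiteness of $\int M(\cdot,\cdot,\eta/\kappa)$ for arbitrarily small $\kappa$ and of $\int m(2|\eta|)$, which is precisely why the stronger $E_m$-membership from Lemma \ref{lem:stoch_integral_in_musielak_space} (and hence Assumption \ref{ass:growth_Young}) is indispensable.
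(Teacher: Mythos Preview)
Your proposal is correct and follows essentially the same route as the paper: apply the deterministic energy equality of Proposition~\ref{prop:deterministic_energy_theo} to $v^\varepsilon$, split $A^\varepsilon\cdot\eta$ into the $A$-part and the $\varepsilon\nabla_\xi m$-part, and absorb each via a scaled Fenchel--Young inequality together with the identity $\nabla_\xi m(|\xi|)\cdot\xi = m(|\xi|)+m^*(\nabla_\xi m(|\xi|))$.

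One small point of divergence: to bound the right-hand side the paper invokes Remark~\ref{rem:L_infty_behaviour_of_h_approx}, which yields $\nabla_x\int_0^\cdot h\,\diff W\in L^\infty(Q_T)$ almost surely (recall that at this stage $h$ is an elementary process in $\mathcal{S}^2_W(0,T;W^{s,2}_0(D))$ with essentially bounded gradient). Hence $\int_{Q_T} M(t,x,2c\,\eta)$ and $\int_{Q_T} m(2|\eta|)$ are trivially finite for a.e.\ $\omega$, and neither Lemma~\ref{lem:stoch_integral_in_musielak_space} nor Assumption~\ref{ass:growth_Young} is needed here. Your assertion that the $E_m$-membership is ``indispensable'' for this lemma is therefore slightly off; that stronger information becomes essential only later, when upgrading the pointwise-in-$\omega$ convergences to $\Omega\times Q_T$ via dominated convergence (see Lemma~\ref{lem:modular_conv_whole_prob_time_space_v_varepsilon} and the lemma preceding it).
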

\begin{proof}
    Notice that from Remark \ref{rem:on_u_and_A_varepsilon_m_behaviour} we know that the operator
    $$
    \alpha(t, x) = A^\varepsilon\left(t, x, \nabla_x v^\varepsilon + \nabla_x\int_0^t h\diff W\right),
    $$
    is in $L_{m^*}(Q_T)$, thus by Proposition \ref{prop:deterministic_energy_theo}, we know that the solution $v^\varepsilon$ to the equation \eqref{eq:approx_epsilon_v} satisfies the energy equality
    \begin{align}\label{eq:energy_v_epsilon}
        \frac{1}{2}\frac{d}{dt}\|v^\varepsilon\|^2_{L^2(D)} + \int_D A^\varepsilon\left(t, x, \nabla_x v^\varepsilon + \nabla_x\int_0^t h\diff W\right)\cdot\nabla_x v^\varepsilon\diff x = 0.
    \end{align}
    Hence, by the coercivity condition \ref{ass:on_A_2}, the Fenchel--Young inequality and \eqref{eq:energy_v_epsilon}
    \begin{align*}
        &\frac{c}{2}\frac{d}{dt}\|v^\varepsilon\|^2_{L^2(D)} + c\varepsilon\int_D \nabla_\xi m\left(\left|\nabla_x\left(v^\varepsilon + \int_0^t h\diff W\right)\right|\right)\cdot \nabla_x\left(v^\varepsilon + \int_0^t h\diff W\right)\diff x\\
        &+ \int_D M\left(t, x, \nabla_x\left(v^\varepsilon + \int_0^t h\diff W\right)\right)\diff x
        + \int_D M^*\left(t, x, A\left(t, x, \nabla_x\left(v^\varepsilon + \int_0^t h\diff W\right)\right)\right)\diff x \\
        &\phantom{=}\leq\frac{c}{2}\frac{d}{dt}\|v^\varepsilon\|^2_{L^2(D)} + c\varepsilon\int_D \nabla_\xi m\left(\left|\nabla_x\left(v^\varepsilon + \int_0^t h\diff W\right)\right|\right)\cdot \nabla_x\left(v^\varepsilon + \int_0^t h\diff W\right)\diff x + \|g\|_{\infty} \\
        &\qquad\qquad + c\int_D A\left(t, x, \nabla_x\left(v^\varepsilon + \int_0^t h\diff W\right)\right)\cdot \nabla_x\left(v^\varepsilon + \int_0^t h\diff W\right)\diff x\\
        & \phantom{=}= \|g\|_{\infty} + c\int_D A^\varepsilon\left(t, x, \nabla_x\left(v^\varepsilon + \int_0^t h\diff W\right)\right)\cdot\nabla_x\int_0^t h\diff W\diff x \\
        & \phantom{=}\leq \|g\|_{\infty} + c\varepsilon\int_D\frac{1}{2} m^*\left(\nabla_\xi m\left(\left|\nabla_x\left(v^\varepsilon + \int_0^t h\diff W\right)\right|\right)\right)\diff x + c\varepsilon\int_D \frac{1}{2}m\left(2\left|\nabla_x\int_0^t h\diff W\right|\right)\diff x +\\
        &\qquad\qquad + \int_D \frac{1}{2}M^*\left(t, x, A\left(t, x, \nabla_x\left(v^\varepsilon + \int_0^t h\diff W\right)\right)\right)\diff x + \int_D \frac{1}{2}M\left(t, x, 2c\,\nabla_x\int_0^t h\diff W\right)\diff x .
    \end{align*}
Using the equality
    $$
    \nabla_\xi m(|\xi|)\cdot\xi = m(|\xi|) + m^*(\nabla_\xi m(|\xi|))
    $$
    we arrive at
    \begin{equation}\label{ineq:energy_ineq_for_v_varepsilon}
    \begin{split}
        &\frac{c}{2}\frac{d}{dt}\|v^\varepsilon\|^2_{L^2(D)} + \frac{c\varepsilon}{2}\int_D \nabla_\xi m\left(\left|\nabla_x\left(v^\varepsilon + \int_0^t h\diff W\right)\right|\right)\cdot \nabla_x\left(v^\varepsilon + \int_0^t h\diff W\right)\diff x\\
        &+ \frac{1}{2}\int_D M\left(t, x, \nabla_x\left(v^\varepsilon + \int_0^t h\diff W\right)\right)\diff x
        + \frac{1}{2}\int_D M^*\left(t, x, A\left(t, x, \nabla_x\left(v^\varepsilon + \int_0^t h\diff W\right)\right)\right)\diff x \\
        &\phantom{=}\lesssim \|g\|_{\infty} + c\varepsilon\int_D m\left(2\left|\nabla_x\int_0^t h\diff W\right|\right)\diff x + \int_D M\left(t, x, 2c\,\nabla_x\int_0^t h\diff W\right)\diff x,
    \end{split}
    \end{equation}
    which by Remark \ref{rem:L_infty_behaviour_of_h_approx} gives our thesis.
\end{proof}

\subsection{Convergence for the petrurbation of the operator}

We start with a result of convergence from \cite{bulicek2021parabolic}, which is a consequence of \ref{some_bound_Young_fun_on_v_varepsilon}.

\begin{lem}\label{lem:convergence_to_zero_with_approx}(\cite[Lemma 2.9]{bulicek2021parabolic})
    Suppose $v^\varepsilon$ is as in Lemma \ref{lem:apriori_v_epsilon}. Then, for any $\phi\in L^\infty(Q_T)$
    \begin{align*}
        \varepsilon\int_{Q_T}\nabla_\xi m\left(\left|\nabla_x \left(v^\varepsilon + \int_0^t h\diff W\right)\right|\right)\cdot\phi\diff x\diff t \rightarrow 0 \quad \text{ as }\varepsilon\to 0.
    \end{align*}
\end{lem}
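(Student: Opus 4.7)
The plan is to exploit the uniform $L^1$ bound from Lemma \ref{lem:apriori_v_epsilon}\ref{some_bound_Young_fun_on_v_varepsilon}: since $m$ is a Young function, the Fenchel--Young identity
\[
m(|\xi|) + m^*(|\nabla_\xi m(|\xi|)|) = \nabla_\xi m(|\xi|)\cdot \xi
\]
converts the bound on $\varepsilon\,\nabla_\xi m(|\xi^\varepsilon|)\cdot\xi^\varepsilon$ into a bound on $\varepsilon\,m^*(|\nabla_\xi m(|\xi^\varepsilon|)|)$ in $L^1(Q_T)$, where $\xi^\varepsilon := \nabla_x\bigl(v^\varepsilon + \int_0^t h\,\diff W\bigr)$. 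Pairing this with a \emph{rescaled} Young inequality will absorb the test function $\phi$, and a standard two-step limit ($\varepsilon\to 0$ first, then the rescaling parameter $\lambda\to\infty$) will close the estimate.

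Concretely, I would first integrate the Fenchel--Young identity over $Q_T$, multiply by $\varepsilon$, and discard the nonnegative $\varepsilon\int M(\cdots)$ term to obtain
\[
\varepsilon\int_{Q_T} m^*\bigl(|\nabla_\xi m(|\xi^\varepsilon|)|\bigr)\,\diff x\,\diff t \;\leq\; \varepsilon\int_{Q_T}\nabla_\xi m(|\xi^\varepsilon|)\cdot \xi^\varepsilon\,\diff x\,\diff t \;\leq\; C,
\]
uniformly in $\varepsilon$, where the last inequality is exactly Lemma \ref{lem:apriori_v_epsilon}\ref{some_bound_Young_fun_on_v_varepsilon}.

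Next, for arbitrary $\lambda>0$ I apply Young's inequality $a b \leq m^*(a) + m(b)$ to $a = |\nabla_\xi m(|\xi^\varepsilon|)|$ and $b = \lambda|\phi|$, which yields pointwise
\[
|\nabla_\xi m(|\xi^\varepsilon|)\cdot\phi| \;\leq\; \frac{1}{\lambda}\,m^*\bigl(|\nabla_\xi m(|\xi^\varepsilon|)|\bigr) + \frac{1}{\lambda}\,m(\lambda|\phi|).
\]
Multiplying by $\varepsilon$ and integrating, the first term contributes at most $C/\lambda$ by the estimate above, while the second term is bounded by $\tfrac{\varepsilon}{\lambda}\,m(\lambda\|\phi\|_\infty)|Q_T|$, which vanishes as $\varepsilon\to 0$ for fixed $\lambda$. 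Consequently,
\[
\limsup_{\varepsilon\to 0}\left|\varepsilon\int_{Q_T}\nabla_\xi m(|\xi^\varepsilon|)\cdot\phi\,\diff x\,\diff t\right| \;\leq\; \frac{C}{\lambda},
\]
and letting $\lambda\to\infty$ concludes the proof.

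The argument is essentially routine; the only delicate point is the order of quantifiers. Because $m$ may grow very fast (Assumption \ref{ass:growth_Young} only limits it by a super-polynomial-type bound), we cannot let $\lambda$ depend on $\varepsilon$ in a naive way, so one must really send $\varepsilon\to 0$ first to kill the $\varepsilon\,m(\lambda\|\phi\|_\infty)$ factor, and only afterwards $\lambda\to\infty$ to eliminate the leftover $C/\lambda$.
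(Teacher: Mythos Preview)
Your argument is correct and is precisely the standard route (the paper itself gives no proof here, simply citing \cite[Lemma 2.9]{bulicek2021parabolic}, whose proof proceeds along the same lines). One cosmetic slip: the term you discard is $\varepsilon\int m(|\xi^\varepsilon|)$, not $\varepsilon\int M(\cdots)$, since the Fenchel--Young identity you invoke is for the Young function $m$, not the $N$-function $M$.
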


Using the above we may obtain a limiting equation.

\begin{lem}\label{lem:conv_v_varepsilon}
    Suppose $v^\varepsilon$ is as in Lemma \ref{lem:apriori_v_epsilon}. Then, there exists $v\in L^\infty(0, T; L^2(D))$ with $\nabla_x v\in L_M(Q_T)$ and $\chi\in L_{M^*}(Q_T),$ such that (up to the subsequence, which right now possibly depends on $\omega$)
    \begin{align}
        v^\varepsilon &\wstar v &&\text{ weakly* in }L^\infty(0, T; L^2(D)),\label{conv:v_varepsilon_conv}\\
        \nabla_x v^\varepsilon &\wstar \nabla_x v &&\text{ weakly* in }L_M(Q_T),\label{conv:nabla_v_varepsilon_conv}\\
        A\left(t, x, \nabla_x\left(v^\varepsilon + \int_0^t h\diff W\right)\right) &\wstar \chi &&\text{ weakly* in }L_{M^*}(Q_T),\label{conv:stress_conv}\\
        v^\varepsilon & \rightarrow v &&\text{ strongly in }L^1(Q_T). \label{conv:strong_convergence_of_v_varepsilon}
    \end{align}
    Moreover $v$ and $\chi$ given above satisfy the equation
    \begin{align}\label{eq:after_limit_with_v_before_identification_equation}
        \partial_t v - \DIV_x \chi = 0\quad \text{ in }\mathcal{D}'([0, T)\times D).
    \end{align}
        In particular, by Proposition \ref{prop:deterministic_energy_theo} it satisfies the energy equalities
    \begin{align}\label{eq:energy_equality_after_limit}
        \frac{1}{2}\frac{d}{dt}\|v(t)\|_{L^2(D)} + \int_D\chi(t, x)\cdot\nabla_x v(t, x)\diff x= 0,
    \end{align}
    \begin{align}\label{eq:energy_equality_after_limit_trunc}
        \frac{d}{dt}\int_D G_k(v(t))\diff x + \int_D \chi(t, x)\cdot\nabla_x T_k(v(t, x))\diff x = 0.
    \end{align}
\end{lem}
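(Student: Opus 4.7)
I plan to proceed in three short steps: extract weak-$*$ limits from the a priori bounds, upgrade to strong $L^1$ compactness via Aubin--Lions, and then pass to the limit in the distributional formulation of \eqref{eq:approx_epsilon_v}. The energy equalities will then follow by directly invoking Proposition \ref{prop:deterministic_energy_theo}.

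For the weak-$*$ limits \eqref{conv:v_varepsilon_conv}--\eqref{conv:stress_conv}, I apply Banach--Alaoglu to the bounds furnished by Lemma \ref{lem:apriori_v_epsilon}; separability of the preduals $L^1(0,T;L^2(D))$, $E_M(Q_T)$, and $E_{M^*}(Q_T)$ makes the respective closed dual balls weak-$*$ sequentially compact. The extracted subsequences may depend on the fixed $\omega$, which is permissible at this stage. Note that by Remark \ref{rem:L_infty_behaviour_of_h_approx} the gradient $\nabla_x \int_0^\cdot h\,\diff W$ is in $L^\infty(Q_T)$ for our fixed $\omega$, so the $L_M$-bound on $\nabla_x\bigl(v^\varepsilon + \int_0^\cdot h\,\diff W\bigr)$ transfers to a bound on $\nabla_x v^\varepsilon$, giving \eqref{conv:nabla_v_varepsilon_conv}.

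For the strong convergence \eqref{conv:strong_convergence_of_v_varepsilon}, the plan is to observe that $A^\varepsilon\bigl(t,x,\nabla_x(v^\varepsilon + \int_0^\cdot h\,\diff W)\bigr)$ is uniformly $\varepsilon$-bounded in $L^1(Q_T;\R^d)$: the $A$-part by \ref{some_bound_on_A_v_varepsilon} together with the continuous embedding $L_{M^*}\hookrightarrow L^1$, and the perturbation $\varepsilon\nabla_\xi m(|\cdot|)$ by applying Fenchel--Young to the $L^1$-bound in \ref{some_bound_Young_fun_on_v_varepsilon}. Since $\partial_t v^\varepsilon = \DIV_x A^\varepsilon(\cdots)$ in $\mathcal{D}'$, this yields $\partial_t v^\varepsilon$ bounded in $L^1(0,T; W^{-1,1}(D))$. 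Combined with $\nabla_x v^\varepsilon$ bounded in $L^1(Q_T)$ and $v^\varepsilon$ bounded in $L^\infty(0,T;L^2(D))$, the Aubin--Lions--Simon lemma gives relative compactness of $\{v^\varepsilon\}$ in $L^1(Q_T)$, so along a further subsequence $v^\varepsilon \to v$ strongly in $L^1(Q_T)$.

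With the four convergences in hand, passing to the limit in \eqref{eq:approx_epsilon_v} tested against $\phi \in C_c^\infty([0,T)\times D)$ is routine: the term with the $A$-part uses \eqref{conv:stress_conv} paired with $\nabla_x\phi\in E_M(Q_T)$, while the perturbation $\varepsilon \int_{Q_T}\nabla_\xi m\bigl(|\nabla_x(v^\varepsilon + \int_0^\cdot h\,\diff W)|\bigr)\cdot\nabla_x \phi\,\diff x\,\diff t$ vanishes by Lemma \ref{lem:convergence_to_zero_with_approx} applied with the bounded test function $\nabla_x\phi$. This delivers \eqref{eq:after_limit_with_v_before_identification_equation}, and Proposition \ref{prop:deterministic_energy_theo} applied with $\alpha:=\chi$ then yields \eqref{eq:energy_equality_after_limit} and \eqref{eq:energy_equality_after_limit_trunc}. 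The main obstacle I anticipate is the compactness step: one must check carefully that the Fenchel--Young estimate genuinely produces an $\varepsilon$-uniform (and not an $\varepsilon^{-1}$-blowing) $L^1$ bound on $\varepsilon\nabla_\xi m(|\cdot|)$, and that the resulting negative Sobolev bound on $\partial_t v^\varepsilon$ is strong enough to couple with the merely $L^1$ spatial gradient bound in the Aubin--Lions setup.
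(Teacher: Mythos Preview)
Your proposal is correct and follows essentially the same route as the paper: Banach--Alaoglu for the weak-$*$ limits, an Aubin--Lions argument for the strong $L^1$ compactness (the paper places $\partial_t v^\varepsilon$ in $L^1(0,T;W^{-s,2}(D))$ with $s$ large rather than your $W^{-1,1}$, but the mechanism is identical), and then Lemma~\ref{lem:convergence_to_zero_with_approx} together with \eqref{conv:stress_conv} to pass to the limit in \eqref{eq:approx_epsilon_v}. Regarding your stated concern, the uniform $L^1$ bound on $\varepsilon\nabla_\xi m(|\cdot|)$ does follow from \ref{some_bound_Young_fun_on_v_varepsilon} via the Fenchel--Young equality $\nabla_\xi m(|\xi|)\cdot\xi = m(|\xi|)+m^*(|\nabla_\xi m(|\xi|)|)$ and the superlinearity of $m^*$, and Simon's compactness theorem accommodates $L^1$-in-time bounds on both $v^\varepsilon$ and $\partial_t v^\varepsilon$ without issue.
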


\begin{proof}
    We simply apply Lemma \ref{lem:apriori_v_epsilon} together with the Banach--Alaoglu theorem, then we use Lemma \ref{lem:convergence_to_zero_with_approx} to converge in the weak formulation \eqref{eq:approx_epsilon_v} for $v^\varepsilon$. At last to prove the strong convergence of $v^\varepsilon$ note that by Lemma \ref{lem:apriori_v_epsilon} the sequence $\{v^\varepsilon\}_{\varepsilon > 0}$ is bounded in $L^1(0, T; W^{1,1}_0(D))$. Moreover, looking at the equation \eqref{eq:approx_epsilon_v}, we can say using bound \ref{some_bound_on_A_v_varepsilon} and Lemma \ref{lem:convergence_to_zero_with_approx}, and arguing similarly as in \cite[Lemma 2.10 and Lemma 2.11]{bulicek2021parabolic}, that $\{\partial_t v^\varepsilon\}_{\varepsilon > 0}$ is bounded in $L^1(0, T; W^{-s, 2}(D))$, for $s$ big enough such that $W^{s-1, 2}_0(D)\hookrightarrow L^\infty(D)$. Then, as $W^{1,1}_0(D)$ embeds compactly into $L^1(D)$, which in turn, using the mapping
    $$
    L^1(D) \ni f \mapsto T_f\in W^{-s, 2}(D), \quad T_f(\phi) = \int_{D}f(x)\,\phi(x)\diff x,
    $$
    embeds continuously into $W^{-s, 2}(D)$, by a classical Aubin--Lions' lemma we get the thesis.
\end{proof}

Our aim now is to identify $\chi$ with $A\left(t, x, \nabla_x\left(v + \int_0^t h\diff W\right)\right)$, which we will do by a standard monotonicity trick. To this end, we need to compare the energy \eqref{eq:energy_equality_after_limit} with the limit of \eqref{eq:energy_v_epsilon}.

\begin{lem}\label{lem:chi_identification_by_mono_trick}
    Suppose $v^\varepsilon$ is as in Lemma \ref{lem:apriori_v_epsilon} and $v$, $\chi$ are given by Lemma \ref{lem:conv_v_varepsilon}. Then
    $$
    \chi(t, x) = A\left(t, x, \nabla_x\left(v + \int_0^t h\diff W\right)\right) \text{ for a.e. }(t, x)\in Q_T.
    $$
\end{lem}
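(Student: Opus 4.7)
The plan is to apply the Minty monotonicity trick, adapted to the Musielak--Orlicz setting. Writing $\xi^\varepsilon := \nabla_x v^\varepsilon + \nabla_x \int_0^t h\,\diff W(s)$ and $\xi := \nabla_x v + \nabla_x \int_0^t h\,\diff W(s)$, so that $A(\cdot,\cdot,\xi^\varepsilon)\wstar \chi$ in $L_{M^*}(Q_T)$ by \eqref{conv:stress_conv}, the goal is to show $\chi = A(\cdot,\cdot,\xi)$ almost everywhere. The argument splits into three pieces: extracting a limsup energy inequality from the approximate equation, performing a monotonicity test against an $L^\infty$ competitor, and then a modular density step that lets the competitor range over the full pairing.

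For the energy extraction I would integrate \eqref{eq:energy_v_epsilon} up to $t=T$. Using the bound on $\partial_t v^\varepsilon$ in $L^1(0,T;W^{-s,2}(D))$ from the proof of Lemma \ref{lem:conv_v_varepsilon}, together with the $L^\infty(0,T;L^2(D))$ bound, an Aubin--Lions/Arzel\`{a}--Ascoli argument yields $v^\varepsilon(T)\rightharpoonup v(T)$ in $L^2(D)$ (along the subsequence already extracted). By weak lower semicontinuity of the $L^2$-norm combined with \eqref{eq:energy_equality_after_limit} at $t=T$, this gives
\[
\limsup_{\varepsilon \to 0^+} \int_{Q_T} A^\varepsilon(\tau,x,\xi^\varepsilon)\cdot \nabla_x v^\varepsilon\,\diff x\,\diff \tau \;\leq\; \int_{Q_T}\chi\cdot \nabla_x v\,\diff x\,\diff \tau.
\]
Decomposing $\nabla_x v^\varepsilon = \xi^\varepsilon - \nabla_x\int_0^\tau h\,\diff W$ and $A^\varepsilon = A + \varepsilon\nabla_\xi m$, discarding the nonnegative term $\varepsilon\int \nabla_\xi m(|\xi^\varepsilon|)\cdot\xi^\varepsilon$ (by convexity of $m$), treating the remaining $\varepsilon$-piece via Lemma \ref{lem:convergence_to_zero_with_approx} together with $\nabla_x\int_0^\tau h\,\diff W\in L^\infty(Q_T)\subset E_M(Q_T)$ (Remark \ref{rem:L_infty_behaviour_of_h_approx}), and passing to the limit in the cross-term $\int A(\xi^\varepsilon)\cdot\nabla_x\int_0^\tau h\,\diff W$ via \eqref{conv:stress_conv}, I obtain the key estimate
\[
\limsup_{\varepsilon \to 0^+} \int_{Q_T} A(\tau,x,\xi^\varepsilon)\cdot \xi^\varepsilon\,\diff x\,\diff \tau \;\leq\; \int_{Q_T}\chi\cdot \xi\,\diff x\,\diff \tau.
\]

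For the Minty step, fix any $\eta \in L^\infty(Q_T;\R^d)$, so that $A(\tau,x,\eta)\in L^\infty(Q_T)$ by Proposition \ref{prop:stress_bound_for_bounded}. Strict monotonicity \ref{ass:on_A_3} gives $\int_{Q_T}(A(\xi^\varepsilon)-A(\eta))\cdot(\xi^\varepsilon-\eta)\diff x\diff \tau \geq 0$; expanding and using the weak-$\ast$ convergences \eqref{conv:nabla_v_varepsilon_conv} and \eqref{conv:stress_conv} tested against the $L^\infty$ factors $\eta$ and $A(\eta)$ (which sit in the preduals $E_M$, $E_{M^*}$), together with the previous limsup bound, I deduce
\[
\int_{Q_T}(\chi - A(\tau,x,\eta))\cdot(\xi-\eta)\,\diff x\,\diff \tau \;\geq\; 0 \qquad \text{for every } \eta \in L^\infty(Q_T;\R^d).
\]

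To promote the inequality to arbitrary directions I would invoke Proposition \ref{prop:double_mollifier_conv} (whose availability is the whole point of Assumption \ref{ass:N_func_M}) to pick $\xi_n \in L^\infty(Q_T;\R^d)$ with $\xi_n \xrightarrow{M} \xi$ modularly in $L_M(Q_T)$. For arbitrary $\phi \in L^\infty(Q_T;\R^d)$ and $\lambda > 0$, I set $\eta = \xi_n - \lambda\phi$ and pass $n\to\infty$ using Proposition \ref{prop:conv_of_product_in_musielaki}, together with convergence in measure of $A(\xi_n - \lambda\phi)$ and uniform modular control in $L_{M^*}$ coming from \ref{ass:on_A_2}; dividing by $\lambda$ and sending $\lambda\to 0^+$ via the Carath\'{e}odory continuity of $A$ and a dominated convergence argument in $L_{M^*}$ then yields $\int_{Q_T}(\chi - A(\tau,x,\xi))\cdot\phi\,\diff x\,\diff \tau \geq 0$ for every $\phi \in L^\infty(Q_T;\R^d)$, whence $\chi = A(\tau,x,\xi)$ almost everywhere. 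The main obstacle is precisely this modular density step: ensuring that both $\xi_n - \lambda\phi$ modularly approaches $\xi - \lambda\phi$ in $L_M$ \emph{and} that the nonlinearly composed sequence $A(\tau,x,\xi_n - \lambda\phi)$ converges modularly in $L_{M^*}$ rather than merely in measure. This is the technical heart distinguishing the Musielak--Orlicz framework from the reflexive $L^p$ setting, and is where the specific form of Assumption \ref{ass:N_func_M} (via Proposition \ref{prop:double_mollifier_conv}) becomes indispensable.
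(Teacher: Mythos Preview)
Your first two steps—the limsup energy inequality and the Minty test against $\eta\in L^\infty(Q_T;\R^d)$—match the paper's argument (the paper shifts the competitor by $\nabla_x\int_0^t h\,\diff W$, writing $\eta+\nabla_x\int_0^t h\,\diff W$ in place of your $\eta$, but since that shift is bounded this is only cosmetic).

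The discrepancy is in your third step. After arriving at
\[
\int_{Q_T}\bigl(\chi - A(\tau,x,\eta)\bigr)\cdot(\xi-\eta)\,\diff x\,\diff \tau \;\geq\; 0 \qquad\text{for all }\eta\in L^\infty(Q_T;\R^d),
\]
the paper simply invokes Lemma~\ref{res:monot_trick} and is done. You instead try to carry out the identification by hand via a modular density step, and this runs into two problems. First, Proposition~\ref{prop:double_mollifier_conv} does not deliver the $\xi_n\xrightarrow{M}\xi$ you claim: it approximates objects of the specific form $\nabla_x(T_k(u)\psi)$ with a truncation and a cutoff, not a general element of $L_M(Q_T)$, so you would still owe separate limits in $k$ and $\psi$. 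Second, your diagnosis that Assumption~\ref{ass:N_func_M} is ``indispensable'' at this point is off the mark. Lemma~\ref{res:monot_trick} requires only that $M$ be an $N$-function and that $A$ satisfy Assumption~\ref{ass:stress_A}; its standard proof takes $\eta$ of the form $\xi\mathbf{1}_{\{|\xi|\leq k\}}+\lambda\phi$ and passes $k\to\infty$, $\lambda\to 0^+$ using Carath\'{e}odory continuity and the growth bound, with no modular density or log-H\"older-type hypothesis on $M$ needed. Assumption~\ref{ass:N_func_M} enters the argument through Proposition~\ref{prop:deterministic_energy_theo} (the energy equalities you already used in the first step), not here.
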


\begin{proof}
 We may write \eqref{eq:energy_v_epsilon} equivalently
\begin{equation}\label{eq:some_equality_in_conv_with_epsilon_1}
\begin{split}
    \frac{1}{2}\frac{d}{dt}\|v^\varepsilon\|^2_{L^2(D)} &+ \int_D A^\varepsilon\left(t, x, \nabla_x v^\varepsilon + \nabla_x\int_0^t h\diff W\right)\cdot\nabla_x\left( v^\varepsilon + \int_0^t h\diff W\right)\diff x\\
    &= \int_D A^\varepsilon\left(t, x, \nabla_x v^\varepsilon + \nabla_x\int_0^t h\diff W\right)\cdot\nabla_x\int_0^t h\diff W\diff x.
\end{split}
\end{equation}
As by Remark \ref{rem:L_infty_behaviour_of_h_approx} $\nabla_x \int_0^t h\diff W\in L^\infty(Q_T)$ we can use Lemma \ref{lem:convergence_to_zero_with_approx} and \eqref{conv:stress_conv} as well as the weak lower semicontinuity of the $L^2$ norm in \eqref{eq:some_equality_in_conv_with_epsilon_1} to obtain
\begin{multline}\label{ineq:limit_energy_ineq_for_strong_conv}
    \frac{1}{2}\|v(t)\|^2_{L^2(D)} + \limsup_{\varepsilon\to 0}\int_{Q_t} A\left(s, x, \nabla_x v^\varepsilon + \nabla_x\int_0^s h\diff W\right)\cdot\nabla_x\left( v^\varepsilon + \int_0^s h\diff W\right)\diff x\diff s\\
    \leq \int_{Q_t} \chi\cdot\nabla_x\int_0^s h\diff W\diff x\diff s + \frac{1}{2}\|u_0\|_{L^2(D)}.
\end{multline}
With $Q_t:=(0,t)\times D$ for any $t\in 0,T]$. Combining the above with \eqref{eq:energy_equality_after_limit} we arrive at
\begin{multline}\label{ineq:some_ineq_epsilon_mono_trick}
    \limsup_{\varepsilon\to 0}\int_{Q_t} A\left(s, x, \nabla_x v^\varepsilon + \nabla_x\int_0^s h\diff W\right)\cdot\nabla_x\left( v^\varepsilon + \int_0^s h\diff W\right)\diff x\diff s\\
    \leq \int_{Q_t} \chi\cdot\nabla_x\left(v + \int_0^s h\diff W\right)\diff x\diff s.
\end{multline}
With this in mind, we conclude in a typical manner. By the monotonicity assumption \ref{ass:on_A_3} for any $\eta\in L^\infty(Q_T)^d$
\begin{align*}
    \int_0^T\int_D \left(A\left(t, x, \nabla_x v^\varepsilon + \nabla_x\int_0^t h\diff W\right) - A\left(t, x, \eta + \nabla_x\int_0^t h\diff W\right)\right)(\nabla_x v^\varepsilon - \eta)\diff x\diff t \geq 0.
\end{align*}
Using \eqref{conv:nabla_v_varepsilon_conv}, \eqref{conv:stress_conv}, Proposition \ref{prop:stress_bound_for_bounded} and \eqref{ineq:some_ineq_epsilon_mono_trick} we conclude
\begin{align*}
    \int_0^T\int_D \left(\chi - A\left(t, x, \eta + \nabla_x\int_0^t h\diff W\right)\right)(\nabla_x v - \eta)\diff x\diff t \geq 0,
\end{align*}
thus by the monotonicity trick, see Lemma~\ref{res:monot_trick}, $\chi = A\left(t, x, \nabla_x v + \nabla_x\int_0^t h\diff W\right)$ a.e. in $Q_T$. 
\end{proof}

\begin{cor}

Note that we can now identify $\chi$ in \eqref{eq:after_limit_with_v_before_identification_equation} and by Proposition \ref{prop:deterministic_energy_theo} with the use of the energy equality \eqref{eq:energy_equality_after_limit} we deduce that $v$ is unique as a solution. Hence, the classical subsequence argument allows us to see that in Lemma \ref{lem:conv_v_varepsilon} the whole sequence converges.
    
\end{cor}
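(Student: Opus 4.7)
The plan is to chain together three ingredients — the identification of $\chi$ from Lemma~\ref{lem:chi_identification_by_mono_trick}, the deterministic energy equality in Proposition~\ref{prop:deterministic_energy_theo}, and the strict monotonicity of $A$ from \ref{ass:on_A_3} — and then to close with the standard subsequence principle. First I would substitute $\chi = A(t, x, \nabla_x v + \nabla_x \int_0^t h \diff W)$ into \eqref{eq:after_limit_with_v_before_identification_equation} to obtain the limit equation
\[
\partial_t v - \DIV_x A\left(t, x, \nabla_x v + \nabla_x \int_0^t h \diff W\right) = 0 \quad \text{in }\mathcal{D}'([0, T) \times D),
\]
with initial datum $v(0) = u_0(\omega)$.

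Uniqueness I would then show by a monotonicity argument: given two solutions $v_1, v_2$ in the class $L^\infty(0,T;L^2(D))$ with $\nabla_x v_i \in L_M(Q_T)$ and the same initial datum, their difference $w := v_1 - v_2$ satisfies $\partial_t w = \DIV_x \alpha$ in the sense of distributions, where $\alpha$ is the difference of the two fluxes. One verifies $\alpha \in L_{M^*}(Q_T)$ using the coercivity/growth bound \ref{ass:on_A_2} together with the $L^\infty$ regularity of $\nabla_x \int_0^t h \diff W$ recorded in Remark~\ref{rem:L_infty_behaviour_of_h_approx}. Proposition~\ref{prop:deterministic_energy_theo} then applies to $w$, yielding
\[
\tfrac{1}{2}\|w(t)\|_{L^2(D)}^2 = -\int_0^t\!\!\int_D \alpha \cdot \nabla_x w \diff x \diff s,
\]
and rewriting $\nabla_x w$ as the difference of the two arguments of $A$ makes the integrand pointwise nonnegative by \ref{ass:on_A_3}; hence $w \equiv 0$.

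With uniqueness in hand, I would invoke the classical subsequence principle to deal with the $\omega$-dependence of the extractions in Lemma~\ref{lem:conv_v_varepsilon}. For each $\omega \in \Omega'$, every subsequence of $\{v^\varepsilon(\omega)\}$ admits a further subsequence along which \eqref{conv:v_varepsilon_conv}--\eqref{conv:strong_convergence_of_v_varepsilon} hold and along which the limit solves the same limiting equation with datum $u_0(\omega)$; uniqueness forces all these limits to coincide, so the entire sequence $\{v^\varepsilon(\omega)\}$ converges to this common $v$ in each of the four senses listed, independently of any $\omega$-dependent subsequence choice. The main (mild) obstacle I anticipate is verifying the hypotheses of Proposition~\ref{prop:deterministic_energy_theo} for $w$ — in particular, that $w$ lies in $\mathcal{C}([0,T];L^2(D)) \cap L^1(0,T;W^{1,1}_0(D))$ and that $\alpha$ really belongs to $L_{M^*}(Q_T)$. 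Both follow from the a priori bounds already established in Lemma~\ref{lem:apriori_v_epsilon} and the stochastic-integral regularity of Remark~\ref{rem:L_infty_behaviour_of_h_approx}, so this is a bookkeeping step rather than a genuinely new difficulty.
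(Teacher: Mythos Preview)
Your proposal is correct and follows exactly the route the paper has in mind: the corollary in the paper carries no separate proof beyond its own statement, and your argument---apply Proposition~\ref{prop:deterministic_energy_theo} to the difference $w=v_1-v_2$, use strict monotonicity \ref{ass:on_A_3} to force $\|w(t)\|_{L^2(D)}^2\leq 0$, then invoke the subsequence principle---is precisely the intended elaboration. One small imprecision: the regularity of $w$ and of $\alpha$ needed for Proposition~\ref{prop:deterministic_energy_theo} should be attributed to the solution class in which $v_1,v_2$ are assumed to lie (namely $C([0,T];L^2(D))\cap L^1(0,T;W^{1,1}_0(D))$ with gradient in $L_M(Q_T)$ and flux in $L_{M^*}(Q_T)$), not to Lemma~\ref{lem:apriori_v_epsilon}, which concerns the approximants $v^\varepsilon$ rather than arbitrary solutions of the limit equation.
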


At this point, we basically have all the needed tools to show the existence of solutions to our equations, as long as the function $h$ is nice enough. We will need to converge with the approximation of $h$ at some point and thus, we will need an energy inequality. Therefore we need to upgrade the convergence of $\nabla_x v^\varepsilon$.

\begin{lem}\label{lem:almost_everywhere_conv_of_nabla_v_varepsilon}
    Suppose $v^\varepsilon$ is as in Lemma \ref{lem:apriori_v_epsilon} and $v$ is given in Lemma \ref{lem:conv_v_varepsilon}. Then, up to the subsequence possibly dependant on $\omega$,
    $$
    \nabla_x v^\varepsilon \rightarrow \nabla_x v \text{ a.e. in }Q_T.
    $$
\end{lem}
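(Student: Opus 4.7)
The strategy is the classical Minty-type monotonicity argument, adapted to the Musielak--Orlicz framework. Set
\[
W^\varepsilon := \nabla_x v^\varepsilon + \nabla_x\int_0^t h\diff W, \qquad W := \nabla_x v + \nabla_x\int_0^t h\diff W,
\]
so that Lemma~\ref{lem:chi_identification_by_mono_trick} gives $\chi = A(t,x,W)$, and strict monotonicity \ref{ass:on_A_3} makes
\[
X^\varepsilon(t,x) := \bigl(A(t,x,W^\varepsilon) - A(t,x,W)\bigr)\cdot\bigl(W^\varepsilon - W\bigr)
\]
nonnegative, vanishing only where $W^\varepsilon = W$. The plan is to prove $X^\varepsilon \to 0$ in $L^1(Q_T)$ and then deduce $W^\varepsilon \to W$ a.e.\ along a subsequence via strict monotonicity and continuity of $A(t,x,\cdot)$; since $\nabla_x \int_0^t h\diff W$ does not depend on $\varepsilon$, this is equivalent to $\nabla_x v^\varepsilon \to \nabla_x v$ a.e.

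For the $L^1$-convergence I expand
\[
\int_{Q_T}X^\varepsilon = \int_{Q_T}A(W^\varepsilon)\cdot W^\varepsilon - \int_{Q_T}A(W^\varepsilon)\cdot W - \int_{Q_T}A(W)\cdot W^\varepsilon + \int_{Q_T}A(W)\cdot W.
\]
The $\limsup$ of the first term is controlled by $\int_{Q_T}A(W)\cdot W$ via inequality \eqref{ineq:some_ineq_epsilon_mono_trick} combined with the identification $\chi = A(t,x,W)$. The two cross terms are handled by the weak--$\ast$ convergences \eqref{conv:nabla_v_varepsilon_conv} and \eqref{conv:stress_conv}; this is the technically delicate step, since weak--$\ast$ convergence in $L_{M^*}$ pairs only with $E_M$ test functions, whereas $W$ and $A(W)$ a priori lie only in $L_M$ and $L_{M^*}$, respectively. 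I would circumvent this by decomposing $W = \nabla_x v + \nabla_x\int_0^t h\diff W$: the stochastic part is essentially bounded by Remark~\ref{rem:L_infty_behaviour_of_h_approx}, hence automatically in $E_M\cap E_{M^*}$ and admissible as a test function, while $\nabla_x v$ is approximated by its bounded truncations $\nabla_x v\,\mathbf{1}_{\{|\nabla_x v|\le k\}}$; one first passes to the limit $\varepsilon\to 0$ against the bounded truncation, and then sends $k\to\infty$, controlling the tail uniformly in $\varepsilon$ by the Fenchel--Young inequality together with the modular bounds of Lemma~\ref{lem:apriori_v_epsilon} and the fact that $|\{|\nabla_x v|>k\}|\to 0$. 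The symmetric cross term is treated analogously. Putting everything together yields $\limsup_{\varepsilon\to 0}\int_{Q_T}X^\varepsilon \le 0$, hence $X^\varepsilon\to 0$ in $L^1(Q_T)$ by nonnegativity.

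Once $X^\varepsilon \to 0$ in $L^1(Q_T)$, a subsequence (possibly depending on $\omega$) satisfies $X^{\varepsilon_n}\to 0$ a.e. The uniform modular bound $\int_{Q_T}M(t,x,W^{\varepsilon_n})\diff x\diff t \le C$ from Lemma~\ref{lem:apriori_v_epsilon} together with Chebyshev's inequality yields, up to a further subsequence, pointwise boundedness of $W^{\varepsilon_n}(t,x)$ for a.e.\ $(t,x)$; at every such point each cluster point $\xi^\ast$ of $\{W^{\varepsilon_n}(t,x)\}$ satisfies $(A(t,x,\xi^\ast) - A(t,x,W))\cdot(\xi^\ast - W) = 0$ by continuity of $A(t,x,\cdot)$, and strict monotonicity forces $\xi^\ast = W(t,x)$. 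I expect the main obstacle to be precisely the control of the cross terms: exploiting the coercivity \ref{ass:on_A_2} to obtain the required uniform tail estimates on $M^*(A(W^\varepsilon))$ is the delicate point which is specific to the Musielak--Orlicz setting and has no counterpart in the classical $L^p$ theory.
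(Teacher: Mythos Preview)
Your overall strategy is the right one in spirit, but the argument as written has a genuine gap in the treatment of the cross terms, and this gap is precisely the obstruction that forces the paper to use a considerably more elaborate route.

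The issue is Step~2 of your truncation argument. You want
\[
\sup_{\varepsilon}\left|\int_{\{|\nabla_x v|>k\}}A(t,x,W^\varepsilon)\cdot\nabla_x v\diff x\diff t\right|\longrightarrow 0\qquad\text{as }k\to\infty,
\]
and you propose to get this from Fenchel--Young together with the modular bound of Lemma~\ref{lem:apriori_v_epsilon}. But Fenchel--Young only gives
\[
\int_{\{|\nabla_x v|>k\}}|A(W^\varepsilon)\cdot\nabla_x v|\le \int_{\{|\nabla_x v|>k\}}M^*(t,x,A(W^\varepsilon))+\int_{\{|\nabla_x v|>k\}}M(t,x,\nabla_x v),
\]
and while the second integral tends to~$0$, for the first you have only an $L^1$ \emph{bound} on $\{M^*(t,x,A(W^\varepsilon))\}_\varepsilon$, not equiintegrability. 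An $L^1$ bound together with $|\{|\nabla_x v|>k\}|\to 0$ does not force the integral over that set to be small uniformly in~$\varepsilon$; that implication is exactly the definition of uniform integrability, which you do not have at this stage (indeed, obtaining it is essentially equivalent to the modular convergence proved only \emph{after} this lemma, in Lemma~\ref{lem:modular_conv_v_varepsilon}). The symmetric cross term $\int A(W)\cdot W^\varepsilon$ has the same problem with the roles of $M$ and $M^*$ interchanged.

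This is why the paper's proof (following \cite{elmahi2005parabolic}) does not attempt to pair $A(W^\varepsilon)$ directly with~$W$. Instead it passes to truncations $T_k(w)$ of the \emph{function} (not of the gradient), localizes with $\psi\in C_c^\infty(D)$, approximates $\nabla_x T_k(w)\psi$ modularly by smooth $\nabla_x\phi_j\psi$ via Proposition~\ref{prop:double_mollifier_conv}, and introduces further level-set cutoffs $\chi^s=\mathbf{1}_{\{|\nabla_x w|\le s\}}$. The point of all this machinery is to arrange that in every product at least one factor is essentially bounded, so that either the weak--$\ast$ duality $(E_M)^*=L_{M^*}$ applies or Proposition~\ref{prop:stress_bound_for_bounded} bounds the operator; the remainder terms are then controlled by the modular convergence $\nabla_x\phi_j\psi\to\nabla_x T_k(w)\psi$ and by shrinking tails of the \emph{fixed} function $A(W)\cdot W\in L^1$. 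Only after this does one recover a.e.\ convergence of $\nabla_x T_k(w^\varepsilon)$, and a final $\sqrt{\cdot}$ argument in~$k$ yields a.e.\ convergence of the full gradients.
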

\begin{proof}
    The proof is basically a repeat of the argument given in \cite{elmahi2005parabolic}. Since it is not stated there as a separate result with particular assumptions, we shall write it here briefly for the convenience of the reader. For the sake of the argument we come back to the variable
    $$
    w^\varepsilon := v^\varepsilon + \int_0^t h\diff W(s),\qquad w:= v + \int_0^t h\diff W(s).
    $$
    Both of those variables are closely related to $u^\varepsilon$, but since we want to keep it denoting a function defined on a probability space $\Omega$, we introduce $w$ and $w^\varepsilon$ for this argument only. Here, recall the definition \eqref{def:trunc} of the truncation $T_k$. Note that by \eqref{conv:strong_convergence_of_v_varepsilon} we can choose a subsequence of $w^\varepsilon$ which converges almost everywhere to $w$. Thus, it is easy to verify that
    \begin{align*}
        \mathbf{1}_{\{|w^\varepsilon|\leq k\}} \rightarrow \mathbf{1}_{\{|w|\leq k\}} \text{ strongly in }L_M(Q_T)\text{ and }L_{M^*}(Q_T),
    \end{align*}
    and by \eqref{conv:v_varepsilon_conv}, \eqref{conv:stress_conv}, Lemma \ref{lem:chi_identification_by_mono_trick}
    \begin{align}
        \nabla_x T_k(w^\varepsilon) &\wstar \nabla_x T_k(w) &&\text{ weakly* in }L_M(Q_T),\label{conv:nabla_w_varepsilon_conv_trunc}\\
        A\left(t, x, \nabla_xT_k(w^\varepsilon)\right) &\wstar A\left(t, x, \nabla_xT_k(w)\right) &&\text{ weakly* in }L_{M^*}(Q_T).\label{conv:stress_w_trunc}
    \end{align}
    Moreover, fix a cut-off function $\psi\in C_c^\infty(\Omega)$, $0\leq \psi\leq 1$. Due to the use of the truncation operator $T_k$ we can now apply Proposition \ref{prop:double_mollifier_conv} to conclude that there exists a sequence $\{\phi_j\}_{j\in\N}\subset L^\infty(0, T; C^\infty(D))$ such that $\nabla_x\phi_j\,\psi$ converges to $\nabla_x T_k(w)\,\psi$ modularly (and by Proposition \ref{prop:vitali_in_musielaki} also almost everywhere).
    In the following argument let $\chi^s$ denote a characteristic function of the set $\{(x, t)\in Q_T:\,\,|\nabla_x w|\leq s\}$ and $\chi_j^s$ of $\{(x, t)\in Q_T:\,\, |\nabla_x\phi_j|\leq s\}$. To start, we can write
    \begin{align*}
        &\int_{Q_T}\left(A\left(t, x, \nabla_x T_k(w^\varepsilon) \right) - A\left(t, x, \nabla_x T_k(w)\,\chi^s\right) \right)(\nabla_x T_k(w^\varepsilon) - \nabla_x T_k(w)\,\chi^s)\,\psi\diff x\diff t\\
        &\phantom{=}-\int_{Q_T}\left(A\left(t, x, \nabla_x T_k(w^\varepsilon) \right) - A\left(t, x, \nabla_x \phi_j\,\chi_j^s \right)\right)(\nabla_x T_k(w^\varepsilon) - \nabla_x\phi_j\,\chi_j^s)\,\psi\diff x\diff t\\
        &= \int_{Q_T}A\left(t, x, \nabla_x \phi_j\,\chi_j^s \right)(\nabla_x T_k(w^\varepsilon) - \nabla_x\phi_j\,\chi_j^s)\,\psi\diff x\diff t\\
        &\phantom{=}+\int_{Q_T}A\left(t, x, \nabla_x T_k(w^\varepsilon) \right)(\nabla_x\phi_j\,\chi_j^s - \nabla_x T_k(w)\,\chi^s)\,\psi\diff x\diff t\\
        &\phantom{=}+\int_{Q_T}A\left(t, x, \nabla_x T_k(w)\,\chi^s\right)(\nabla_x T_k(w)\,\chi^s - \nabla_x T_k(w^\varepsilon))\,\psi\diff x\diff t\\
        &=: I^{\varepsilon, j, s}_1 + I^{\varepsilon, j, s}_2 + I^{\varepsilon, j, s}_3.
    \end{align*}
Our goal is to converge in the following order: first with $\varepsilon\to 0^+$, then with  $j\to\infty$, and finally with $s\to\infty$. For $I_1$ by Proposition \ref{prop:stress_bound_for_bounded} and \eqref{conv:nabla_w_varepsilon_conv_trunc} we may converge with $\varepsilon\to 0^+$ to get
$$
\lim_{\varepsilon\to 0^+}I^{\varepsilon, j, s}_1 = \int_{Q_T}A\left(t, x, \nabla_x \phi_j\,\chi_j^s \right)(\nabla_x T_k(w) - \nabla_x\phi_j\,\chi_j^s)\psi\diff x \diff t =: I^{j, s}_1.
$$
Note that we can write equivalently
\begin{align*}
I^{j, s}_1 = \int_{Q_T}\chi_j^s\,A\left(t, x, \nabla_x \phi_j\right)(\nabla_x T_k(w) - \nabla_x \phi_j)\psi\diff x\diff t.
\end{align*}
Using Proposition \ref{prop:stress_bound_for_bounded} and the modular convergence of $\nabla_x\phi_j\,\psi$
\begin{align*}
    \left|\int_{Q_T}\chi_j^s\,A\left(t, x, \nabla_x \phi_j\right)(\nabla_x T_k(w) - \nabla_x \phi_j)\psi\diff x\diff t\right|
    \lesssim \int_{Q_T}|\nabla_x T_k(w) - \nabla_x \phi_j|\psi\diff x\diff t\rightarrow 0\text{ as }j\to\infty.
\end{align*}
Thus
\begin{align*}
    \lim_{s\to +\infty}\lim_{j\to\infty}I^{j, s}_1 = 0.
\end{align*}
Moving to $I_2^{\varepsilon, j, s}$ we may conclude by \eqref{conv:stress_w_trunc} and Lemma \ref{lem:chi_identification_by_mono_trick}
\begin{align*}
    \lim_{\varepsilon\to 0^+}I_2^{\varepsilon, j, s} = \int_{Q_T}A\left(t, x, \nabla_x T_k(w)\right)(\nabla_x\phi_j\,\chi_j^s - \nabla_x T_k(w)\,\chi^s)\psi\diff x\diff t =: I^{j, s}_2.
\end{align*}
To converge here we may note that 
$$
\nabla_x\phi_j\,\chi_j^s\,\psi \rightarrow  \nabla_x T_k(w)\,\chi^s\,\psi\text{ a.e. in }Q_T
$$
and by convexity of $M$
\begin{multline*}
M\left(t, x, \frac{\nabla_x\phi_j\,\chi_j^s\,\psi}{\lambda}\right) =\chi_j^s\,M\left(t, x,  \frac{\nabla_x\phi_j\,\psi}{\lambda}\right)\\
\leq \frac{\lambda_1}{\lambda}M\left(t, x, \frac{\nabla_x\phi_j\,\psi - \nabla_x T_k(w)\,\psi}{\lambda_1}\right) + \frac{\lambda_2}{\lambda}M\left(t, x, \frac{\nabla_x T_k(w)\,\psi}{\lambda_2}\right),
\end{multline*}
where $\lambda := \lambda_1 + \lambda_2$, and $\lambda_1 > 0$ is such that
$$
M\left(t, x, \frac{\nabla_x\phi_j\,\psi - \nabla_x T_k(w)\,\psi}{\lambda_1}\right) \rightarrow 0\quad\text{ as }j\to\infty.
$$
(which exists by modular convergence of $\nabla_x\phi_j\,\psi$ to $\nabla_x T_k(w)\,\psi$) and for $\lambda_2 > 0$
$$
M\left(t, x, \frac{\nabla_x T_k(w)\,\psi}{\lambda_2}\right)\in L^1(Q_T).
$$
Hence $\left\{M\left(t, x, \frac{\nabla_x\phi_j\,\chi_j^s\,\psi}{\lambda}\right)\right\}_j$ is equiintegrable and by Proposition \ref{prop:vitali_in_musielaki}
\begin{align}\label{conv:trunc_with_chi_modular}
\nabla_x\phi_j\,\chi_j^s\,\psi \xrightarrow{M}  \nabla_x T_k(w)\,\chi^s\,\psi\quad\text{ modularly in }L_{M}(Q_T).
\end{align}
In the end, by Proposition \ref{prop:conv_of_product_in_musielaki}
\begin{align*}
    \lim_{j\to \infty}I_2^{j, s} = 0.
\end{align*}
To converge in $I^{\varepsilon, j, s}_3$ we utilize once again Proposition \ref{prop:stress_bound_for_bounded} together with \eqref{conv:nabla_w_varepsilon_conv_trunc} to get
\begin{align*}
    \lim_{\varepsilon\to 0^+}I_3^{\varepsilon, j, s} = \int_{Q_T}A\left(t, x, \nabla_x T_k(w)\,\chi^s\right)(\nabla_x T_k(w)\,\chi^s - \nabla_x T_k(w))\psi\diff x\diff t = 0.
\end{align*}
Combining everything together we have proven
\begin{align*}
    &\int_{Q_T}\left(A\left(t, x, \nabla_x T_k(w^\varepsilon)\right) - A\left(t, x, \nabla_x T_k(w)\,\chi^s\right) \right)(\nabla_x T_k(w^\varepsilon) - \nabla_x T_k(w)\,\chi^s)\psi\diff x\diff t\\
    &=\int_{Q_T}\left(A\left(t, x, \nabla_x T_k(w^\varepsilon)\right) - A\left(t, x, \nabla_x \phi_j\,\chi_j^s\right)\right)(\nabla_x T_k(w^\varepsilon) - \nabla_x\phi_j\,\chi_j^s)\psi\diff x\diff t + \delta(\varepsilon, j , s),
\end{align*}
where $\delta(\varepsilon, j , s)\rightarrow 0$ as $\varepsilon\to 0^+$, $j\to +\infty$, $s\to +\infty$. Thus, for every $s\geq r > 0$ we have
\begin{align*}
    0&\leq\int_{\{|\nabla_x w|\leq r\}}\left(A\left(t, x, \nabla_x T_k(w^\varepsilon) \right) - A\left(t, x, \nabla_x T_k(w)\right)\right)(\nabla_x T_k(w^\varepsilon) - \nabla_x T_k(w))\psi\diff x\diff t\\
    &\phantom{=}\leq \int_{\{|\nabla_x v|\leq s\}}\left(A\left(t, x, \nabla_x T_k(w^\varepsilon) \right) - A\left(t, x, \nabla_x T_k(w)\right)\right)(\nabla_x T_k(w^\varepsilon) - \nabla_x T_k(w))\psi\diff x\diff t\\
    &\phantom{=}\leq \int_{Q_T}\left(A\left(t, x, \nabla_x T_k(w^\varepsilon) \right) - A\left(t, x, \nabla_x T_k(w)\,\chi^s\right) \right)(\nabla_x T_k(w^\varepsilon) - \nabla_x T_k(w)\,\chi^s)\psi\diff x\diff t \\
    &=\int_{Q_T}\left(A\left(t, x, \nabla_x T_k(w^\varepsilon)\right) - A\left(t, x, \nabla_x \phi_j\,\chi_j^s\right)\right)(\nabla_x T_k(w^\varepsilon) - \nabla_x\phi_j\,\chi_j^s)\psi\diff x\diff t\\
    &\qquad + \delta(\varepsilon, j , s)\\
    &=-\int_{Q_T}A\left(t, x, \nabla_x \phi_j\,\chi_j^s\right)(\nabla_x T_k(w^\varepsilon) - \nabla_x\phi_j\,\chi_j^s)\psi\diff x\diff t\\
    &\phantom{=}+ \int_{Q_T}A\left(t, x, \nabla_x T_k(w^\varepsilon)\right) \nabla_x T_k(w^\varepsilon)\,\psi\diff x\diff t\\
    &\phantom{=}-\int_{Q_T}A\left(t, x, \nabla_x T_k(w^\varepsilon)\right)\nabla_x\phi_j\,\chi_j^s\,\psi\diff x\diff t\\
    &\qquad +\delta(\varepsilon, j, s)\\
    &= J^{\varepsilon, j, s}_1 + J^{\varepsilon, j, s}_2 + J^{\varepsilon, j, s}_3 + \delta(\varepsilon, j, s).
\end{align*}
Similarly to $I^{\varepsilon, j, s}_1$ we can deduce
$$
J^{\varepsilon, j, s}_1  \rightarrow 0 \text{ as }\varepsilon\to 0^+, j\to +\infty, s\to+\infty.
$$
At the same time by, arguing the same way as for \eqref{ineq:some_ineq_epsilon_mono_trick}, but instead using Proposition \ref{prop:deterministic_energy_theo} and \eqref{eq:energy_equality_after_limit_trunc}
$$
\limsup_{\varepsilon\to 0^+}J_2^{\varepsilon, j, s} \leq J_2 := \int_{Q_T}A\left(t, x, \nabla_x T_k(w)\right) \nabla_x T_k(w)\,\psi\diff x\diff t.
$$
To treat the third term, we simply apply \eqref{conv:stress_w_trunc} to get
$$
\lim_{\varepsilon\to 0^+}J_3^{\varepsilon, j, s} = -\int_{Q_T}A\left(t, x, \nabla_x T_k(w)\right)\nabla_x\phi_j\,\chi_j^s\,\psi\diff x\diff t =: J_3^{j,s}.
$$
Moreover, by \eqref{conv:trunc_with_chi_modular}
$$
\lim_{j\to +\infty}J_3^{j, s} = -\int_{Q_T}A\left(t, x, \nabla_x T_k(w)\right)\nabla_x T_k(w)\,\chi^s\,\psi\diff x\diff t =: J_3^{s}.
$$
Now, we can write
\begin{align*}
    J_2^{} + J_3^{s} = \int_{\{|\nabla_x T_k(w)| > s\}}A\left(t, x, \nabla_x T_k(w)\right)\nabla_x T_k(w)\,\psi\diff x\diff t,
\end{align*}
which converges to $0$, as $s\to +\infty$, since $A\left(t, x, \nabla_x T_k(w)\right)\nabla_x T_k(w)\,\psi\in L^1(Q_T)$.

Thus, as $A$ is strictly monotone by \ref{ass:on_A_3}, up to the subsequence
$$
\nabla_x T_k(w^\varepsilon) \rightarrow \nabla_x T_k(w) \text{ a.e. in }\{|\nabla_x v|\leq r\}\cap \{\psi\equiv 1\},
$$
and by diagonal argument a.e. in $Q_T$. Hence, by applying de la Vall\'{e}e Poussin criterion and the Vitali convergence theorem
$$
\nabla_x T_k(w^\varepsilon) \rightarrow \nabla_x T_k(w) \text{ strongly in }L^1(Q_T).
$$
At last, we can show the stated almost everywhere convergence. We have
\begin{align*}
    &\lim_{\varepsilon\to 0^+}\int_{Q_T}\sqrt{|\nabla_x w^\varepsilon - \nabla_x w|}\diff x\diff t\\
    &\leq \lim_{\varepsilon\to 0^+}\int_{Q_T}\sqrt{|\nabla_x w^\varepsilon - \nabla_x T_k(w^\varepsilon)|} + \sqrt{|\nabla_xT_k(w^\varepsilon) - \nabla_x T_k(w)|} + \sqrt{|\nabla_x T_k(w) - \nabla_x w|}\diff x\diff t\\
    &\leq \lim_{\varepsilon\to 0^+}C\left(\|\nabla_x w^\varepsilon\|_1^{1/2}|\{|w^\varepsilon > k\}|^{1/2} + \|\nabla_x w\|_1^{1/2}|\{|w > k\}|^{1/2}\right) \leq \frac{C}{\sqrt{k}}.
\end{align*}
Letting $k\to +\infty$ we obtain the thesis up to the subsequence.
\end{proof}

\begin{lem}\label{lem:modular_conv_v_varepsilon}
    Suppose $v^\varepsilon$ is as in Lemma \ref{lem:apriori_v_epsilon} and $v$ is given in Lemma \ref{lem:conv_v_varepsilon}. Then
    \begin{align*}
        \nabla_x v^\varepsilon&\xrightarrow{M} \nabla_x v\qquad &&\text{ modularly in }L_M(Q_T),\\
        A\left(t, x, \nabla_x v^\varepsilon + \nabla_x\int_0^t h\diff W\right) &\xrightarrow{M^*} A\left(t, x, \nabla_x v + \nabla_x\int_0^t h\diff W\right)\qquad &&\text{ modularly in }L_{M^*}(Q_T).
    \end{align*}
\end{lem}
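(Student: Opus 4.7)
The plan is to apply Vitali's characterization of modular convergence (Proposition \ref{prop:vitali_in_musielaki}), so I need convergence in measure plus uniform equiintegrability of the modular integrands. Writing $w^\varepsilon := v^\varepsilon + \int_0^{\cdot} h\diff W$ and $w := v + \int_0^{\cdot} h\diff W$, the in-measure convergence $\nabla_x v^\varepsilon \to \nabla_x v$ (and hence $A(t,x,\nabla_x w^\varepsilon)\to A(t,x,\nabla_x w)$ via the Carath\'eodory property of $A$) is already provided by Lemma \ref{lem:almost_everywhere_conv_of_nabla_v_varepsilon}. The real work lies in proving equiintegrability.

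The first step is to upgrade inequality \eqref{ineq:some_ineq_epsilon_mono_trick}, which after the identification $\chi = A(t,x,\nabla_x w)$ of Lemma \ref{lem:chi_identification_by_mono_trick} becomes
\begin{equation*}
\limsup_{\varepsilon\to 0^+}\int_{Q_T} A(t,x,\nabla_x w^\varepsilon)\cdot \nabla_x w^\varepsilon \diff x\diff t \leq \int_{Q_T} A(t,x,\nabla_x w)\cdot \nabla_x w \diff x\diff t,
\end{equation*}
into genuine $L^1(Q_T)$ convergence of the energy densities. Combining the strict monotonicity \ref{ass:on_A_3} with $A(t,x,0)=0$ yields $A(t,x,\xi)\cdot \xi \geq 0$ a.e., so Fatou's lemma applied to the a.e.\ limit delivers the reverse inequality $\int A(t,x,\nabla_x w)\cdot \nabla_x w \leq \liminf_\varepsilon \int A(t,x,\nabla_x w^\varepsilon)\cdot \nabla_x w^\varepsilon$. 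Thus the integrals converge, and Scheff\'e's lemma (applicable because the sequence is nonnegative and converges a.e.) promotes this to strong $L^1(Q_T)$ convergence, so $\{A(t,x,\nabla_x w^\varepsilon)\cdot \nabla_x w^\varepsilon\}_\varepsilon$ is uniformly equiintegrable. The coercivity bound \ref{ass:on_A_2}, with $g\in L^\infty(Q_T)$, then transfers this equiintegrability simultaneously to $\{M(t,x,\nabla_x w^\varepsilon)\}_\varepsilon$ and $\{M^*(t,x,A(t,x,\nabla_x w^\varepsilon))\}_\varepsilon$.

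For the final step, convexity of $M$ gives
\begin{equation*}
M\left(t,x,\tfrac{\nabla_x v^\varepsilon - \nabla_x v}{2}\right) = M\left(t,x,\tfrac{\nabla_x w^\varepsilon - \nabla_x w}{2}\right) \leq \tfrac12 M(t,x,\nabla_x w^\varepsilon) + \tfrac12 M(t,x,\nabla_x w),
\end{equation*}
so the left-hand side is uniformly equiintegrable and tends to zero a.e.; Proposition \ref{prop:vitali_in_musielaki} with $\lambda = 2$ then delivers the desired modular convergence in $L_M(Q_T)$. The identical convexity-plus-equiintegrability argument applied to $M^*$ and the pair $A(t,x,\nabla_x w^\varepsilon), A(t,x,\nabla_x w)$ yields modular convergence in $L_{M^*}(Q_T)$.

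The main obstacle is the first step: extracting convergence of the energy integrals from the one-sided bound \eqref{ineq:some_ineq_epsilon_mono_trick}. The matching lower bound is not automatic; it crucially uses the pointwise nonnegativity of $A(t,x,\xi)\cdot \xi$ (a consequence of \ref{ass:on_A_3} together with $A(t,x,0)=0$) so that both Fatou's lemma and Scheff\'e's lemma may be invoked. Without this sign property one could only extract weak $L^1$ compactness, which would be insufficient to transfer equiintegrability through the coercivity inequality \ref{ass:on_A_2}.
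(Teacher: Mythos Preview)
Your proof is correct and follows essentially the same route as the paper: both combine Fatou with the $\limsup$ bound \eqref{ineq:some_ineq_epsilon_mono_trick} (after the identification of $\chi$) to get convergence of the energy integrals, promote this to $L^1$ convergence of $A(t,x,\nabla_x w^\varepsilon)\cdot\nabla_x w^\varepsilon$, and then transfer equiintegrability through \ref{ass:on_A_2} to invoke Proposition~\ref{prop:vitali_in_musielaki}. The only cosmetic differences are that you cite Scheff\'e by name where the paper writes out the equivalent decomposition $|f_\varepsilon - f| = (f_\varepsilon - f) + 2[f_\varepsilon - f]_-$ with dominated convergence on the negative part, and that the paper wraps the argument in an explicit subsequence-of-subsequence step (since Lemma~\ref{lem:almost_everywhere_conv_of_nabla_v_varepsilon} is only stated up to a subsequence), which you might want to mention.
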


\begin{proof}
    The proof is directly the same as, for example in \cite{gwiazda2008onnonnewtonian}, but we will repeat it for the convenience of the reader. Take an arbitrary subsequence of $\nabla_x v^\varepsilon$. By Lemma \ref{lem:almost_everywhere_conv_of_nabla_v_varepsilon} there exists a subsequence that converges almost everywhere in $Q_T$. By the continuity of $A$ on its third variable
    $$
    A\left(t, x, \nabla_x v^\varepsilon + \nabla_x\int_0^t h\diff W\right) \rightarrow A\left(t, x, \nabla_x v + \nabla_x\int_0^t h\diff W\right)
    $$
    a.e. in $Q_T$ as well. Moreover, by Fatou's lemma
    \begin{multline*}
        \liminf_{\varepsilon\to 0^+}\int_{Q_T}A\left(t, x, \nabla_x v^\varepsilon + \nabla_x\int_0^t h\diff W\right)\cdot\nabla_x\left( v^\varepsilon + \int_0^t h\diff W\right)\diff x\diff t\\
        \geq \int_{Q_T} A\left(t, x, \nabla_x v + \nabla_x\int_0^t h\diff W\right)\cdot\nabla_x\left(v + \int_0^t h\diff W\right)\diff x\diff t
    \end{multline*}
    and by \eqref{ineq:some_ineq_epsilon_mono_trick}
    \begin{multline*}
        \limsup_{\varepsilon\to 0^+}\int_{Q_t} A\left(s, x, \nabla_x v^\varepsilon + \nabla_x\int_0^s h\diff W\right)\cdot\nabla_x\left( v^\varepsilon + \int_0^s h\diff W\right)\diff x\diff s\\
        \leq \int_{Q_t} A\left(s, x, \nabla_x v + \nabla_x\int_0^s h\diff W\right)\cdot\nabla_x\left(v + \int_0^s h\diff W\right)\diff x\diff s.
    \end{multline*}
    Hence,
    \begin{multline*}
        \lim_{\varepsilon\to 0^+}\int_{Q_T} A\left(t, x, \nabla_x v^\varepsilon + \nabla_x\int_0^t h\diff W\right)\cdot\nabla_x\left( v^\varepsilon + \int_0^t h\diff W\right)\diff x\diff t\\
        =\int_{Q_T} A\left(t, x, \nabla_x v + \nabla_x\int_0^t h\diff W\right)\cdot\nabla_x\left(v + \int_0^t h\diff W\right)\diff x\diff t.
    \end{multline*}
    Noticing that
    \begin{equation*}
        \begin{split}
            &\int_{Q_T}\Bigg|A\left(t, x, \nabla_x v^\varepsilon + \nabla_x\int_0^t h\diff W\right)\cdot\nabla_x\left( v^\varepsilon + \int_0^t h\diff W\right)\\
            &\qquad- A\left(t, x, \nabla_x v + \nabla_x\int_0^t h\diff W\right)\cdot\nabla_x\left( v + \int_0^t h\diff W\right)\Bigg|\diff x\diff t\\
            &=\int_{Q_T}\Bigg(A\left(t, x, \nabla_x v^\varepsilon + \nabla_x\int_0^t h\diff W\right)\cdot\nabla_x\left( v^\varepsilon + \int_0^t h\diff W\right)\\
            &\qquad- A\left(t, x, \nabla_x v + \nabla_x\int_0^t h\diff W\right)\cdot\nabla_x\left( v + \int_0^t h\diff W\right)\Bigg)\diff x\diff t\\
            &+ 2\int_{Q_T}\Bigg[A\left(t, x, \nabla_x v^\varepsilon + \nabla_x\int_0^t h\diff W\right)\cdot\nabla_x\left( v^\varepsilon + \int_0^t h\diff W\right)\\
            &\qquad- A\left(t, x, \nabla_x v + \nabla_x\int_0^t h\diff W\right)\cdot\nabla_x\left( v + \int_0^t h\diff W\right)\Bigg]_{-}\diff x\diff t,
        \end{split}
    \end{equation*}
    we conclude by Lebesgue's Dominated Convergence theorem
    $$
    A\left(t, x, \nabla_x v^\varepsilon + \nabla_x\int_0^t h\diff W\right) \rightarrow A\left(t, x, \nabla_x v + \nabla_x\int_0^t h\diff W\right),
    $$
    strongly in $L^1(Q_T)$. By Dunford--Pettis theorem, this implies uniform equiintegrability, which in turn, by the coercivity and growth condition \ref{ass:on_A_2} yields the uniform bound and equiintegrability in $L^1$ of $\{M(t, x, \nabla_x v^\varepsilon + \nabla_x\int_0^t h\diff W)\}_{\varepsilon > 0}$ and $\left\{M^*\left(t, x,  A\left(t, x, \nabla_x v^\varepsilon + \nabla_x\int_0^t h\diff W\right)\right)\right\}_{\varepsilon > 0}$. By Proposition \ref{prop:vitali_in_musielaki} and the standard subsequence argument we obtain the thesis.
\end{proof}

% \begin{cor}
%     By Lemma \ref{lem:almost_everywhere_conv_of_nabla_v_varepsilon} and Fatou's lemma
%     \begin{multline*}
%         \liminf_{\varepsilon\to 0^+}\int_{Q_T}A\left(t, x, \nabla_x v^\varepsilon + \nabla_x\int_0^t h\diff W\right)\cdot\nabla_x\left( v^\varepsilon + \int_0^t h\diff W(\tau)\right)\diff x\diff t\\
%         \geq \int_{Q_T} A\left(t, x, \nabla_x v + \nabla_x\int_0^t h\diff W\right)\cdot\nabla_x\left(v + \int_0^t h\diff W(\tau)\right)\diff x\diff t
%     \end{multline*}
%     and due to \eqref{ineq:some_ineq_epsilon_mono_trick}
%     \begin{multline*}
%         \limsup_{\varepsilon\to 0^+}\int_{Q_t} A\left(t, x, \nabla_x v^\varepsilon + \nabla_x\int_0^t h\diff W\right)\cdot\nabla_x\left( v^\varepsilon + \int_0^t h\diff W(\tau)\right)\diff x\diff t\\
%         \leq \int_{Q_t} A\left(t, x, \nabla_x v + \nabla_x\int_0^t h\diff W\right)\cdot\nabla_x\left(v + \int_0^t h\diff W(\tau)\right)\diff x\diff t.
%     \end{multline*}
%     Thus
%     \begin{multline}\label{conv:connvergence_of_the_product_mono_varepsilon}
%         \lim_{\varepsilon\to 0^+}\int_{Q_t} A\left(t, x, \nabla_x v^\varepsilon + \nabla_x\int_0^t h\diff W\right)\cdot\nabla_x\left( v^\varepsilon + \int_0^t h\diff W(\tau)\right)\diff x\diff t\\
%         =\int_{Q_t} A\left(t, x, \nabla_x v + \nabla_x\int_0^t h\diff W\right)\cdot\nabla_x\left(v + \int_0^t h\diff W(\tau)\right)\diff x\diff t.
%     \end{multline}
% \end{cor}

\begin{cor}
    By Lemma \ref{lem:modular_conv_v_varepsilon} and Proposition \ref{prop:conv_of_product_in_musielaki} we have
    \begin{multline}\label{conv:connvergence_of_the_product_mono_varepsilon}
        \lim_{\varepsilon\to 0^+}\int_{Q_t} A\left(s, x, \nabla_x v^\varepsilon + \nabla_x\int_0^s h\diff W\right)\cdot\nabla_x\left( v^\varepsilon + \int_0^s h\diff W\right)\diff x\diff s\\
        =\int_{Q_s} A\left(s, x, \nabla_x v + \nabla_x\int_0^s h\diff W\right)\cdot\nabla_x\left(v + \int_0^s h\diff W\right)\diff x\diff s,
    \end{multline}
    for the whole sequence.
\end{cor}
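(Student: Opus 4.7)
The plan is to combine the two modular convergences from Lemma \ref{lem:modular_conv_v_varepsilon} with the product convergence result of Proposition \ref{prop:conv_of_product_in_musielaki}. First I would observe that the modular convergence $\nabla_x v^\varepsilon \xrightarrow{M} \nabla_x v$ in $L_M(Q_T)$ is trivially stable under the addition of a fixed function, since the common stochastic integral cancels in the difference:
\[
\int_{Q_T} M\!\left(t,x,\tfrac{1}{\lambda}\Big[\nabla_x\!\Big(v^\varepsilon + \int_0^t h\diff W\Big) - \nabla_x\!\Big(v + \int_0^t h\diff W\Big)\Big]\right)\diff x\diff t = \int_{Q_T} M\!\left(t,x,\tfrac{\nabla_x v^\varepsilon - \nabla_x v}{\lambda}\right)\diff x\diff t.
\]
For a suitable $\lambda > 0$ the right-hand side tends to zero by Lemma \ref{lem:modular_conv_v_varepsilon}, so
\[
\nabla_x\!\left(v^\varepsilon + \int_0^t h\diff W\right) \xrightarrow{M} \nabla_x\!\left(v + \int_0^t h\diff W\right) \quad\text{modularly in } L_M(Q_T).
\]

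Combined with the $L_{M^*}$-modular convergence of $A\!\left(t,x,\nabla_x v^\varepsilon + \nabla_x \int_0^t h\diff W\right)$ also provided by Lemma \ref{lem:modular_conv_v_varepsilon}, Proposition \ref{prop:conv_of_product_in_musielaki} yields strong convergence in $L^1(Q_T)$ of the pointwise scalar product appearing on the left-hand side of \eqref{conv:connvergence_of_the_product_mono_varepsilon}. Strong $L^1(Q_T)$-convergence implies convergence of the integral over any measurable subset, in particular $Q_t \subseteq Q_T$, with a bound that is in fact uniform in $t \in [0,T]$. This gives the claimed identity for every $t$.

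For the whole-sequence statement, the convergences supplied by Lemma \ref{lem:modular_conv_v_varepsilon} already hold along the entire family $\{v^\varepsilon\}_{\varepsilon>0}$: the subsequence extraction in its proof is closed by the uniqueness of the limit, which is guaranteed via Lemma \ref{lem:chi_identification_by_mono_trick} together with the deterministic energy equality of Proposition \ref{prop:deterministic_energy_theo}. Hence no further extraction is needed. I do not anticipate a genuine obstacle, as the corollary is essentially a direct application of the two preceding results; the only point requiring verification is the invariance of modular convergence under addition of a common element, which is immediate from the definition of modular convergence.
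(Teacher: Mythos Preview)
Your proposal is correct and follows exactly the approach the paper intends: the corollary has no separate proof in the paper because it is precisely the direct application of Lemma \ref{lem:modular_conv_v_varepsilon} and Proposition \ref{prop:conv_of_product_in_musielaki} that you spell out, including the observation that the whole-sequence statement is already secured by the subsequence argument concluding the proof of Lemma \ref{lem:modular_conv_v_varepsilon}.
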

\begin{cor}
    By \eqref{conv:connvergence_of_the_product_mono_varepsilon} and Lemma \ref{lem:chi_identification_by_mono_trick} applied to \eqref{eq:some_equality_in_conv_with_epsilon_1} (compare also with \eqref{ineq:limit_energy_ineq_for_strong_conv}) we can deduce
    $$
    \limsup_{\varepsilon\to 0^+}\|v^\varepsilon(t)\|_{L^2(D)} \leq \|v(t)\|_{L^2(D)}\quad\text{ for a.e. }t\in [0, T],
    $$
    which together with weak lower semicontinuity of the $L^2$ norm implies
    $$
    \|v^\varepsilon(t)\|_{L^2(D)}\rightarrow \|v(t)\|_{L^2(D)}\quad \text{ for a.e. }t\in[0, T].
    $$
    This in turn easily gives us
    \begin{align}\label{conv:strong_conv_in_L2_of_v}
        v^\varepsilon\rightarrow v  \quad\text{ strongly in }L^2(Q_T).
    \end{align}
\end{cor}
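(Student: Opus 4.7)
The plan is to promote the $\varepsilon\to 0^+$ identity coming from the approximate energy equality \eqref{eq:some_equality_in_conv_with_epsilon_1} into a norm-convergence statement for $v^\varepsilon(t)$ in $L^2(D)$, and then upgrade weak to strong convergence via a Radon--Riesz type argument.

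First I would integrate \eqref{eq:some_equality_in_conv_with_epsilon_1} in time over $(0,t)$ for a fixed (but a.e.) $t\in(0,T)$ and split $A^\varepsilon=A+\varepsilon\nabla_\xi m(|\cdot|)$. Passing to the limit term by term: the integral $\int_{Q_t} A^\varepsilon\cdot \nabla_x(v^\varepsilon+\int_0^s h\diff W)\diff x\diff s$ converges thanks to \eqref{conv:connvergence_of_the_product_mono_varepsilon}; the $A$-part of the right-hand side converges by the weak-$\ast$ convergence \eqref{conv:stress_conv} paired against $\nabla_x\int_0^{\cdot}h\diff W\in L^\infty(Q_T)\subset E_M(Q_T)$ (cf.\ Remark \ref{rem:L_infty_behaviour_of_h_approx}); the $\varepsilon\nabla_\xi m$-part of the right-hand side vanishes in the limit by Lemma \ref{lem:convergence_to_zero_with_approx}, applied with the same $L^\infty$ test function. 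Comparing the resulting identity with the time-integrated form of \eqref{eq:energy_equality_after_limit}, where by Lemma \ref{lem:chi_identification_by_mono_trick} we already know $\chi = A(t,x,\nabla_x(v+\int_0^t h\diff W))$, yields in fact the equality $\limsup_{\varepsilon\to 0^+}\|v^\varepsilon(t)\|_{L^2(D)}^2=\|v(t)\|_{L^2(D)}^2$ for a.e.\ $t$, which implies the claimed $\limsup$-inequality.

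Combined with weak lower semicontinuity of the $L^2(D)$-norm this already gives pointwise-in-$t$ convergence of the norms. To invoke Radon--Riesz one still needs weak $L^2(D)$ convergence of $v^\varepsilon(t)$ to $v(t)$ for a.e.\ $t$, which does not follow for free from the weak-$\ast$ statement \eqref{conv:v_varepsilon_conv}. I would recover it along a subsequence from the strong $L^1(Q_T)$ convergence \eqref{conv:strong_convergence_of_v_varepsilon}: extract $v^{\varepsilon_k}\to v$ a.e.\ on $Q_T$; for a.e.\ $t$, the slice $v^{\varepsilon_k}(t,\cdot)$ is bounded in $L^2(D)$ by Lemma \ref{lem:apriori_v_epsilon} and converges to $v(t,\cdot)$ a.e.\ on $D$, so it converges weakly in $L^2(D)$. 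Radon--Riesz then promotes this to strong $L^2(D)$ convergence for a.e.\ $t$, and the usual subsequence argument removes the selection.

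Finally, strong convergence in $L^2(Q_T)$ follows by dominated convergence in time: the integrand $\|v^\varepsilon(t)-v(t)\|_{L^2(D)}^2$ tends to $0$ a.e.\ in $t$ and is dominated by $4\sup_\varepsilon\|v^\varepsilon\|^2_{L^\infty(0,T;L^2(D))}+4\|v\|^2_{L^\infty(0,T;L^2(D))}$, which is finite by Lemma \ref{lem:apriori_v_epsilon}. The only subtlety, rather than a genuine obstacle, is precisely establishing the pointwise-in-$t$ weak convergence needed to apply Radon--Riesz; this is why the strong $L^1(Q_T)$ convergence \eqref{conv:strong_convergence_of_v_varepsilon} plays an auxiliary but essential role here.
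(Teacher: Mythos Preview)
Your approach is correct and essentially matches the paper's sketch embedded in the corollary statement. One small imprecision worth fixing: \eqref{conv:connvergence_of_the_product_mono_varepsilon} concerns $A$, not $A^\varepsilon$, so after your split the left-hand-side contribution $\varepsilon\int_{Q_t}\nabla_\xi m(|\nabla_x(v^\varepsilon+\int_0^s h\diff W)|)\cdot\nabla_x(v^\varepsilon+\int_0^s h\diff W)\diff x\diff s$ is not covered by that convergence and need not vanish; since it is nonnegative (recall $\nabla_\xi m(|\xi|)\cdot\xi=m(|\xi|)+m^*(|\nabla_\xi m(|\xi|)|)\geq 0$), dropping it yields exactly the claimed $\limsup$-inequality, but not the intermediate \emph{equality} you assert. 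Your explicit justification of pointwise-in-$t$ weak $L^2(D)$ convergence via \eqref{conv:strong_convergence_of_v_varepsilon} and the $L^\infty(0,T;L^2(D))$ bound from Lemma \ref{lem:apriori_v_epsilon} is a helpful detail the paper leaves implicit.
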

Here, note that \eqref{conv:strong_conv_in_L2_of_v}, together with \eqref{conv:stress_conv} and the identification in Lemma \ref{lem:chi_identification_by_mono_trick} are enough to converge in the weak formulation for the approximation \eqref{eq:weak_form_for_approx} to obtain \eqref{eq:weak_form_final} for $u:= v + \int_0^th\diff W$, but we would lack the information about the measurability in the probability space. At last, we want to upgrade our convergences to $\Omega\times Q_T$, which is the main reason, why we need Assumption \ref{ass:growth_Young}, and Lemma \ref{lem:stoch_integral_in_musielak_space}.

\begin{lem}
    Suppose $v^\varepsilon$ is as in Lemma \ref{lem:apriori_v_epsilon} and $v$ is given as in Lemma \ref{lem:conv_v_varepsilon}. Then,
    $$
     v^\varepsilon \rightarrow v\quad\text{ strongly in }L^2(\Omega\times Q_T).
    $$
\end{lem}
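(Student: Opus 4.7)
The plan is to combine the pathwise (in $\omega$) strong convergence $v^\varepsilon \to v$ in $L^2(Q_T)$ obtained in \eqref{conv:strong_conv_in_L2_of_v}---which, by the uniqueness corollary following Lemma \ref{lem:chi_identification_by_mono_trick}, holds along the whole sequence for almost every $\omega \in \Omega'$---with a uniform $L^r(\Omega)$-bound on $\|v^\varepsilon\|_{L^2(Q_T)}^2$ for some $r > 1$, and then invoke Vitali's convergence theorem over the probability space.

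To produce such a higher-moment bound I would strengthen Lemma \ref{lem:apropri_u_epsilon} to an $r$-th moment estimate. Since $h$ is here an elementary process taking finitely many values in $HS(L^2(D); W^{s,2}_0(D))$, the quantity $\|h(\omega,t)\|_{HS(L^2(D))}$ is uniformly bounded in $(\omega,t)$. Applying It\^o's formula to $u \mapsto \|u\|_{L^2(D)}^{2r}$ (most conveniently by composing the scalar It\^o formula with the real-valued semimartingale $\|u^\varepsilon\|_{L^2(D)}^2$ whose dynamics are already recorded in \eqref{eq:ito_energy_u}), using the coercivity \ref{ass:A_varepsilon_3}, and absorbing the martingale part via the Burkholder--Davis--Gundy inequality in the spirit of \eqref{44bis}, yields a bound
\begin{equation*}
\mathbb{E}\Bigl(\sup_{t\in[0,T]}\|u^\varepsilon(t)\|_{L^2(D)}^{2r}\Bigr) \leq C(r)\bigl(1 + \mathbb{E}\|u_0\|_{L^2(D)}^{2r} + \|g\|_\infty^r\bigr),
\end{equation*}
uniform in $\varepsilon$, for every $r \geq 1$. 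Combining this with Lemma \ref{lem:stoch_integral_in_musielak_space}, which under Assumption \ref{ass:growth_Young} places $\int_0^\cdot h\diff W$ in $E_m(\Omega \times Q_T)$ and hence in $L^q(\Omega \times Q_T)$ for every finite $q$, the family $\{v^\varepsilon\}_{\varepsilon > 0}$ is bounded in $L^{2r}(\Omega; L^\infty(0,T; L^2(D)))$ for every $r \geq 1$.

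From $\|v^\varepsilon - v\|_{L^2(Q_T)}^2 \leq 2T\bigl(\|v^\varepsilon\|_{L^\infty(0,T;L^2(D))}^2 + \|v\|_{L^\infty(0,T;L^2(D))}^2\bigr)$ it then follows that $\{\|v^\varepsilon - v\|_{L^2(Q_T)}^2\}_{\varepsilon > 0}$ is bounded in $L^r(\Omega)$ for some $r > 1$, and therefore uniformly integrable on $\Omega$ by the de la Vall\'ee Poussin criterion. Together with the almost sure pointwise convergence from \eqref{conv:strong_conv_in_L2_of_v}, Vitali's theorem yields $\mathbb{E}\|v^\varepsilon - v\|_{L^2(Q_T)}^2 \to 0$, which is exactly the claimed convergence in $L^2(\Omega \times Q_T)$. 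The only genuinely new point relative to earlier computations is the higher-moment It\^o estimate, but thanks to the bounded-image structure of the elementary process $h$ this is a routine iteration of what was already done for $r=1$, and no hypothesis on $M$, $A$, or $h$ beyond those already in force is needed.
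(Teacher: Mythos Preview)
Your overall strategy---pathwise convergence plus an integrability argument over $\Omega$---is the same as the paper's, but your implementation has a genuine gap: the higher-moment It\^o estimate you invoke reads
\[
\mathbb{E}\Bigl(\sup_{t\in[0,T]}\|u^\varepsilon(t)\|_{L^2(D)}^{2r}\Bigr) \leq C(r)\bigl(1 + \mathbb{E}\|u_0\|_{L^2(D)}^{2r} + \|g\|_\infty^r\bigr),
\]
and this is only useful if $\mathbb{E}\|u_0\|_{L^2(D)}^{2r}<\infty$ for some $r>1$. The standing assumption (see Theorem \ref{mainthm} and Theorem \ref{thm:existence_for_approx_in_epsilon}) is merely $u_0\in L^2(\Omega;L^2(D))$, so no such moment is available, and the uniform integrability you need for Vitali does not follow. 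The regularity of the elementary process $h$ helps with the noise term but does nothing for $u_0$.

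The paper sidesteps this entirely by working with the \emph{pathwise} energy inequality \eqref{ineq:energy_ineq_for_v_varepsilon} for $v^\varepsilon$, which after integration in time gives, for a.e.\ $(\omega,t)$,
\[
\|v^\varepsilon(\omega,t)\|_{L^2(D)}^2 \lesssim \|g\|_\infty + \|u_0(\omega)\|_{L^2(D)}^2 + \int_{Q_t} m\Bigl(2\Bigl|\nabla_x\!\int_0^\tau h\diff W\Bigr|\Bigr)\diff x\diff\tau + \int_{Q_t} M\Bigl(\tau,x,2c\,\nabla_x\!\int_0^\tau h\diff W\Bigr)\diff x\diff\tau.
\]
The right-hand side is $\varepsilon$-independent and lies in $L^1(\Omega)$: the $u_0$ term because $u_0\in L^2(\Omega\times D)$, and the stochastic terms because $\nabla_x\!\int_0^\cdot h\diff W\in E_m(\Omega\times Q_T)$ by Lemma \ref{lem:stoch_integral_in_musielak_space}. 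One can then apply dominated (not Vitali) convergence on $\Omega\times(0,T)$, using the pathwise strong convergence \eqref{conv:strong_conv_in_L2_of_v}. The key point you missed is that the deterministic energy estimate for $v^\varepsilon$ already furnishes an explicit $L^1(\Omega)$ majorant, so no upgrade of the moment assumptions on $u_0$ is needed.
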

\begin{proof}
    By \eqref{conv:strong_conv_in_L2_of_v} we know that for a.e. $\omega, t\in \Omega\times [0, T]$
  $$
    \|v^\varepsilon(\omega, t)\|_{L^2(D)}\rightarrow \|v(\omega, t)\|_{L^2(D)}.
    $$
    Moreover, \eqref{ineq:energy_ineq_for_v_varepsilon} implies that
    \begin{equation*}
    \begin{split}
       &    \|v^\varepsilon(\omega,t)\|_{L^2(D)}
       \\ & \lesssim \|g\|_{\infty} + \int_{Q_t} m\left(2\left|\nabla_x\int_0^\tau h\diff W\right|\right)\diff x \diff \tau + \int_{Q_t} M\left(\tau, x, 2\,\nabla_x\int_0^\tau h\diff W\right)\diff x \diff \tau + \|u_0\|_{L^2(D)}.
    \end{split}
    \end{equation*}
    As the right-hand side of the above is integrable by Lemma \ref{lem:stoch_integral_in_musielak_space} and independent of $\varepsilon$, Lebesgue's dominated convergence theorem proves the thesis.
\end{proof}

\begin{lem}\label{lem:modular_conv_whole_prob_time_space_v_varepsilon}
    Suppose $v^\varepsilon$ is as in Lemma \ref{lem:apriori_v_epsilon}, and $v$ is given in Lemma \ref{lem:conv_v_varepsilon}. Then
    $$
    \nabla_x v^\varepsilon \xrightarrow{M} \nabla_x v\quad\text{ modularly in }L_M(\Omega\times Q_T).
    $$
\end{lem}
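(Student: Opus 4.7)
The goal is to apply the Vitali characterization of modular convergence (Proposition \ref{prop:vitali_in_musielaki}) on the product measure space $\Omega \times Q_T$. This requires two ingredients: first, convergence in measure of $\nabla_x v^\varepsilon$ to $\nabla_x v$ on $\Omega \times Q_T$; second, uniform equiintegrability of a scaled family $\{M(t, x, (\nabla_x v^\varepsilon - \nabla_x v)/\lambda)\}_{\varepsilon}$ with respect to $dP \otimes dt \otimes dx$ for some $\lambda > 0$ independent of $\omega$.

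For the convergence in measure I would first observe that Lemma \ref{lem:almost_everywhere_conv_of_nabla_v_varepsilon}, applied for each fixed $\omega$ in a set of full probability, produces (up to a subsequence depending on $\omega$) almost everywhere convergence on $Q_T$. Since the limit is uniquely determined by Lemma \ref{lem:chi_identification_by_mono_trick} and the subsequent uniqueness remark, the whole sequence converges in measure on $Q_T$ for a.e.\ $\omega$; applying the dominated convergence theorem to the bounded functional $\omega \mapsto \int_{Q_T} \min(|\nabla_x v^\varepsilon - \nabla_x v|,1)\diff x\diff t$ upgrades this to convergence in measure on $\Omega \times Q_T$.

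For the uniform equiintegrability I would repeat the strategy of Lemma \ref{lem:modular_conv_v_varepsilon}, but now in the product space. Integrating the energy inequality \eqref{ineq:energy_ineq_for_v_varepsilon} over $\Omega$ and using Lemma \ref{lem:stoch_integral_in_musielak_space} to absorb the right-hand side yields $\mathbb{E}\int_{Q_T} M(t,x,\nabla_x(v^\varepsilon+\int_0^t h\,dW))\diff x\diff t \leq C$ uniformly in $\varepsilon$. Combining Fatou's lemma with this integrated energy identity, one can replicate \eqref{conv:connvergence_of_the_product_mono_varepsilon} at the level of $\Omega \times Q_T$, so that $A(t,x,\nabla_x v^\varepsilon+\nabla_x\int_0^t h\,dW)\cdot\nabla_x(v^\varepsilon+\int_0^t h\,dW)$ converges strongly in $L^1(\Omega\times Q_T)$. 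By Dunford--Pettis this family is uniformly equiintegrable on $\Omega \times Q_T$, and the coercivity bound \ref{ass:on_A_2} transfers the equiintegrability to $\{M(t,x,\nabla_x(v^\varepsilon+\int_0^t h\,dW))\}_\varepsilon$. Two applications of convexity of $M$ together with the identity
\begin{equation*}
\frac{\nabla_x v^\varepsilon - \nabla_x v}{4} = \frac{1}{4}\bigl[\nabla_x(v^\varepsilon+\textstyle\int_0^t h\,dW) - \nabla_x(v+\int_0^t h\,dW)\bigr]
\end{equation*}
reduce the equiintegrability of $\{M(t,x,(\nabla_x v^\varepsilon - \nabla_x v)/4)\}$ to that of the three families $\{M(t,x,\nabla_x(v^\varepsilon+\int h\,dW))\}$, $\{M(t,x,\nabla_x \int h\,dW)\}$, and $\{M(t,x,\nabla_x(v+\int h\,dW))\}$; the first is handled above, the second is a single function in $L^1(\Omega\times Q_T)$ by Lemma \ref{lem:stoch_integral_in_musielak_space} and Assumption \ref{ass:growth_Young}, and the third is a single $L^1$ function (Fatou in $\varepsilon$).

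The main obstacle is exactly the upgrade from $\omega$-pointwise to product-space equiintegrability: the scale parameter $\lambda$ obtained in Lemma \ref{lem:modular_conv_v_varepsilon} a priori depends on $\omega$, and one must exhibit an $\omega$-integrable majorant in order to run Vitali globally. This is precisely where Assumption \ref{ass:growth_Young} and Lemma \ref{lem:stoch_integral_in_musielak_space} are indispensable: they guarantee that $\int_0^t h\,dW$ and its spatial gradient lie in $E_m(\Omega\times Q_T)$, providing the dominating function required to propagate the energy identity uniformly in $\omega$. Once the two Vitali hypotheses are established in $\Omega \times Q_T$, Proposition \ref{prop:vitali_in_musielaki} delivers the modular convergence with this explicit $\lambda = 4$.
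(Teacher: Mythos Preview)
Your proposal is correct, but the paper takes a more direct route. Rather than running Vitali on the full product space $\Omega\times Q_T$, the paper simply applies dominated convergence in the $\omega$-variable to the scalar function
\[
\omega\;\longmapsto\;\int_{Q_T}M\!\left(t,x,\frac{\nabla_x v^\varepsilon-\nabla_x v}{\lambda}\right)\diff x\diff t.
\]
The $\omega$-pointwise convergence to zero is exactly the content of Lemma~\ref{lem:modular_conv_v_varepsilon}, and the dominating function is extracted from the energy inequality~\eqref{ineq:energy_ineq_for_v_varepsilon} via a convexity decomposition (essentially the one you wrote) combined with Lemma~\ref{lem:stoch_integral_in_musielak_space}. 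This bypasses the detour through strong $L^1(\Omega\times Q_T)$ convergence of $A\cdot\nabla$, Dunford--Pettis on the product space, and the separate verification of convergence in measure on $\Omega\times Q_T$. Your approach is heavier but valid; its one small advantage is that it makes the product-space equiintegrability explicit, which the paper does not need but which could be useful elsewhere.

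One expository wrinkle: in your displayed identity the $\nabla_x\!\int_0^t h\,\diff W$ terms cancel, so only \emph{two} families are actually involved and $\lambda=2$ already suffices; the third family you list never enters. Both arguments ultimately rest on the point you correctly single out as the crux: Lemma~\ref{lem:stoch_integral_in_musielak_space} supplies the $\omega$-integrable majorant that makes the scale $\lambda$ uniform in $\omega$.
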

\begin{proof}
    Fix an arbitrary subsequence of $\nabla_x v^\varepsilon$ and take its subsequence which satisfies Lemma \ref{lem:almost_everywhere_conv_of_nabla_v_varepsilon}. From Lemma \ref{lem:modular_conv_v_varepsilon} we know that there exists $\lambda_1 > 0$ such that
    $$
    \int_{Q_T}M\left(t, x, \frac{\nabla_x v^\varepsilon - \nabla_x v}{\lambda_1}\right)\diff x\diff t \rightarrow 0.
    $$
    Thus, by \eqref{ineq:energy_ineq_for_v_varepsilon} and the convexity, for $\lambda_2 := 1 + \lambda_1$
    \begin{align*}
    &\int_{Q_T}M\left(t, x, \frac{\nabla_x v + \int_0^t h\diff W(s)}{\lambda_2}\right)\diff x\diff t\\
    &\leq\frac{\lambda_1}{\lambda_2}\int_{Q_T}M\left(t, x, \frac{\nabla_x v^\varepsilon - \nabla_x v}{\lambda_1}\right)\diff x\diff t + \frac{1}{\lambda_2}\int_{Q_T}M\left(t, x, \nabla_x v^\varepsilon + \int_0^t h\diff W(s)\right)\diff x\diff t\\
    & \lesssim\frac{\lambda_1}{\lambda_2}\int_{Q_T}M\left(t, x, \frac{\nabla_x v^\varepsilon - \nabla_x v}{\lambda_1}\right)\diff x\diff t + \|g\|_{\infty} + \varepsilon\int_{Q_T} m\left(2\left|\nabla_x\int_0^t h\diff W\right|\right)\diff x \diff t\\
    &\qquad + \int_{Q_T} M\left(t, x, 2\,\nabla_x\int_0^t h\diff W\right)\diff x \diff t.
    \end{align*}
    Taking $\varepsilon\to 0^+$ we obtain
    $$
        \int_{Q_T}M\left(t, x, \frac{\nabla_x v + \int_0^t h\diff W(s)}{\lambda_2}\right)\diff x\diff t \lesssim \|g\|_{\infty} + \int_{Q_T} M\left(t, x, 2\,\nabla_x\int_0^t h\diff W\right)\diff x \diff t
    $$
    Hence, for $\lambda := 1 + \lambda_2$, by \eqref{ineq:energy_ineq_for_v_varepsilon}, convexity, and the symmetry of $M$ with respect to the third variable, we can estimate
    \begin{equation*}
    \begin{split}
        &\int_{Q_T}M\left(t, x, \frac{\nabla_x v^\varepsilon - \nabla_x v}{\lambda}\right)\diff x\diff t\\
        &\leq \frac{\lambda_2}{\lambda}\int_{Q_T}M\left(t, x, \frac{\nabla_x v + \int_0^t h\diff W(s)}{\lambda_2}\right)\diff x\diff t + \frac{1}{\lambda}\int_{Q_T}M\left(t, x, \nabla_x v^\varepsilon + \int_0^t h\diff W(s)\right)\diff x\diff t\\
        &\lesssim \|g\|_{\infty} + \int_{Q_T} m\left(2\left|\nabla_x\int_0^t h\diff W\right|\right)\diff x \diff t + \int_{Q_T} M\left(t, x, 2\,\nabla_x\int_0^t h\diff W\right)\diff x  \diff t.
    \end{split}
    \end{equation*}
    As the right-hand side is integrable with respect to the probability measure due to Lemma \ref{lem:stoch_integral_in_musielak_space}, then the thesis for our subsequence follows from Lebesgue's dominated convergence Theorem. A classical subsequence argument gives us the convergence for the whole sequence.
\end{proof}

Finally, we can state a result for $u^\varepsilon$ instead of $v^\varepsilon$.

\begin{lem}
    Let $u^\varepsilon$ be as in Lemma \ref{lem:apropri_u_epsilon}, $v^\varepsilon$ as in Lemma \ref{lem:apriori_v_epsilon}, and $v$ be defined in Lemma \ref{lem:conv_v_varepsilon}. Then, there exists $u\in L^2(\Omega; L^\infty(0, T; L^2(D)))\cap L^1(\Omega\times (0, T); W^{1,1}_0(D))$ with $\nabla_x u\in L_M(\Omega\times Q_T)$ such that (up to the subsequence)
    \begin{align}
        u^\varepsilon &\wstar u &&\text{ weakly* in }L^2(\Omega; L^\infty(0, T; L^2(D))),\\
        \nabla_x u^\varepsilon &\xrightarrow{M} \nabla_x u &&\text{ modularly in }L_M(\Omega\times Q_T),\\
        u^\varepsilon &\rightharpoonup u &&\text{ weakly in }L^1(\Omega\times (0, T); W^{1,1}_0(D)).
    \end{align}
    Moreover $u := v + \int_0^t h\diff W(s)$.
\end{lem}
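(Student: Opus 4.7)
The plan is a short bookkeeping argument that transfers the already-established convergences for $v^\varepsilon \to v$ to the corresponding statements for $u^\varepsilon$, using the additive identity $u^\varepsilon = v^\varepsilon + \int_0^\cdot h\diff W$ in which the stochastic integral does not depend on $\varepsilon$. First I would check that $u := v + \int_0^\cdot h\diff W$ lies in the stated spaces. The energy inequality \eqref{ineq:energy_ineq_for_v_varepsilon} provides an $\omega$-dependent bound for $v^\varepsilon$ in $L^\infty(0,T;L^2(D))$ whose right-hand side is integrable over $\Omega$ by Lemma \ref{lem:stoch_integral_in_musielak_space}; combined with Remark \ref{rem:L_infty_behaviour_of_h_approx} this gives $u\in L^2(\Omega;L^\infty(0,T;L^2(D)))$. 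For the gradient, $\nabla_x u = \nabla_x v + \nabla_x\int_0^\cdot h\diff W$, the first term being in $L_M(\Omega\times Q_T)$ by Lemma \ref{lem:modular_conv_whole_prob_time_space_v_varepsilon}, the second in $E_m(\Omega\times Q_T)\subset L_M(\Omega\times Q_T)$ by Lemma \ref{lem:stoch_integral_in_musielak_space}.

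For the weak-$\ast$ convergence, Lemma \ref{lem:apropri_u_epsilon} bounds $\{u^\varepsilon\}_{\varepsilon>0}$ uniformly in $L^2(\Omega;L^\infty(0,T;L^2(D)))$, so Banach--Alaoglu extracts a subsequence with some weak-$\ast$ limit $\tilde{u}$. To identify $\tilde{u}=u$, I would use that the strong convergence result immediately preceding the statement gives $v^\varepsilon\to v$ in $L^2(\Omega\times Q_T)$, so $u^\varepsilon = v^\varepsilon + \int_0^\cdot h\diff W \to u$ strongly in $L^2(\Omega\times Q_T)$ as well; since strong $L^2$ convergence implies testing against any element of $L^2(\Omega;L^1(0,T;L^2(D)))$, this forces $\tilde u = u$ and by the standard subsequence argument the whole sequence converges weakly-$\ast$.

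The modular convergence of the gradients is a one-line consequence of the additive structure: $\nabla_x u^\varepsilon - \nabla_x u = \nabla_x v^\varepsilon - \nabla_x v$, so a $\lambda>0$ witnessing the modular convergence of $\nabla_x v^\varepsilon$ from Lemma \ref{lem:modular_conv_whole_prob_time_space_v_varepsilon} witnesses $\nabla_x u^\varepsilon \xrightarrow{M}\nabla_x u$ in $L_M(\Omega\times Q_T)$ verbatim. For the weak $L^1(\Omega\times(0,T);W^{1,1}_0(D))$ convergence, Proposition \ref{prop:vitali_in_musielaki} turns the modular convergence into convergence in measure together with uniform equiintegrability of $\{\nabla_x u^\varepsilon\}_{\varepsilon>0}$, which yields strong $L^1(\Omega\times Q_T)$ convergence of the gradients; together with the strong $L^2(\Omega\times Q_T)$ convergence of $u^\varepsilon\to u$ noted above, this gives strong convergence in $L^1(\Omega\times(0,T);W^{1,1}_0(D))$, and in particular weak convergence. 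The zero boundary condition passes to the limit because each $u^\varepsilon(\omega,t,\cdot)$ lies in $W^{1,1}_0(D)$ by Theorem \ref{thm:existence_for_approx_in_epsilon} and the strong $L^1$ limit preserves the trace.

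There is essentially no analytic obstacle in this step; the whole content has already been produced in Lemmas \ref{lem:apropri_u_epsilon}, \ref{lem:apriori_v_epsilon}, \ref{lem:conv_v_varepsilon}, \ref{lem:modular_conv_whole_prob_time_space_v_varepsilon} and in the preceding strong-$L^2$ lemma, together with Lemma \ref{lem:stoch_integral_in_musielak_space}. The only subtlety worth mentioning is that the earlier extractions of subsequences were carried out for $\omega$ fixed in a full-measure set; this is not a real issue here because the deterministic limit $v$ is unique (see the corollary after Lemma \ref{lem:chi_identification_by_mono_trick}), so the whole sequence $v^\varepsilon$ converges, and hence so does $u^\varepsilon$ without any further extraction.
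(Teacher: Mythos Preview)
Your argument is correct and follows essentially the same bookkeeping strategy as the paper---transfer the convergences already obtained for $v^\varepsilon$ to $u^\varepsilon$ via the additive identity $u^\varepsilon=v^\varepsilon+\int_0^\cdot h\diff W$---but the execution differs in two small ways. First, the paper extracts abstract weak-$\ast$ limits for $u^\varepsilon$ and $\nabla_x u^\varepsilon$ from the a~priori bounds of Lemma~\ref{lem:apropri_u_epsilon} and only afterwards identifies them as $v+\int_0^\cdot h\diff W$ and its gradient, invoking uniqueness of the limit together with Poincar\'e's inequality; you instead define $u$ concretely from the start and identify the weak-$\ast$ limit through the strong $L^2(\Omega\times Q_T)$ convergence of $u^\varepsilon$. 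Second, for the $L^1(\Omega\times(0,T);W^{1,1}_0(D))$ convergence the paper argues only weak compactness via de~la~Vall\'ee--Poussin/Dunford--Pettis from the $L_M$ bound, whereas you actually upgrade to strong $L^1$ convergence of the gradients through Vitali. Both routes are valid; yours is slightly more direct and gives a marginally stronger conclusion, while the paper's ordering makes the regularity of the limit $u$ emerge from the compactness step rather than from a separate verification.
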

\begin{proof}
    By Lemma \ref{lem:apropri_u_epsilon} and the Banach--Alaoglu theorem, we know that there exists $u\in L^2(\Omega; L^\infty(0, T; L^2(D)))$ with $\nabla_x u \in L_M(\Omega\times Q_T)$, such that up to the subsequence
    \begin{align*}
        u^\varepsilon &\wstar u &&\text{ weakly* in }L^2(\Omega; L^\infty(0, T; L^2(D))),\\
        \nabla_x u^\varepsilon &\wstar \nabla_x u &&\text{ weakly* in }L_M(\Omega\times Q_T).
    \end{align*}
    Here, note that the boundedness in $L_M(\Omega\times Q_T)$ implies boundedness of the integral
    $$
    \mathbb{E}\int_{Q_T}\tilde{m}(|\nabla_x u^\varepsilon|)\diff x\diff t,
    $$
    for a Young function $\tilde{m}$, which is superlinear. Thus, de la Vall\'{e}e--Poussin criterion, together with Dunford--Pettis theorem give us
    \begin{align*}
    u^\varepsilon &\rightharpoonup u &&\text{ weakly in }L^1(\Omega\times (0, T); W^{1,1}_0(D)).
    \end{align*}
    At last note that by definition $u^\varepsilon = v^\varepsilon + \int_0^t h\diff W(s)$. Hence by Lemma \ref{lem:modular_conv_whole_prob_time_space_v_varepsilon}
    $$
      \nabla_x u^\varepsilon \xrightarrow{M} \nabla_x v + \nabla_x\int_0^t h\diff W(s) \quad \text{ modularly in }L_M(\Omega\times Q_T).
    $$
    By the uniqueness of the limit
    $$
        \nabla_x u = \nabla_x v + \nabla_x \int_0^t h\diff W(s),
    $$
    and by Poincar\'{e}'s inequality and boundary conditions, this implies
    $$
        u = v + \int_0^t h\diff W(s).
    $$
\end{proof}
The last result is enough to pass to the limit in the equation \eqref{eq:approx_problem} on $\Omega\times Q_T$ and to conclude that $u$ is a solution to \eqref{eq:main_sys} in the sense of Definition \ref{def:solution_to_main_system} for an additive $h\in \mathcal{S}^2_W(0, T; W^{s,2}_0(D))$ as defined at the beginning of this section.

\subsection{Convergence with the approximation on $h^N$}

Here, we come back to our labels from the beginning of the section. In the following let $\{h^N_n\}_{n\in\N}\subset \mathcal{S}^2_W(0, T; W^{s,2}_0(D))$ be a sequence of functions converging to $h^N$ in $\mathcal{N}^2_W(0, T; L^2(D))$. Denote by $u^\varepsilon_n$ a solution to \eqref{eq:approx_problem} for $h^N_n$ and by $u^\varepsilon_m$ the analogous quantity for $h^N_m$. Then, by It\^{o}'s Theorem~\eqref{thm:orlicz_ito_formula} for $-\alpha(t, x) = A^\varepsilon(t, x, \nabla_x u_n) - A^\varepsilon(t, x, \nabla_x u_m)$ we obtain
\begin{equation*}
    \begin{split}
        &\|(u^\varepsilon_n - u^\varepsilon_m)(t)\|^2_{L^2(D)} + 2\int_0^t\int_D(A(s, x, \nabla_x u^\varepsilon_n) - A(s, x, \nabla_x u^\varepsilon_m))\cdot\nabla_x (u^\varepsilon_n(s,x) - u^\varepsilon_m(s,x))\diff x\diff s\\
        &\phantom{=} \leq\int_0^t\|h^N_n - h^N_m\|^2_{L_2(U; L^2(D))}\diff s + 2\int_0^t\langle u^\varepsilon_n(s) - u^\varepsilon_m(s), h^N_n(s) - h^N_m(s)\diff W(s)\rangle_2.
    \end{split}
\end{equation*}
Similarly as for \eqref{44bis} we can estimate
\begin{align*}
    &\mathbb{E}\left(\sup_{t\in[0, T]}\left|\int_0^t\langle u^\varepsilon_n(s) - u^\varepsilon_m(s), h^N_n(s) - h^N_m(s)\diff W(s)\rangle_2\right|\right)\\
    &\quad\leq \frac{1}{4}\mathbb{E}(\sup_{t\in[0, T]}\|u^\varepsilon_n - u^\varepsilon_m\|^2_{L^2(D)}) + C\mathbb{E}\int_0^T\|h^N_n - h^N_m\|_{L_2(U; L^2(D))}^2\diff t.
\end{align*}
Thus
\begin{equation}\label{250207_01}
    \begin{split}
        &\mathbb{E}(\sup_{t\in[0, T]}\|(u^\varepsilon_n - u^\varepsilon_m)(t)\|^2_{L^2(D)})\\
        &\qquad+ \mathbb{E}\int_0^T\int_D(A(t, x, \nabla_x u^\varepsilon_n) - A(t, x, \nabla_x u^\varepsilon_m))\cdot\nabla_x (u^\varepsilon_n(t,x) - u^\varepsilon_m(t,x))\diff x\diff t\\
        &\phantom{=} \lesssim \int_0^T\|h^N_n - h^N_m\|^2_{L_2(U; L^2(D))}\diff t.
    \end{split}
\end{equation}
By the weak* lower semicontinuity of the $L^2(\Omega; L^\infty(0, T; L^2(D)))$ norm and the Fatou's lemma, we may pass to the limit with $\varepsilon\rightarrow 0^+$ in \eqref{250207_01} to obtain
\begin{equation}\label{ineq:for_conv_with_approx_in_h_n}
    \begin{split}
        &\mathbb{E}(\sup_{t\in[0, T]}\|(u_n - u_m)(t)\|^2_{L^2(D)})\\
        &\qquad+ \mathbb{E}\int_0^T\int_D(A(t, x, \nabla_x u_n) - A(t, x, \nabla_x u_m))\cdot\nabla_x (u_n(t,x) - u_m(t,x))\diff x\diff t\\
        &\phantom{=} \lesssim \int_0^T\|h^N_n - h^N_m\|^2_{L_2(U; L^2(D))}\diff t.
    \end{split}
\end{equation}
Now we can proceed to converging with $n\to +\infty$. As the right-hand side of \eqref{ineq:for_conv_with_approx_in_h_n} goes to $0$ as $n, m\to +\infty$, $\{u_n\}$ is a Cauchy sequence in $L^2(\Omega; L^\infty(0, T; L^2(D)))$. Moreover, by considering simply the equation for $u_n^\varepsilon$, we can get an analogous energy inequality (one can imagine putting $u_m = 0$ in the above), hence we can deduce that the sequences $\{\nabla_x u_n\}_{n\in \N}$, $\{A(t, x, \nabla_x u_n)\}_{n\in \N}$ are weakly* compact in $L_{M}(\Omega\times Q_T)$, $L_{M^*}(\Omega\times Q_T)$ respectively. \\

Thus, there exists $u\in L^2(\Omega; L^\infty(0, T; L^2(D)))$ with $\nabla_x u\in L_M(\Omega\times Q_T)$ and a $\chi\in L_{M^*}(\Omega\times Q_T)$ such that
\begin{align}
     u_n &\rightarrow u &&\text{ strongly in }L^2(\Omega; L^\infty(0, T; L^2(D))),\label{conv:strong_conv_in_n_u_n}\\
     \nabla_x u_n &\wstar \nabla_x u &&\text{ weakly* in }L_M(\Omega\times Q_T),\\
     A(t, x, \nabla_x u_n) &\wstar \chi &&\text{ weakly* in }L_{M^*}(\Omega\times Q_T).
\end{align}
Again, we need to identify $\chi$ with $A(t, x, \nabla_x u)$ to conclude. To this end, we are going to prove a lemma concerning the energy equality for \eqref{eq:weak_form_final}.

\begin{lem}\label{lem:trunc_energy_eq}
    Let $M$ be an N-function satisfying Assumption \ref{ass:N_func_M}.\\
    Suppose $u\in L^2(\Omega; L^\infty(0, T; L^2(D)))\cap L^1(\Omega; L^1(0, T; W^{1,1}_0(D))$ with $\nabla_x u\in L_{M}(\Omega\times Q_T)$ satisfies almost surely the equation
    \begin{align}\label{eq:some_eq_for_trunck_lemma_weak}
        u(t) = u_0 + \int_0^t \DIV_x \alpha\diff s + \int_0^t h\diff W(s)\quad \text{ in }\mathcal{D}'(D),
    \end{align}
    for $\alpha\in L_{M^*}(\Omega\times Q_T)$ and $h\in\mathcal{N}^2_W(0, T; L^2(D))$. Then, for $G_{k}$, $T_{k}$ defined in \eqref{def:trunc}, \eqref{def:primitive_trunc}, the following energy equality holds almost surely
    \begin{multline*}
        \int_{D}G_{k}(u(t))\diff x - \int_{D}G_{k}(u_0)\diff x = -\int_0^t\int_D \alpha(s, x)\cdot \nabla_x T_{k}(u)\diff x\diff s\\
        + \int_0^t\langle T_{k}(u), h\diff W(s)\rangle_2 + \int_0^t\sum_{j = 1}^\infty \int_D T'_{k}(u)\,|h(e_j)|^2 \diff x \diff s.
    \end{multline*}
\end{lem}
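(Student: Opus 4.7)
The plan is to apply Itô's formula to the smoothed primitive $\int_D G_{k,\delta}(u)\,dx$ and then let $\delta \to 0^+$. By construction in \eqref{def:trunc_smooth}, $T_{k,\delta}\in C^2(\R)$ with $|T_{k,\delta}'|\le 1$ and $|T_{k,\delta}''|\le C\delta^{-1}$, and $G_{k,\delta}$ (its primitive) is then $C^2$ with bounded first and second derivatives. On $L^2(D)$ the functional $\Phi_\delta(v):=\int_D G_{k,\delta}(v)\,dx$ is twice Fréchet differentiable with $\Phi_\delta'(v)=T_{k,\delta}(v)\in L^\infty(D)\subset L^2(D)$ and $\Phi_\delta''(v)(w_1,w_2)=\int_D T_{k,\delta}'(v)w_1w_2\,dx$, which is a bounded bilinear form on $L^2(D)$ since $|T_{k,\delta}'|\le 1$.

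First, I would apply Theorem~\ref{thm:orlicz_ito_formula} (the appendix's Itô formula in Orlicz spaces) to $\Phi_\delta(u(\cdot))$; this is possible because $\alpha\in L_{M^*}(Q_T)$, $\nabla_x u\in L_M(Q_T)$, $h\in\mathcal{N}_W^2(0,T;L^2(D))$, and $T_{k,\delta}(u)$ is bounded and lies in $W^{1,1}_0(D)$ almost surely (since $T_{k,\delta}(0)=0$ and $T_{k,\delta}$ is Lipschitz). After integrating by parts in the pairing of $L_M$ and $L_{M^*}$ — valid because $\nabla_x T_{k,\delta}(u)=T_{k,\delta}'(u)\nabla_x u$ belongs to $L_M(Q_T)$ and $T_{k,\delta}(u)\in W^{1,1}_0(D)\cap L^\infty(D)$ — I obtain almost surely
\begin{multline*}
\int_D G_{k,\delta}(u(t))\,dx - \int_D G_{k,\delta}(u_0)\,dx = -\int_0^t\!\!\int_D \alpha\cdot \nabla_x T_{k,\delta}(u)\,dx\,ds \\
+\int_0^t \langle T_{k,\delta}(u),\, h\,dW(s)\rangle_2 + \tfrac12\int_0^t\sum_{j=1}^\infty\int_D T_{k,\delta}'(u)|h(e_j)|^2\,dx\,ds.
\end{multline*}

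Next, I pass to the limit $\delta\to 0^+$ term by term. Since $T_{k,\delta}\to T_k$ uniformly on $\R$ and $|G_{k,\delta}-G_k|\le C\delta$, the left-hand side converges to $\int_D G_k(u(t))\,dx-\int_D G_k(u_0)\,dx$. For the deterministic term, $T_{k,\delta}'(z)\to \mathbf{1}_{\{|z|<k\}}=T_k'(z)$ a.e., so $\nabla_x T_{k,\delta}(u)=T_{k,\delta}'(u)\nabla_x u \to \nabla_x T_k(u)$ a.e., dominated by $|\nabla_x u|$; since $\alpha\cdot\nabla_x u\in L^1(Q_T)$ almost surely by the Fenchel-Young inequality, dominated convergence delivers the desired limit $-\int_0^t\!\int_D \alpha\cdot\nabla_x T_k(u)\,dx\,ds$. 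For the stochastic term, $\|T_{k,\delta}(u)-T_k(u)\|_{L^2(D)}\to 0$ uniformly in $(\omega,t)$, so the Itô isometry yields convergence in $L^2(\Omega)$, and a subsequence converges almost surely. For the quadratic-variation term, $T_{k,\delta}'(u)|h(e_j)|^2\to T_k'(u)|h(e_j)|^2$ a.e., dominated by $|h(e_j)|^2$ with $\sum_j|h(e_j)|^2=\|h\|_{HS}^2\in L^1(0,T)$ a.s.\ by assumption, so dominated convergence again applies.

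The main obstacle is Step~1: invoking an Itô formula for the \emph{nonquadratic} functional $\Phi_\delta$ when $u$ only lives in the Musielak-Orlicz class $L_M(Q_T)$ for its gradient and one has merely $\alpha\in L_{M^*}$ without access to a reflexive duality. This is precisely the role of the appendix's Itô formula in Orlicz spaces (Theorem~\ref{thm:orlicz_ito_formula}); alternatively, one can localise by observing that $\Phi_\delta$ is globally bounded and $C^2$ on $L^2(D)$, so that once $u\in\mathcal{C}([0,T];L^2(D))$ is treated as an $L^2(D)$-valued semimartingale with drift $\DIV_x\alpha$ interpreted in a dual Sobolev space via the integration-by-parts pairing above, the variational Itô formula of Krylov-Rozovski style applies. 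A secondary, routine subtlety is the integration by parts in $L_M$-$L_{M^*}$, handled by approximating $\alpha$ in $E_{M^*}$ or by truncating and then using the boundedness of $T_{k,\delta}(u)$.
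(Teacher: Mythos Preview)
Your overall strategy (Itô for $G_{k,\delta}$ then $\delta\to 0^+$) matches the paper's, but Step~1 has a genuine gap. Theorem~\ref{thm:orlicz_ito_formula} is stated and proved \emph{only} for the quadratic functional $v\mapsto \|v\|_{L^2(D)}^2$; it does not provide an Itô formula for a general $C^2$ functional such as $\Phi_\delta(v)=\int_D G_{k,\delta}(v)\,\diff x$. Your fallback to a Krylov--Rozovskii/variational Itô formula does not work either: the drift $\DIV_x\alpha$ is not $L^2(D)$-valued, and there is no Gelfand triple $V\hookrightarrow L^2(D)\hookrightarrow V^*$ available here since $L_M(Q_T)$ neither factorises in time nor has the right duality with $L_{M^*}$ (these are exactly problems (P1)--(P2) highlighted in the introduction and at the start of Appendix~A). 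The integration by parts you invoke to rewrite $\langle \DIV_x\alpha,\,T_{k,\delta}(u)\rangle$ as $-\int_D\alpha\cdot\nabla_x T_{k,\delta}(u)\,\diff x$ is precisely the step that needs justification and cannot simply be asserted.

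The paper circumvents this as follows: mollify the equation in space with a kernel $\rho_\kappa$ and multiply by a cut-off $\psi\in C_c^\infty(D)$, so that the equation for $u^\kappa\psi$ lives in $W^{s,2}_0(D)$ with a genuinely $W^{s,2}_0(D)$-valued drift $(\DIV_x\alpha^\kappa)\psi$. Then the \emph{classical} Itô formula (Proposition~\ref{prop:standard_ito}) applies to $F(v)=\int_D G_{k,\delta}(v)\,\diff x$ on $W^{s,2}_0(D)$. The limit $\kappa\to 0^+$ in the divergence term uses Proposition~\ref{prop:double_mollifier_conv} (which is where Assumption~\ref{ass:N_func_M} and the truncation structure are essential to get modular convergence of $\nabla_x(T_{k,\delta}(u^\kappa)\psi)^\kappa$ in $L_M$), and the limit $\psi\to 1$ uses Proposition~\ref{prop:conv_of_localization}. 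Only after these two limits does one let $\delta\to 0^+$, which is indeed routine dominated convergence as you describe. So your $\delta\to 0^+$ argument is fine, but you are missing the mollification/localisation layer that makes an Itô formula available in the first place.
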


\begin{proof}
Let $0\leq \psi\leq 1$, $\psi\in C_c^\infty(D)$ be an arbitrary function and $\rho_{\kappa}$ be a family of standard mollification kernels in the space variable. For arbitrary function $f$ we denote in the following $f^\kappa = \rho_{\kappa}*f$. Then, we can mollify \eqref{eq:some_eq_for_trunck_lemma_weak} and multiply it by $\psi$ to get
\begin{align}\label{eq:weak_form_to_use_trunc_ito}
    u^\kappa(t)\psi(x) = u_0^\kappa\psi(x) + \int_0^t(\DIV_x \alpha^\kappa(s,x))\psi(x)\diff s + \left(\int_0^t h\diff W(s)\right)^\kappa\psi(x).
\end{align}
Notice that the operator $L^2(D)\ni f \mapsto f^\kappa\psi\in W^{s, 2}_0(D)$ is linear and bounded (where $s$ is big enough so that $W^{s,2}_0(D)\hookrightarrow L^\infty(D)$). Hence, by \cite[Lemma 2.4.1]{Liu2015Stochastic}
\begin{align}\label{eq:mollifier_into_stoch_int}
    \left(\int_0^t h\diff W(s)\right)^\kappa\psi(x) = \int_0^t h^\kappa\psi\diff W(s),
\end{align}
where $h^\kappa\psi \in \mathcal{N}^2_W(0, T; W^{s, 2}_0(D))$. Let $G_{k,\delta}$, $T_{k,\delta}$ be as in \eqref{def:trunc_smooth}. Define
$$
F: W^{s,2}_0(D) \rightarrow \R,\qquad F(u) = \int_{D}G_{k,\delta}(u)\diff x.
$$
Then 
$$
DF: W^{s, 2}_0(D) \rightarrow \mathcal{L}(W^{s,2}_0(D); \R),\qquad \langle DF(u), f\rangle = \int_D T_{k,\delta}(u)\,f\diff x
$$ 
and 
$$
D^2F: W^{s,2}_0(D):\rightarrow \mathcal{L}(W^{s,2}_0(D); W^{s, 2}_0(D)),\qquad \langle D^2 F(u)(f),g\rangle = \int_D T'_{k,\delta}(u)\,f\,g\diff x.
$$
It is easy to check that $F$, $DF$, $D^2 F$ are globally Lipschitz, and thus uniformly continuous. In consequence, we may use the classical It\^{o}'s lemma \ref{prop:standard_ito} in the equation \eqref{eq:weak_form_to_use_trunc_ito} to obtain almost surely
\begin{multline}\label{eq:trunc_ito_kappa_psi}
    \int_{D}G_{k,\delta}(u^\kappa(t)\psi(x))\diff x - \int_{D}G_{k,\delta}(u_0^\kappa\psi(x))\diff x = \int_0^t\int_D \DIV_x \alpha^\kappa(s, x)\, T_{k,\delta}(u^\kappa)\psi(x)\diff x\diff s\\
    + \int_0^t\langle T_{k,\delta}(u^\kappa), h^\kappa\psi\diff W(s)\rangle_2 + \int_0^t\sum_{j = 1}^\infty \langle D^2F(u^\kappa)(h^\kappa\psi)(e_j), h^\kappa\psi(e_j)\rangle\diff s.
\end{multline}
Our goal now is to converge with $\kappa\to 0^+$ and $\psi\to 1$, $\delta\to 0^+$ in this order. By Lebesgue's dominated convergence theorem
\begin{align*}
    \int_{D}G_{k,\delta}(u^\kappa(t)\psi(x))\diff x - \int_{D}G_{k,\delta}(u_0^\kappa\psi(x))\diff x  \xrightarrow {\kappa \to 0^+}\int_{D}G_{k,\delta}(u(t)\psi(x))\diff x - \int_{D}G_{k,\delta}(u_0\psi(x))\diff x,
\end{align*}
and after integrating by parts and using Proposition \ref{prop:double_mollifier_conv}
\begin{multline*}
    \int_0^t\int_D \DIV_x \alpha^\kappa(s, x)\, T_{k,\delta}(u^\kappa)\psi(x)\diff x\diff s = -\int_0^t\int_D \alpha(s, x)\cdot(\nabla_x (T_{k,\delta}(u^\kappa)\psi)^\kappa\diff x\diff s\\
    \rightarrow -\int_0^t\int_D \alpha(s, x)\cdot\nabla_x (T_{k,\delta}(u)\psi)\diff x\diff s .
\end{multline*}
Thus we only need to treat the stochastic terms. First, we will show that
\begin{align}\label{conv:kappa_stoch_term_conv_1}
    \int_0^t\langle T_{k,\delta}(u^\kappa), h^\kappa\psi\diff W(s)\rangle_2 \rightarrow \int_0^t\langle T_{k,\delta}(u), h\psi\diff W(s)\rangle_2 \quad \mbox{ as }\kappa \to 0^+.
\end{align}
Denote $\mathbb{M}(t) := \int_0^t h\diff W(s)$. Then, by It\^{o}'s isometry \cite[Proposition 2.3.5]{Liu2015Stochastic}
\begin{equation}\label{conv:kappa_stoch_term_conv_1_1}
    \begin{split}
        &\left\|\int_0^t\langle T_{k,\delta}(u^\kappa), h^\kappa\psi\diff W(s)\rangle_2 - \int_0^t\langle T_{k,\delta}(u^\kappa), h\psi\diff W(s)\rangle_2\right\|_{T}^2\\
        &\phantom{=}= \mathbb{E}\int_0^T\sum_{j\in\N}\langle T_{k,\delta}(u^\kappa), (h(e_j))^\kappa\psi - h(e_j)\psi\rangle^2\diff t\\
        &\phantom{=}=\mathbb{E}\int_0^T\sum_{j\in\N}\left(\int_D T_{k,\delta}(u^\kappa)((h(e_j))^\kappa\psi - h(e_j)\psi)\diff x \right)^2\diff t\\
        &\lesssim\mathbb{E}\int_0^T\sum_{j\in\N} \|(h(e_j))^\kappa\psi - h(e_j)\psi\|_{L^2(D)}^2\diff t\\
        &\phantom{=}=\|h^\kappa\psi - h\psi\|_T^2\\
        &\phantom{=}=\left\|\int_0^th^\kappa\psi\diff W(s) - \int_0^t h\psi\diff W(s)\right\|^2_{\mathcal{M}_T^2(L^2(D))}\\
        &
         \phantom{=}=\left\|\mathbb{M}(t)^\kappa\psi - \mathbb{M}(t)\psi\right\|^2_{\mathcal{M}_T^2(L^2(D))}\\
        &
        \phantom{=}=\mathbb{E}\|\mathbb{M}(T)^\kappa\psi - \mathbb{M}(T)\psi\|_{L^2(D)}^2\rightarrow 0,
    \end{split}
\end{equation}
where in the second to last equality we have used \eqref{eq:mollifier_into_stoch_int} and the convergence at the end is the consequence of the standard properties of the mollifier and Lebesgue's dominated convergence theorem. Similarly
\begin{equation}\label{conv:kappa_stoch_term_conv_1_2}
    \begin{split}
        &\left\|\int_0^t\langle T_{k,\delta}(u^\kappa), h\psi\diff W(s)\rangle_2 - \int_0^t\langle T_{k,\delta}(u), h\psi\diff W(s)\rangle_2\right\|_{T}^2\\
        &\phantom{=}= \mathbb{E}\int_0^T\sum_{j\in\N}\langle T_{k,\delta}(u^\kappa) - T_{k,\delta}(u), h(e_j)\psi\rangle^2\diff t\\
        &\phantom{=}=\mathbb{E}\int_0^T\sum_{j\in\N}\left(\int_D (T_{k,\delta}(u^\kappa) - T_{k,\delta}(u))\psi\,h(e_j)\diff x \right)^2\diff t\\
        &\leq\mathbb{E}\int_0^T\sum_{j\in\N} \|(T_{k,\delta}(u^\kappa) - T_{k,\delta}(u))\psi\|^2_{L^2(D)}\|h(e_j)\|_{L^2(D)}^2\diff t\rightarrow 0,
    \end{split}
\end{equation}
where the last convergence is true due to Lebesgue's dominated convergence theorem and standard properties of mollifiers. Combining \eqref{conv:kappa_stoch_term_conv_1_1} and \eqref{conv:kappa_stoch_term_conv_1_2} we get
\begin{align*}
    \left\|\int_0^t\langle T_{k,\delta}(u^\kappa), h^\kappa\psi\diff W(s)\rangle_2 - \int_0^t\langle T_{k,\delta}(u), h\psi\diff W(s)\rangle_2\right\|_{T}^2\to 0,
\end{align*}
which implies \eqref{conv:kappa_stoch_term_conv_1}. Our second claim is
\begin{align}\label{conv:kappa_stoch_term_conv_2}
    \int_0^t\sum_{j = 1}^\infty \langle D^2F(u^\kappa)(h^\kappa\psi)(e_j), h^\kappa\psi(e_j)\rangle\diff s \rightarrow \int_0^t\sum_{j = 1}^\infty \langle D^2F(u)(h\psi)(e_j), h\psi(e_j)\rangle\diff s.
\end{align}
To prove it note that, similarly as in \eqref{conv:kappa_stoch_term_conv_1_1}, by standard Cauchy-Schwarz inequality in $HS(L^2(D)))$ and Young's convolution inequality
\begin{equation}\label{conv:kappa_stoch_term_conv_2_1}
    \begin{split}
        &\mathbb{E}\left|\int_0^t\sum_{j \in\N} \langle D^2F(u^\kappa)(h^\kappa\psi)(e_j), h^\kappa\psi(e_j)\rangle\diff s - \int_0^t\sum_{j = 1}^\infty \langle D^2F(u^\kappa)(h^\kappa\psi)(e_j), h(e_j)\psi\rangle\diff s\right|\\
        &\phantom{=}= \mathbb{E}\left|\int_0^t\sum_{j \in\N} \int_D T'_{k,\delta}(u^\kappa)\,(h(e_j))^\kappa\psi((h(e_j))^\kappa\psi - h(e_j)\psi)\diff x\diff s\right|\\
        &\leq\mathbb{E}\int_0^t\|T'_{k,\delta}(u^\kappa)\,h^\kappa\psi\|_{HS(L^2(D)))}\|h^\kappa\psi - h\psi\|_{HS(L^2(D)))}\diff s\\
        &\lesssim \sqrt{\mathbb{E}\int_0^T\|h\|^2_{HS(L^2(D)))}\diff t}\sqrt{\mathbb{E}\int_0^T\|h^\kappa\psi - h\psi\|_{HS(L^2(D)))}^2\diff t}\\
        &\phantom{=}= \|h\|_T\|h^\kappa\psi - h\psi\|_T \rightarrow 0.
    \end{split}
\end{equation}
In the same way we can show
\begin{equation}\label{conv:kappa_stoch_term_conv_2_2}
    \mathbb{E}\left|\int_0^t\sum_{j \in\N} \langle D^2F(u^\kappa)(h^\kappa\psi)(e_j), h(e_j)\psi\rangle\diff s - \int_0^t\sum_{j = 1}^\infty \langle D^2F(u^\kappa)(h\psi)(e_j), h(e_j)\psi\rangle\diff s\right| \rightarrow 0.
\end{equation}
On the other hand
\begin{equation}\label{conv:kappa_stoch_term_conv_2_3}
    \begin{split}
        &\mathbb{E}\left|\int_0^t\sum_{j \in\N} \langle D^2F(u^\kappa)(h\psi)(e_j), h(e_j)\psi\rangle\diff s - \int_0^t\sum_{j = 1}^\infty \langle D^2F(u)(h\psi)(e_j), h(e_j)\psi\rangle\diff s\right|\\
        &\phantom{=}= \mathbb{E}\left|\int_0^t\sum_{j \in\N} \int_D (T'_{k,\delta}(u^\kappa) - T'_{k,\delta}(u))\,|h(e_j)|^2\,\psi^2\diff x\diff s\right| \rightarrow 0,
    \end{split}
\end{equation}
by Lebesgue's dominated convergence theorem. Combining \eqref{conv:kappa_stoch_term_conv_2_1} - \eqref{conv:kappa_stoch_term_conv_2_3} we obtain \eqref{conv:kappa_stoch_term_conv_2}. Hence, after the convergence with $\kappa\to 0^+$ in \eqref{eq:trunc_ito_kappa_psi}
\begin{multline*}
    \int_{D}G_{k,\delta}(u(t)\psi(x))\diff x - \int_{D}G_{k,\delta}(u_0\psi(x))\diff x = -\int_0^t\int_D  \alpha(s, x)\cdot\nabla_x( T_{k,\delta}(u)\psi(x))\diff x\diff s\\
    + \int_0^t\langle T_{k,\delta}(u), h\psi\diff W(s)\rangle_2 + \int_0^t\sum_{j = 1}^\infty \langle D^2F(u)(h\psi)(e_j), h\psi(e_j)\rangle\diff s.
\end{multline*}
To get rid of the localization we can notice that every term which does not contain $\nabla_x\psi$ can be easily dealt with by the virtue of Lebesgue's dominated convergence theorem. On the other hand, the only term which contains $\nabla_x\psi$
$$
\int_0^t\int_D  T_{k,\delta}(u)\, \alpha(s, x)\cdot\nabla_x\psi(x)\diff x\diff s,
$$
converges to $0$ by Proposition \ref{prop:conv_of_localization}. At last, the convergence with $\delta\to 0^+$ in every term is done simply by Lebesgue's dominated convergence theorem. Thus, we conclude the proof of our thesis.
\end{proof}

Having the above we may use monotonicity trick to identify $\chi$ with $A(t, x, \nabla_x u)$. Indeed, by Lemma \ref{lem:trunc_energy_eq} applied to the equation for $u_n$ and $u$ respectively we obtain
\begin{multline}\label{eq:mono_trick_for_u_n_energy}
        \int_{D}G_{k}(u_n(t))\diff x - \int_{D}G_{k}(u_0)\diff x = -\int_0^t\int_D A(s, x, \nabla_x u_n)\cdot \nabla_x T_{k}(u_n)\diff x\diff s\\
        + \int_0^t\langle T_{k}(u_n), h^N_n\diff W(s)\rangle_2 + \int_0^t\sum_{j = 1}^\infty \int_D T'_{k}(u_n)\,|h^N_n(e_j)|^2 \diff s,
\end{multline}
and
\begin{multline}\label{eq:mono_trick_for_u_n_limit_energy}
        \int_{D}G_{k}(u(t))\diff x - \int_{D}G_{k}(u_0)\diff x = -\int_0^t\int_D \chi(s, x)\cdot \nabla_x T_{k}(u)\diff x\diff s\\
        + \int_0^t\langle T_{k}(u), h^N\diff W(s)\rangle_2 + \int_0^t\sum_{j = 1}^\infty \int_D T'_{k}(u)\,|h^N(e_j)|^2 \diff s.
\end{multline}
To converge with $n\to +\infty$ notice that as long as $u_n\to u$ almost everywhere and $h^N_n \to h^N$ strongly in $\mathcal{N}^2_W(0, T; L^2(D))$, then we can simply repeat the arguments given in \eqref{conv:kappa_stoch_term_conv_1_1} - \eqref{conv:kappa_stoch_term_conv_1_2}, \eqref{conv:kappa_stoch_term_conv_2_1} - \eqref{conv:kappa_stoch_term_conv_2_3} to conclude
\begin{align*}
    &\int_{D}G_{k}(u_n(t))\diff x - \int_{D}G_{k}(u_0)\diff x \rightarrow \int_{D}G_{k}(u(t))\diff x - \int_{D}G_{k}(u_0)\diff x,\\
    &\int_0^t\langle T_{k}(u_n), h^N_n\diff W(s)\rangle_2 \rightarrow \int_0^t\langle T_{k}(u), h^N\diff W(s)\rangle_2,\\
    &\int_0^t\sum_{j = 1}^\infty \int_D T'_{k}(u_n)\,|h^N_n(e_j)|^2 \diff s \rightarrow \int_0^t\sum_{j = 1}^\infty \int_D T'_{k}(u)\,|h^N(e_j)|^2 \diff s,
\end{align*}
which applied to \eqref{eq:mono_trick_for_u_n_energy} gives us
\begin{multline*}
    \limsup_{n\to+\infty}\int_0^t\int_D A(s, x, \nabla_x u_n)\cdot \nabla_x T_{k}(u_n)\diff x\diff s \leq -\int_{D}G_{k}(u(t))\diff x + \int_{D}G_{k}(u_0)\diff x\\
    + \int_0^t\langle T_{k}(u), h^N\diff W(s)\rangle_2 + \int_0^t\sum_{j = 1}^\infty \int_D T'_{k}(u)\,|h^N(e_j)|^2 \diff s.
\end{multline*}
With the use of \eqref{eq:mono_trick_for_u_n_limit_energy} on the above we conclude
\begin{align}\label{ineq:mono_trick_for_u_n_limsup_ineq}
    \limsup_{n\to+\infty}\int_0^t\int_D A(s, x, \nabla_x u_n)\cdot \nabla_x T_{k}(u_n)\diff x\diff s \leq \int_0^t\int_D \chi(s, x)\cdot \nabla_x T_{k}(u)\diff x\diff s.
\end{align}
Now denote $\chi_k := \chi\,\mathbf{1}_{\{|u|\leq k\}}$. Then, by the monotonicity assumption on $A$ \ref{ass:on_A_3} we have for arbitrary $\eta\in L^\infty(Q_T)$
\begin{align*}
    \int_0^T\int_D (A(t, x, \nabla_x T_k(u_n)) - A(t, x, \eta)(\nabla_x T_k(u_n) - \eta)\diff x\diff t \geq 0.
\end{align*}
By Proposition \ref{prop:stress_bound_for_bounded} and the weak* convergences of $A(t, x \nabla_x u_n)$, $\nabla_x u_n$ and the almost everywhere convergence of $u_n$ we can converge in every mixed term above, and by \eqref{ineq:mono_trick_for_u_n_limsup_ineq} we can treat the product term to obtain
\begin{align*}
    \int_0^T\int_D (\chi_k - A(t, x, \eta)(\nabla_x T_k(u) - \eta)\diff x\diff t \geq 0.
\end{align*}
Hence, by the monotonicity trick \ref{res:monot_trick} $\chi_k = A(t, x, \nabla_x T_k(u))$ almost everywhere for any $k\in \N$, and in consequence $\chi = A(t, x, \nabla_x u)$.

\subsection{Convergence with the approximation on $h$.} In the following let $\{h^N\}_{N\in\N}\subset \mathcal{N}^2_W(0, T; L^2(D))$ be a sequence of functions converging to $h$ in $\mathcal{N}^2_W(0, T; L^2(D))$. Denote by $u^{N_1}$ a solution to \eqref{eq:main_sys} for $h^{N_1}$ and by $u^{N_2}$ the analogous quantity for $h^{N_2}$. Note that similarly to \eqref{ineq:for_conv_with_approx_in_h_n} we can obtain
\begin{equation*}
        \mathbb{E}(\sup_{t\in[0, T]}\|(u^{N_1}_n - u_n^{N_2})(t)\|^2_{L^2(D)}) \lesssim \int_0^T\|h^{N_1}_n - h^{N_2}_n\|^2_{HS(L^2(D))}\diff t.
\end{equation*}
Due to \eqref{conv:strong_conv_in_n_u_n} we can converge in the above with $n\to +\infty$ to get
\begin{align}\label{1}
    \mathbb{E}(\sup_{t\in[0, T]}\|(u^{N_1} - u^{N_2})(t)\|^2_{L^2(D)}) \lesssim \int_0^T\|h^{N_1} - h^{N_2}\|^2_{HS(L^2(D))}\diff t.
\end{align}
Thus $\{u^N\}_{N\in\N}$ is a Cauchy sequence in $L^2(\Omega; L^\infty(0, T; L^2(D)))$, and there exists $u$, such that
\begin{align}\label{conv:strong_conv_in_N_u_N}
    u^N\rightarrow u\quad\text{ strongly in }L^2(\Omega; L^\infty(0, T; L^2(D))).
\end{align}
Moreover, by applying Lemma \ref{lem:trunc_energy_eq} we deduce
\begin{multline*}
        \int_{D}G_{k}(u^N(t))\diff x - \int_{D}G_{k}(u_0)\diff x = -\int_0^t\int_D A(s, x, \nabla_x u^N)\cdot \nabla_x T_{k}(u^N)\diff x\diff s\\
        + \int_0^t\langle T_{k}(u^N), h^N\diff W(s)\rangle_2 + \int_0^t\sum_{j = 1}^\infty \int_D T'_{k}(u^N)\,|h^N(e_j)|^2 \diff s,
\end{multline*}
Thus
\begin{equation}\label{2}
\begin{split}
    &\int_0^t\int_D A(s, x, \nabla_x T_k(u^N))\cdot \nabla_x T_{k}(u^N)\diff x\diff s = \int_0^t\int_D A(s, x, \nabla_x u^N)\cdot \nabla_x T_{k}(u^N)\diff x\diff s\\
    &\leq \int_{D}G_{k}(u_0)\diff x + \int_0^t\langle T_{k}(u^N), h^N\diff W(s)\rangle_2 + \int_0^t\sum_{j = 1}^\infty \int_D T'_{k}(u^N)\,|h^N(e_j)|^2 \diff s \\
    &\leq \frac{1}{2}\int_D u_0^2\diff x + \int_0^t\langle T_{k}(u^N), h^N\diff W(s)\rangle_2 + \int_0^t\sum_{j = 1}^\infty \int_D |h^N(e_j)|^2 \diff s.
\end{split}
\end{equation}
Similarly as under \eqref{eq:ito_energy_u}
\begin{align*}
            &\mathbb{E}\left(\sup_{t\in[0, T]}\left|\int_0^t \langle T_k(u^N) ,\,h^N \diff W(s)\rangle_{L^2(D)}\right|\right)\\
            &\leq C\,\mathbb{E}\left(\sqrt{\int_0^T \|h^N(s)^* T_k(u^N(s))\|_{L^2(D)}^2\diff s}\right)\\
            &\leq C\, \mathbb{E}\left(\sqrt{\int_0^T\|T_k(u^N(s))  \|^2_{L^2(D)}\|h^N(s)\|^2_{HS(L^2(D))}\diff s}\right)\\
            &\leq \frac{1}{4}\mathbb{E}\left(\sup_{t\in [0, T]}\|T_k(u^N(t))  \|^2_{L^2(D)}\right) + C\mathbb{E}\int_0^T\|h^N\|^2_{HS(L^2(D))}\diff t\\
            &\leq \frac{1}{4}\mathbb{E}\left(\sup_{t\in [0, T]}\|u^N(t)\|^2_{L^2(D)}\right) + C\mathbb{E}\int_0^T\|h^N\|^2_{HS(L^2(D))}\diff t,
\end{align*}
which together with \eqref{1} implies
\begin{align*}
    \mathbb{E}\left(\sup_{t\in[0, T]}\left|\int_0^t \langle T_k(u^N) ,\,h^N \diff W(s)\rangle_{L^2(D)}\right|\right) \lesssim \mathbb{E}\int_0^T\|h^N\|^2_{HS(L^2(D))}\diff t.
\end{align*}
Plugging the above into \eqref{2} we get
\begin{equation*}
    \begin{split}
        \mathbb{E}\int_0^t\int_D A(s, x, \nabla_x T_k(u^N))\cdot \nabla_x T_{k}(u^N)\diff x\diff s \leq \frac{1}{2}\mathbb{E}\int_D u_0^2\diff x + C\mathbb{E}\int_0^T\|h^N\|^2_{HS(L^2(D))}\diff t.
    \end{split}
\end{equation*}
Hence, by the coercivity condition \ref{ass:on_A_2} the sequences $\{M(t, x, \nabla_x T_k(u^N)\}_{N\in \N}$\\
and $\{M^*(t, x, A(t, x, \nabla_x T_k(u^N)))\}_{N\in\N}$ are bounded in $L^1(\Omega\times Q_T)$. Since the bound is independent of $k$, by the continuity with respect to the third variable of both $M$ and $A$, we can use Lebesgue's dominated convergence theorem to conclude that $\{\nabla_x u^N\}_{N\in\N}$ and $\{A(t, x, \nabla_x u^N\}_{N\in \N}$ are bounded in $L_M(\Omega\times Q_T)$ and $L_{M^*}(\Omega\times Q_T)$ respectively. At this point we may simply repeat the reasoning given in the previous subsection to conclude that
\begin{align*}
    \nabla_x u^N &\wstar \nabla_x u &&\text{ weakly* in }L_M(\Omega\times Q_T),\\
     A(t, x, \nabla_x u^N) &\wstar \chi &&\text{ weakly* in }L_{M^*}(\Omega\times Q_T),
\end{align*}
and $\chi = A(t, x, \nabla_x u)$ a.e. in $\Omega\times Q_T$. Converging in the weak formulation for $u^N$ gives us our desired result. Moving forward, to show the stated energy equality \eqref{eq:final_energy_equality} we turn to Lemma \ref{lem:trunc_energy_eq}, which gives us the equality
\begin{equation*}
    \begin{split}
        &\mathbb{E}\int_{D}G_{k}(u(t))\diff x - \mathbb{E}\int_{D}G_{k}(u_0)\diff x\\
        &\qquad = -\mathbb{E}\int_0^t\int_D \alpha(s, x)\cdot \nabla_x T_{k}(u)\diff x\diff s + \mathbb{E}\int_0^t\sum_{j = 1}^\infty \int_D T'_{k}(u)\,|h(e_j)|^2 \diff x \diff s.
    \end{split}
\end{equation*}
for almost every $t\in (0, T)$. Here, every term can be easily treated by the Lebesgue's dominated convergence theorem. Indeed,
\begin{align*}
    G_k(u(t)) \nearrow \frac{1}{2}(u(t))^2, \quad T'_k(u) \nearrow 1,\text{ almost everywhere as }k\to +\infty,
\end{align*}
and we get our needed \eqref{eq:final_energy_equality}. Since the left-hand side is well-defined for all $t\in[0,T]$, we may consider a representative of $\Vert u(t)\Vert_{L^2(D)}$ such that \eqref{eq:final_energy_equality} holds for all $t\in [0,T]$. Note that the multiplicative case below is done via the fixed point argument, therefore the energy equality will translate to that case as well. At last, we show that $u\in C([0, T]; L^2(\Omega\times D))$. Fix $t\in [0, T]$. From \eqref{eq:final_energy_equality} for any $t_k\to t$
\begin{multline*}
    \frac{1}{2}\mathbb{E}\|u(t_k)\|_{L^2(D)}^2 - \frac{1}{2}\mathbb{E}\|u(t)\|_{L^2(D)}^2 = -\mathbb{E}\int_t^{t_k}\int_D A(s, x, \nabla_x u)\cdot \nabla_x u\diff x\diff s + \mathbb{E}\int_t^{t_k}\|h\|_{HS(L^2(D))}^2\diff s\to 0,
\end{multline*}
which implies
$$
\|u(t_k)\|_{L^2(\Omega\times D)}\rightarrow \|u(t)\|_{L^2(\Omega\times D)},\quad t_k\to t.
$$
Hence to conclude the continuity we shall show that $u$ is weakly continuous as a function taking values in $L^2(\Omega\times D)$. As weak convergence combined with a convergence of norms implies strong convergence in $L^2$, this will grant us our thesis. Fix a function $\psi\in L^2(\Omega)$ and $\varphi\in C^\infty_c(D)$. Then, test \eqref{eq:weak_form_final} by $\psi\varphi$ to get
\begin{align*}
    &\mathbb{E}\int_D u(t)\psi\varphi\diff x - \mathbb{E}\int_D u_0\psi\varphi\diff x =\\
    &\qquad -\mathbb{E}\int_0^t\int_D A(\tau, x, \nabla_x u)\cdot(\psi\nabla_x\varphi)\diff x\diff\tau + \mathbb{E}\int_D\psi\varphi\,\int_0^t h\diff W(\tau)\diff x.
\end{align*}
Since, the right-hand side is continuous in time, it follows that 
\begin{align}\label{weak_continuity_for_dense_set}
\mathbb{E}\int_D u(t_k)\psi\varphi\diff x\rightarrow \mathbb{E}\int_D u(t)\psi\varphi\diff x,\quad t_k\to t.
\end{align}
We will now show that the finite linear combinations of the functions of the form $\psi\,\varphi$, $\psi\in L^2(\Omega)$, $\varphi\in C_c^\infty(D)$ are dense in $L^2(\Omega\times D)$. Although the argument is quite standard, let us write it for the convenience of the reader. Fix $\phi\in L^2(\Omega\times D)$. Then, there exists a sequence of sets $\{S_k\}_{k\in\N}\subset \Omega\times D$, and a sequence of numbers $\{a_k\}_{k\in\N}$ such that
\begin{align}\label{simple_to_phi}
\sum_{k = 1}^n a_k\mathbf{1}_{S_k} \rightarrow \phi,\text{ strongly in }L^2(\Omega\times D),\text{ as }n\to +\infty.
\end{align}
Moreover, by the definition of the product measure on $\Omega\times D$, for any $k\in\N$ there exists a sequence of families of sets $\{A_k^{j, i}\times B_k^{j, i}\}_{j\in\N, i\in\N}\subset \Omega\times D$ such that for any $k, i\in \N$ we have $S_k\subset \bigcup_{j\in \N}(A_k^{j, i}\times B_k^{j, i})$, and
\begin{align}\label{double_simple_to_simple}
\sum_{j = 1}^\infty\mathbf{1}_{A_k^{j, i}}\mathbf{1}_{B_k^{j, i}} \rightarrow \mathbf{1}_{S_k},\text{ strongly in }L^2(\Omega\times D), \text{ as }i\to +\infty.
\end{align}
Now fix $\varepsilon > 0$. Since $\mathbf{1}_{B_k^{j, i}}\in L^2(D)$, we can find a function $\varphi_k^{j, i}\in C_c^\infty(D)$ such that
\begin{align*}
    \|\varphi_{k}^{j, i} - \mathbf{1}_{B_k^{j, i}}\|_{L^2(D)} \leq \frac{1}{|a_k| \sqrt{|A_k^{j, i}|}}\frac{\varepsilon}{4\cdot 2^k\,2^j}.
\end{align*}
Moreover, for any $k, i\in \N$, we can find $l_{k, i}\in \N$ such that
$$
 \left\|\sum_{j = 1}^{l_{k, i}} a_k\mathbf{1}_{A_k^{j, i}}\varphi_k^{j, i} - \sum_{j = 1}^\infty a_k\mathbf{1}_{A_k^{j, i}}\varphi_k^{j, i}\right\|_{L^2(\Omega\times D)} \leq \frac{\varepsilon}{4\cdot 2^k},
$$
and by \eqref{double_simple_to_simple} for any $k\in \N$, we can find $i_k\in \N$ big enough so that
$$
\left\|\sum_{j = 1}^\infty \mathbf{1}_{A_k^{j, i_k}}\mathbf{1}_{B_k^{j, i_k}} - \mathbf{1}_{S_k}\right\|_{L^2(\Omega\times D)} \leq \frac{1}{|a_k|}\frac{\varepsilon}{4\cdot 2^k}.
$$
At last, by \eqref{simple_to_phi} we can find $n\in \N$ large enough for
$$
\left\|\sum_{k = 1}^n a_k\mathbf{1}_{S_k} - \phi\right\|_{L^2(\Omega\times D)} \leq \frac{\varepsilon}{4}
$$
to hold. Thus, we may deduce
\begin{equation*}
    \begin{split}
        &\left\|\sum_{k = 1}^n\sum_{j = 1}^{l_{k, i_k}} a_k\mathbf{1}_{A_k^{j, i_k}}\varphi_k^{j, i_k} - \phi\right\|_{L^2(\Omega\times D)}\\
        &\leq \left\|\sum_{k = 1}^n\sum_{j = 1}^{l_{k, i_k}} a_k\mathbf{1}_{A_k^{j, i_k}}\varphi_k^{j, i_k} - \sum_{k = 1}^n\sum_{j = 1}^\infty a_k\mathbf{1}_{A_k^{j, i_k}}\varphi_k^{j, i_k}\right\|_{L^2(\Omega\times D)}\\
        &\qquad + \left\|\sum_{k = 1}^n\sum_{j = 1}^\infty a_k\mathbf{1}_{A_k^{j, i_k}}\varphi_k^{j, i_k} - \sum_{k = 1}^n \sum_{j = 1}^\infty a_k\mathbf{1}_{A_k^{j, i_k}}\mathbf{1}_{B_k^{j, i_k}}\right\|_{L^2(\Omega\times D)}\\
        &\qquad + \left\|\sum_{k = 1}^n\sum_{j = 1}^\infty a_k\mathbf{1}_{A_k^{j, i_k}}\mathbf{1}_{B_k^{j, i_k}} - \sum_{k = 1}^n a_k\mathbf{1}_{S_k} \right\|_{L^2(\Omega\times D)}\\
        &\qquad + \left\|\sum_{k = 1}^n a_k\mathbf{1}_{S_k} - \phi\right\|_{L^2(\Omega\times D)}\\
        &\leq \sum_{k = 1}^n\left\|\sum_{j = 1}^{l_{k, i_k}} a_k\mathbf{1}_{A_k^{j, i_k}}\varphi_k^{j, i_k} - \sum_{j = 1}^\infty a_k\mathbf{1}_{A_k^{j, i_k}}\varphi_k^{j, i_k}\right\|_{L^2(\Omega\times D)}\\
        &\qquad + \sum_{k = 1}^n\sum_{j = 1}^\infty\left\|a_k\mathbf{1}_{A_k^{j, i_k}}(\varphi_k^{j, i_k} - \mathbf{1}_{B_k^{j, i_k}})\right\|_{L^2(\Omega\times D)}\\
        &\qquad + \sum_{k = 1}^n\left\| a_k\left(\sum_{j = 1}^\infty\mathbf{1}_{A_k^{j, i_k}}\mathbf{1}_{B_k^{j, i_k}} - \mathbf{1}_{S_k}\right) \right\|_{L^2(\Omega\times D)}\\
        &\qquad + \left\|\sum_{k = 1}^n a_k\mathbf{1}_{S_k} - \phi\right\|_{L^2(\Omega\times D)}\\
        &\leq \sum_{k = 1}^n\frac{\varepsilon}{4\cdot 2^k} + \sum_{k = 1}^n\sum_{j = 1}^\infty |a_k|\sqrt{|A_k^{j, i_k}|}\frac{1}{|a_k|\sqrt{|A_k^{j, i_k}|}}\frac{\varepsilon}{4\cdot 2^k\, 2^j} + \sum_{k = 1}^n |a_k|\frac{1}{|a_k|}\frac{\varepsilon}{4\cdot 2^k} + \frac{\varepsilon}{4}\\
        &< \varepsilon
    \end{split}
\end{equation*}
For simplicity let
$$
\phi_n := \sum_{k = 1}^n\sum_{j = 1}^{l_{k, i_k}} a_k\mathbf{1}_{A_k^{j, i_k}}\varphi_k^{j, i_k}.
$$
Using \eqref{weak_continuity_for_dense_set} we see that for any $n\in \N$ it is the case that
\begin{align*}
\mathbb{E}\int_D u(t_k)\phi_n\diff x\rightarrow \mathbb{E}\int_D u(t)\phi_n\diff x,\quad t_k\to t.
\end{align*}
Hence, using H\"{o}lder's inequality and the energy equality \eqref{eq:final_energy_equality}
\begin{equation*}
    \begin{split}
        &\limsup_{t_k\to t}\left|\mathbb{E}\int_D u(t_k)\phi\diff x - \mathbb{E}\int_D u(t)\phi\diff x\right|\\
        &\leq \limsup_{t_k\to t}\left|\mathbb{E}\int_D u(t_k)\phi\diff x - \mathbb{E}\int_D u(t_k)\phi_n\diff x\right|\\
        &\qquad + \limsup_{t_k\to t}\left|\mathbb{E}\int_D u(t_k)\phi_n\diff x - \mathbb{E}\int_D u(t)\phi_n\diff x\right|\\
        &\qquad + \limsup_{t_k\to t}\left|\mathbb{E}\int_D u(t)\phi_n\diff x - \mathbb{E}\int_D u(t)\phi\diff x\right|\\
        &\leq 2\sup_{t\in [0, T]}\|u(t)\|_{L^2(\Omega\times D)}\|\phi_n - \phi\|_{L^2(\Omega\times D)}.
    \end{split}
\end{equation*}
Converging with $n\to +\infty$ gives us our thesis.

\section{Multiplicative case}\label{sec:multiplicative}

We move to extending our result to the multiplicative case, that is when $h$ can depend on the solution $u$ as given in the Introduction. To this end we shall employ the classical Banach fixed point theorem. Define the operator
$$
\Psi: L^2(0, T; L^2(\Omega\times D), \diff e^{-\alpha t}) \rightarrow L^2(0, T; L^2(\Omega\times D), \diff e^{-\alpha t}),
$$
where $e^{-\alpha t}$ is a weight in time for some soon-to-be-fixed $\alpha > 0$, and $\Psi(S) = u_S$ maps a given $S$ to a solution $u_s$ of \eqref{eq:main_sys} with $h(\cdot, S)$. Note that as the time interval is fixed, by our previous considerations $\Psi$ is well-defined. Similarly to \eqref{1} we can obtain
\begin{align*}
    \mathbb{E}(\|(u^{N}_{S_1} - u^{N}_{S_2})(t)\|^2_{L^2(D)}) \lesssim \int_0^t\|h^{N}(S_1) - h^{N}(S_2)\|^2_{}\diff \tau.
\end{align*}
Thus, after converging with $N\to +\infty$, due to \eqref{conv:strong_conv_in_N_u_N}
\begin{align*}
    \mathbb{E}(\|(u_{S_1} - u_{S_2})(t)\|^2_{L^2(D)}) \leq C\int_0^t\|h(S_1) - h(S_2)\|^2_{HS(L^2(D))}\diff \tau.
\end{align*}
Due to \ref{ass:on_h_2} in Assumption \ref{ass:func_h_multiplicative} we can estimate the right-hand side
$$
\int_0^t\|h(S_1) - h(S_2)\|^2_{HS(L^2(D))}\diff \tau \lesssim \|S_1 - S_2\|_{L^2(\Omega\times Q_t)}.
$$
Hence,
\begin{align}\label{3}
    \mathbb{E}(\|(u_{S_1} - u_{S_2})(t)\|^2_{L^2(D)}) \leq C\|S_1 - S_2\|_{L^2(\Omega\times Q_t)}^2.
\end{align}
By multiplying \eqref{3} by $e^{-\alpha t}$, and integrating we get
\begin{align*}
    \int_0^T\mathbb{E}(\|(u_{S_1} - u_{S_2})(t)\|^2_{L^2(D)})e^{-\alpha t}\diff t \leq C\int_0^T e^{-\alpha t}\int_0^t\mathbb{E}\|S_1 - S_2\|^2_{L^2(D)}\diff\tau\diff t.
\end{align*}
Integrating by parts
\begin{align*}
    C\int_0^T e^{-\alpha t}\int_0^t\mathbb{E}\|S_1 - S_2\|^2_{L^2(D)}\diff\tau\diff t = \frac{C}{\alpha}(1 - e^{-\alpha T})\int_0^T\mathbb{E}\|S_1 - S_2\|^2_{L^2(D)} e^{-\alpha t}\diff t.
\end{align*}
Thus,
\begin{align*}
    \int_0^T\mathbb{E}(\|(u_{S_1} - u_{S_2})(t)\|^2_{L^2(D)})e^{-\alpha t}\diff t \leq \frac{C}{\alpha}(1 - e^{-\alpha T})\int_0^T\mathbb{E}\|S_1 - S_2\|^2_{L^2(D)} e^{-\alpha t}\diff t.
\end{align*}
Taking $\alpha > 0$ big enough, so that $C < \alpha$, we may use Banach fixed point theorem to conclude.

\appendix

\section{It\^{o}'s formula in Orlicz spaces}

In the following section we show how to obtain It\^{o}'s formula in the setting of Orlicz spaces. We will closely follow the book \cite{Liu2015Stochastic} and the proof given there. Let us explain some of the minor differences that appear and how we combat them. First, as mentioned in Section \ref{sec:musielaki}, there is a problem of duality when converging in the product with the approximations. It is quite hard to obtain a pair of sequences, one of which converges strongly, and the other one weakly, to conclude to convergence of their product. Instead, in the Orlicz setting one focuses on the modular convergence, which by Proposition \ref{prop:conv_of_product_in_musielaki} has the needed property. This can be seen in Lemma \ref{lem:approx_by_floor_func}, where we show modular convergence of the floor functions, instead of their strong convergence as in \cite{Liu2015Stochastic}. Second, there is no Gelfand triple structure
$$
V\hookrightarrow H \hookrightarrow V^*,
$$
in our case, and no time-space factorization of our function spaces (that is we cannot write $L_m(Q_T)$ as a Bochner space in time, taking values in $V$). Here, there is no special remedy, but instead we take advantage of the fact, that we work with particular spaces, instead of general Banach space $V$, and hence we do not really need the Gelfand triple structure to understand the product $\DIV_x\alpha \, u$ in the equation \eqref{eq:weak_form_in_ito_lemma_proof}.

\begin{lem}\label{lem:approx_by_floor_func}
    Suppose that $u\in L^2(\Omega\times Q_T)\cap L^1(\Omega\times (0, T); W^{1,1}_0(D))\cap L_m(\Omega\times Q_T)$ is such that $\nabla_x u\in L_m(\Omega\times Q_T)$. Then, there exists a sequence of partitions $I_l := \{0 = t_0^l < t_1^l < ... < t_{k_l}^l = T\}$ such that $I_l\subset I_{l+1}$ and $\delta(I_l) := \max_i(t_i^l - t^l_{i-1})\to 0$ as $l\to +\infty$, and for 
    $$
    \tilde{u}^l := \sum_{i=2}^{k_l}\mathbf{1}_{[t_{i-1}^l, t_i^l)}u(t_{i-1}^l), \quad \hat{u}^l := \sum_{i=1}^{k_l-1}\mathbf{1}_{[t_{i-1}^l, t_i^l)}u(t_i^l),\quad l\in\N
    $$
    it holds that
    $$
    \nabla_x\tilde{u}^l \xrightarrow{m} \nabla_xu,\quad \nabla_x\hat{u}^l\xrightarrow{m} \nabla_x u \quad \text{ modularly in }L_m(\Omega\times Q_T),
    $$
    $$
    \tilde{u}^l \xrightarrow{m} u,\quad \hat{u}^l\xrightarrow{m}  u\quad \text{ modularly in }L_m(\Omega\times Q_T)
    $$
    as $l\to+\infty$ and
   $$
    \|\tilde{u}^l - u\|_{L^2(\Omega \times   Q_T)} + \|\hat{u}^l - u\|_{L^2(\Omega \times   Q_T)} \rightarrow 0
    \quad \mbox{ as }l\to+\infty .$$

\end{lem}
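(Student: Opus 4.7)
My plan is a two-step approximation combined with a diagonal argument: first regularize $u$ in the time variable by convolution with a smooth mollifier so that point-evaluation becomes meaningful; then build step-functions from the regularized object; finally pair the mollification parameter and the partition mesh via a diagonal selection. Throughout, the partition points will be taken from the full-measure subset of $(0,T)$ consisting of common Lebesgue points of the Bochner map $t\mapsto u(t,\cdot)\in L^2(\Omega\times D)$ and of the scalar maps $t\mapsto\int_{\Omega\times D}m(|\nabla_x u(t,\cdot)|/\lambda)$ for a countable dense set of $\lambda>0$, so that $u(t_{i-1}^l)$ and $\nabla_x u(t_{i-1}^l)$ are unambiguously defined and controlled by their local time-averages.

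Extend $u$ by zero in $t$ outside $[0,T]$ and set $u^\varepsilon := u *_t \rho_\varepsilon$ for a standard smooth mollifier $\rho_\varepsilon$ supported in $[-\varepsilon,\varepsilon]$. Then $u^\varepsilon$ is smooth in $t$, $\nabla_x u^\varepsilon=(\nabla_x u)^\varepsilon$, and by Jensen's inequality applied to the convex Young function $m$,
\[
m\bigl(|(\nabla_x u)^\varepsilon(t)|\bigr)\le \bigl(m(|\nabla_x u|)\bigr)^\varepsilon(t).
\]
The right-hand side converges to $m(|\nabla_x u|)$ in $L^1(\Omega\times Q_T)$, so the family $\{m(|\nabla_x u^\varepsilon|)\}_\varepsilon$ is uniformly equiintegrable; together with a.e.\ convergence at Lebesgue points (hence convergence in measure), Vitali's theorem (Proposition~\ref{prop:vitali_in_musielaki}) yields $\nabla_x u^\varepsilon \xrightarrow{m}\nabla_x u$ modularly in $L_m(\Omega\times Q_T)$. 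The same argument gives $u^\varepsilon \xrightarrow{m} u$ modularly in $L_m(\Omega\times Q_T)$, while $u^\varepsilon\to u$ in $L^2(\Omega\times Q_T)$ is standard.

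For each fixed $\varepsilon$, both $u^\varepsilon$ and $\nabla_x u^\varepsilon$ are (uniformly) continuous in $t$ with values in the relevant $L^2$ and Orlicz spaces over $\Omega\times D$. Hence for any partition $I$ with $\delta(I)\to 0$, the step-functions built from point-values of $u^\varepsilon$ and $\nabla_x u^\varepsilon$ at the partition points converge uniformly in $t$, yielding strong $L^2$ convergence on $\Omega\times Q_T$ and, by dominated convergence using the uniform-in-$t$ bound coming from the smoothness of $u^\varepsilon$, modular convergence in $L_m$. A diagonal choice of $\varepsilon_l\to 0$ and of refining partitions $I_l\subset I_{l+1}$ with $\delta(I_l)\to 0$, all drawn from the Lebesgue-point set above, then produces $\tilde u^l$ and $\hat u^l$ as in the statement; the triangle inequality combined with the two previous steps delivers the four claimed convergences.

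The main obstacle is controlling the modular behaviour of the step-function gradients $\nabla_x\tilde u^l$: a priori these concentrate $\nabla_x u$ along a measure-zero time set, so one cannot hope to bound $\int m(|\nabla_x\tilde u^l|/\lambda)$ directly by $\int m(|\nabla_x u|/\lambda)$ without losing equiintegrability. The Lebesgue-point selection of partition points is precisely what guarantees that point-values of the integrand $m(|\nabla_x u|/\lambda)$ are controlled by local time-averages, and the Jensen-plus-mollifier estimate is what turns this control into a uniform equiintegrability bound on the diagonal sequence, which is what Vitali's theorem requires at the very last step.
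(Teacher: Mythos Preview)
Your two-step mollifier-plus-diagonal scheme has a circular dependency that prevents it from closing. The diagonal argument naturally produces step functions $\tilde v^{\varepsilon_l,l}$ built from the \emph{mollified} values $u^{\varepsilon_l}(t_{i-1}^l)$, whereas the statement requires step functions $\tilde u^l$ built from $u(t_{i-1}^l)$ itself. To pass from one to the other via the triangle inequality you need all three of: (i) $u^{\varepsilon_l}\to u$, which forces $\varepsilon_l\to 0$; (ii) $\tilde v^{\varepsilon_l,l}\to u^{\varepsilon_l}$, which for a given $\varepsilon_l$ requires the partition $I_l$ to be fine relative to the modulus of continuity of $t\mapsto u^{\varepsilon_l}(t)$ --- and that modulus degenerates as $\varepsilon_l\to 0$; (iii) $\tilde u^l-\tilde v^{\varepsilon_l,l}\to 0$, which requires $u^{\varepsilon_l}(t_{i-1}^l)\approx u(t_{i-1}^l)$ at each partition node, hence $\varepsilon_l$ small in a way that depends on the already chosen $I_l$. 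Requirements (ii) and (iii) pull against each other: (ii) wants $\varepsilon_l$ fixed first and $I_l$ refined afterwards, while (iii) wants $I_l$ fixed first and $\varepsilon_l$ driven to zero afterwards. Choosing partition points as Lebesgue points does not break the circle, because the rate at which $u^\varepsilon(t_0)\to u(t_0)$ at a Lebesgue point $t_0$ is not uniform in $t_0$; and your claimed equiintegrability of $\{m(|\nabla_x\tilde u^l|/\lambda)\}_l$ is not delivered by controlling only the scalar map $t\mapsto\int_{\Omega\times D}m(|\nabla_x u(t)|/\lambda)$, since equiintegrability must be checked on arbitrary small sets in $\Omega\times Q_T$, not just on time-slices.

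The paper (following Liu--R\"ockner) sidesteps this entirely: rather than mollifying, it first establishes continuity of time-translation in the modular sense for a.e.\ $\omega$, then considers dyadic partitions \emph{shifted by a parameter} $s\in[0,1]$ via $t_i^n(s)=(s-2^{-n}\lfloor 2^n s\rfloor)+(i-1)2^{-n}$, and shows that the modular distance between the corresponding step function and $u$ tends to zero when \emph{averaged} over $s$. Fubini then yields a good $s$ along which the convergence holds, and this $s$ fixes the partition points. The averaging is precisely what converts point-evaluation at the nodes into a genuine integral over time, which the translation-continuity estimate controls directly --- no diagonal selection, no non-uniform Lebesgue-point rates, and the nesting $I_l\subset I_{l+1}$ comes for free from the dyadic structure.
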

\begin{proof}
    The proof of this Lemma is extremely similar to \cite[Lemma 4.2.6]{Liu2015Stochastic}, therefore we shall only discuss the differences for the setting in Orlicz spaces. For simplicity let us assume that $T = 1$. Extend $\nabla_x u$ to $\Omega\times\mathbb{R}\times D$ by $0$. There exists an $\Omega'\in\mathcal{F}$ with full probability such that for every $\omega\in \Omega'$, we can find by \cite[Theorem 3.7.8]{chlebicka2021book} a sequence of $\{f_n\}_{n\in\N}\subset C_c^\infty(\mathbb{R}\times D)$ such that
    $$
        \int_{\mathbb{R}}\int_D m\left(\frac{\nabla_xf_n - \nabla_x u}{\lambda}\right)\diff x\diff t \leq \frac{1}{2n}
        \quad \mbox{ for some }\lambda > 0.
    $$
     Thus, for every $n\in\N$
    \begin{align*}
        &\limsup_{\delta\to 0}\int_{\mathbb{R}}\int_D m\left(\frac{\nabla_x u(\omega,\delta + s, x) - \nabla_x u(\omega, s, x)}{1 + 2\lambda}\right)\diff x\diff s \\
        &\leq\frac{\lambda}{1 + 2\lambda}\limsup_{\delta\to 0}\Bigg(\int_{\mathbb{R}}\int_D m\left(\frac{\nabla_x u(\omega, \delta+s, x) - \nabla_x f_n(\delta + s, x)}{\lambda}\right)\diff x\diff s\\
        &\qquad\quad+ \int_{\R}\int_{D} m\left(\frac{\nabla_xf_n(s, x) - \nabla_x u(\omega, s, x)}{\lambda}\right)\diff x\diff s\Bigg)\\
        &\leq \frac{\lambda}{(1 + 2\lambda)n},
    \end{align*}
    where we have used that by the uniform continuity of $f_n$ and the continuity of $m$ in $0$
    $$
    \lim_{\delta\to 0}\int_{\R}\int_D m\left(\nabla_xf_n(\delta + s, x) - \nabla_x f_n(s, x)\right)\diff x\diff s = 0.
    $$
    Letting $n\to +\infty$ we obtain
    \begin{align}\label{shift_cont_in_Orlicz}
    \limsup_{\delta\to 0}\int_{\mathbb{R}}\int_D m\left(\frac{\nabla_x u(\omega,\delta + s, x) - \nabla_x u(\omega, s, x)}{1 + 2\lambda}\right)\diff x\diff s = 0, \quad \omega\in \Omega'.
    \end{align}
    Now let $\gamma_n(t) = 2^{-n}\lfloor2^n t\rfloor$. Shifting the integral in \eqref{shift_cont_in_Orlicz} by $t$ and setting $\delta := \gamma_n(t) - t$ we can deduce
    $$
    \limsup_{n\to 0}\int_{\mathbb{R}}\int_D m\left(\frac{\nabla_x u(\omega,\gamma_n(t) + s, x) - \nabla_x u(\omega, t + s, x)}{1 + 2\lambda}\right)\diff x\diff s = 0, \quad \omega\in\Omega'.
    $$
    Since our extension of $\nabla_x u$ is by zero, we can estimate
    \begin{align*}
        &\int_0^1\int_D m\left(\frac{\nabla_x u(\omega,\gamma_n(t) + s, x) - \nabla_x u(\omega, t + s, x)}{1 + 2\lambda}\right)\diff x\diff s\\
        &\phantom{=} \leq \mathbf{1}_{[-2,2]}(t)\frac{1}{2}\left(\int_{\R}\int_D m\left(\frac{2\nabla_x u(\omega,\gamma_n(t) + s, x)}{1 + 2\lambda}\right)\diff x\diff s + \int_\R\int_D m\left(\frac{2\nabla_x u(\omega, t + s, x)}{1 + 2\lambda}\right)\diff x\diff s\right)\\
        &\phantom{=}=\mathbf{1}_{[-2,2]}(t)\int_{\R}\int_D m\left(\frac{2\nabla_x u(\omega, s, x)}{1 + 2\lambda}\right)\diff x\diff s,\quad \omega\in\Omega'.
    \end{align*}
    Hence, by Lebesgue's dominated convergence theorem, we get
    \begin{align*}
        0 &= \lim_{n\to +\infty}\mathbb{E}\int_{\mathbb{R}}\int_0^1\int_D m\left(\frac{\nabla_x u(\omega,\gamma_n(t) + s, x) - \nabla_x u(\omega, t + s, x)}{1 + 2\lambda}\right)\diff x\diff s\diff t\\
        &\geq  \lim_{n\to +\infty}\mathbb{E}\int_0^1\int_0^1\int_D m\left(\frac{\nabla_x u(\omega,\gamma_n(t-s) + s, x) - \nabla_x u(\omega, t, x)}{1 + 2\lambda}\right)\diff x\diff t\diff s.
    \end{align*}
    As the rest of the proof follows exactly as in \cite[Lemma 4.2.6]{Liu2015Stochastic} by fixing
    $$
    t_0^n(s) :=0,\, t_i^n(s) := \left(s - \frac{\lfloor2^n s\rfloor}{2^n}\right) + \frac{i-1}{2^n},\,1\leq i\leq 2^n,\, t_{2^n+1}^n := 1,
    $$
    we skip it. Similarly, we show the rest of the convergences.
\end{proof}

\begin{thm}\label{thm:orlicz_ito_formula}
    Let $u_0\in L^2((\Omega, \mathcal{F}_0, P)\times D)$, and $\alpha\in L_{m^*}(\Omega\times Q_T)$, $h\in \mathcal{N}_W^2(0, T; L^2(D))$.
    %Since $\alpha\in L^1(\Omega\times (0,T);L^1(D))$, progressively measurable as a mapping from $\Omega\times (0,T)$ with values in $L^1(D)$
    Define
    \begin{align}\label{eq:weak_form_in_ito_lemma_proof}
    u(t) := u_0 + \int_0^t\DIV_x\alpha\diff s + \int_0^t h(s)\diff W(s),\quad \text{ in }\mathcal{D}'(D),\quad t\in [0, T]
    \end{align}
    If $u\in L^2(\Omega\times Q_T)\cap L^1(\Omega\times (0,T); W^{1,1}_0(D))$ with $\nabla_x u\in L_m(\Omega\times Q_T)$, then $u\in L^2(\Omega;C([0, T]; L^2(D)))$ is $\mathcal{F}_t$ adapted with values in $L^2(D)$ and the following It\^{o} formula holds
    \begin{equation}\label{eq:ito_formula_orlicz}
    \begin{split}
        \|u(t)\|^2_{L^2(D)} &= \|u_0\|_{L^2(D)} + 2\int_0^t\int_D\alpha(s,x)\cdot\nabla_x u(s,x)\diff x\diff s\\
        &\phantom{=}+ \int_0^t\|h\|^2_{HS(L^2(D))}\diff s + 2\int_0^t\langle u(s), h(s)\diff W(s)\rangle_2,\quad\text{ for all }t\in [0, T], \text{ a.s. in }\Omega. 
    \end{split}
    \end{equation}
\end{thm}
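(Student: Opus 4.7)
My plan is to follow the piecewise-constant-in-time strategy of \cite[Theorem 4.2.5]{Liu2015Stochastic}, substituting modular convergence in $L_m(\Omega\times Q_T)$ for the strong Bochner convergence available in the classical Gelfand-triple setting; this is precisely why Lemma \ref{lem:approx_by_floor_func} has been formulated in terms of modular rather than norm convergence. Let $\{I_l\}_{l\in\N}$ with $I_l=\{0=t_0^l<\cdots<t_{k_l}^l=T\}$ be the nested partitions supplied by that lemma and $\tilde u^l,\hat u^l$ the associated left/right step-function approximations of $u$, so that
\begin{equation*}
\nabla_x\tilde u^l,\ \nabla_x\hat u^l\xrightarrow{m}\nabla_x u \text{ modularly in } L_m(\Omega\times Q_T), \qquad \tilde u^l,\hat u^l\to u \text{ in } L^2(\Omega\times Q_T).
\end{equation*}

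For fixed $t=t_j^l\in I_l$ the polarisation identity $\|b\|_{L^2(D)}^2-\|a\|_{L^2(D)}^2=\langle b-a,a+b\rangle_{L^2(D)}$ together with \eqref{eq:weak_form_in_ito_lemma_proof} yields
\begin{equation*}
\|u(t_j^l)\|_{L^2(D)}^2-\|u_0\|_{L^2(D)}^2=\sum_{i=1}^{j}\Bigl\langle\int_{t_{i-1}^l}^{t_i^l}\!\DIV_x\alpha\,\diff s+\int_{t_{i-1}^l}^{t_i^l}\!h\,\diff W,\ u(t_i^l)+u(t_{i-1}^l)\Bigr\rangle_{2}.
\end{equation*}
The pairing of the divergence increment with $u(t_i^l)+u(t_{i-1}^l)\in W_0^{1,1}(D)$ is interpreted through integration by parts in $D$ (legitimate because $\alpha\in L_{m^*}$ and the gradient factor lies in $L_m$ a.s.), so the deterministic half of the sum equals $-\int_0^{t_j^l}\!\int_D\alpha\cdot(\nabla_x\tilde u^l+\nabla_x\hat u^l)\diff x\diff s$, which by Proposition \ref{prop:conv_of_product_in_musielaki} converges in $L^1(\Omega)$ to $-2\int_0^t\!\int_D\alpha\cdot\nabla_x u\diff x\diff s$. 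Writing $u(t_i^l)+u(t_{i-1}^l)=2u(t_{i-1}^l)+(u(t_i^l)-u(t_{i-1}^l))$ splits the stochastic half into the Itô martingale $2\int_0^{t_j^l}\langle\tilde u^l,h\,\diff W\rangle_2\to 2\int_0^t\langle u,h\,\diff W\rangle_2$ (by Itô's isometry and the $L^2(\Omega\times Q_T)$-convergence of $\tilde u^l$), the quadratic-variation contribution $\sum_i\bigl\|\int_{t_{i-1}^l}^{t_i^l}h\,\diff W\bigr\|_{L^2(D)}^2\to\int_0^t\|h\|_{HS(L^2(D))}^2\diff s$ (standard Riemann-sum/Itô-isometry argument), and a mixed term $\sum_i\bigl\langle\int h\,\diff W,\int\DIV_x\alpha\,\diff s\bigr\rangle_2$ which vanishes in $L^1(\Omega)$ as $\delta(I_l)\to 0$. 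Assembling the four limits produces \eqref{eq:ito_formula_orlicz} at every $t\in\bigcup_l I_l$, a dense subset of $[0,T]$; since the right-hand side is continuous in $t$ (in probability, and pathwise after a classical modification), the identity extends to all $t\in[0,T]$ and simultaneously delivers $u\in L^2(\Omega;C([0,T];L^2(D)))$.

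The principal difficulty is the absence of a Gelfand triple together with the fact that in general $L_{m^*}$ is not the dual of $L_m$: the deterministic limit $\int\alpha\cdot\nabla_x\tilde u^l\to\int\alpha\cdot\nabla_x u$ cannot be closed by a weak/strong duality combination, because $\alpha$ is only available in $L_{m^*}$ and $\nabla_x\tilde u^l$ admits no strong control in $L_m$. The remedy is exactly the modular-convergence product rule of Proposition \ref{prop:conv_of_product_in_musielaki}, which transforms the modular convergences of Lemma \ref{lem:approx_by_floor_func} into strong $L^1$-convergence of the product; a secondary technical point is that the partition-endpoint values $u(t_i^l)$ must be interpreted unambiguously via the pointwise-in-$t$ definition in \eqref{eq:weak_form_in_ito_lemma_proof}, which is the reason the partition times have to be chosen within the full-measure set of ``good'' times built into the construction of Lemma \ref{lem:approx_by_floor_func}.
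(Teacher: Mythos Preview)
Your overall strategy is correct and matches the paper's: both follow the piecewise-constant scheme of \cite[Theorem~4.2.5]{Liu2015Stochastic}, replacing strong Bochner convergence by the modular convergence of Lemma~\ref{lem:approx_by_floor_func} together with Proposition~\ref{prop:conv_of_product_in_musielaki}. The deterministic, martingale and quadratic-variation pieces are handled exactly as you indicate.

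The genuine gap is the mixed term. You assert that
\[
\sum_i\Bigl\langle\int_{t_{i-1}^l}^{t_i^l}h\,\diff W,\ \int_{t_{i-1}^l}^{t_i^l}\DIV_x\alpha\,\diff s\Bigr\rangle_{L^2(D)}\to 0
\]
without argument, but this is the crux of the proof. Writing $M_i:=\int_{t_{i-1}^l}^{t_i^l}h\,\diff W$ and $A_i:=\int_{t_{i-1}^l}^{t_i^l}\DIV_x\alpha\,\diff s$, note that $h\in\mathcal{N}^2_W(0,T;L^2(D))$ only, so $M_i$ has no weak gradient and you \emph{cannot} integrate by parts to rewrite $\langle M_i,A_i\rangle$ as $-\int\!\!\int\alpha\cdot\nabla_x M_i$; nor is $A_i$ $\mathcal{F}_{t_{i-1}^l}$-measurable, so the sum is not a martingale increment. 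In fact, since $\langle A_i,u(t_i^l)+u(t_{i-1}^l)\rangle+\langle M_i,A_i\rangle=2\langle A_i,u(t_i^l)\rangle-\|A_i\|_{L^2(D)}^2$, your mixed term differs from $-\sum_i\|A_i\|_{L^2(D)}^2$ only by quantities that \emph{do} converge via Lemma~\ref{lem:approx_by_floor_func}. Showing $\sum_i\|A_i\|^2\to 0$ is exactly the paper's Claim~(d), and it is non-trivial: the paper projects $M_i$ onto $P_n(L^2(D))\subset W^{s,2}_0(D)$ (where gradients exist, so the pairing with $\DIV_x\alpha$ can be unfolded), passes $l\to\infty$ using the modular convergences of both $\hat u^l$ and $\tilde u^l$, and bounds the remaining piece by $\sqrt{\delta_0}\bigl(\int_0^t\|(1-P_n)h\|_{HS}^2\bigr)^{1/2}$, forcing $\delta_0=0$ as $n\to\infty$. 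Without this projection step your argument does not close.

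Two secondary points are also elided. First, the bound $\mathbb{E}\sup_{t\in[0,T]}\|u(t)\|_{L^2(D)}^2<\infty$ is needed both to make $\int_0^t\langle u,h\,\diff W\rangle$ well defined and for the conclusion $u\in L^2(\Omega;C([0,T];L^2(D)))$; this is the paper's Claim~(b) and uses Burkholder--Davis--Gundy together with the telescoped identity. Second, extending \eqref{eq:ito_formula_orlicz} from the dense set $\bigcup_l I_l$ to all $t$ is not a ``classical modification'': a priori $t\mapsto\|u(t)\|_{L^2(D)}$ is only lower semicontinuous, and one must show that $\{u(t(l))\}_l$ is Cauchy in $L^2(D)$ along partition points $t(l)\nearrow t$ (the paper's Claim~(e)).
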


\begin{proof}
    Since the proof of this theorem is extremely similar to the proof of the \cite[Theorem 4.2.5]{Liu2015Stochastic}, we will only discuss the differences in the setting of Orlicz spaces. By Lemma \ref{lem:approx_by_floor_func} we can fix $\hat{u}^l$ and $\tilde{u}^l$ approximating $u$ modularly. Here, we also introduce the notation
    $$
    \mathbb{M}(t) := \int_0^t h(s)\diff W(s),
    $$
    which we will use below.
    \begin{enumerate}[label = Claim (\alph*):]
        \item \label{eq:basic_eq_in_ito_lemma_proof} \begin{align*}
            \int_D u^2(t)\diff x &= \int_D u^2(s)\diff x + 2\int_s^t\int_D \alpha (r, x)\cdot\nabla_x u (t,x)\diff x\diff r\\
            &\phantom{=}+ 2\int_D u (s,x)\,(\mathbb{M}(t) - \mathbb{M}(s))\diff x\\
            &\phantom{=}+ \|\mathbb{M}(t) - \mathbb{M}(s)\|^2_{L^2(D)} - \|u(t) - u(s) - (\mathbb{M}(t) - \mathbb{M}(s))\|_{L^2(D)}^2,
        \end{align*}
        holds for almost every $t, s\in [0, T]$, $t > s$.\\
        Our first goal is to show that \eqref{eq:weak_form_in_ito_lemma_proof} can be tested by the solution. To do so, we closely follow \cite[Theorem 3.7.8, Lemma 3.7.9]{chlebicka2021book}. Let 
        $$
            D_j = D\cap G_j,
        $$
        be a split of the Lipschitz domain $D$ into a finite number of star-shaped sets with respect to the balls $B_{R_j}$ (that is $D_j$ is star-shaped with respect to every point in $B_{R_j}$). Define the partition of unity
        $$
            0\leq\theta_j\leq 1,\quad \theta_j\in C_c^\infty(G_j),\quad\sum_{j\in J}\theta_j = 1\text{, for }x\in D.
        $$
        Moreover, for any measurable function $\xi: \R^d \rightarrow\R^d$ with $\supp\,\xi\subset D_j$ we set
        $$
            \xi_{j,\delta}(x) = \int_{D_j}\rho_\delta(x-y)\xi\left(\frac{y}{\kappa^j_\delta}\right)\diff y,
        $$
        where $\rho_\delta$ is the typical mollification kernel, and $\kappa^j_\delta := 1 - \frac{2\delta}{R_j}$. At last, note that since $\nabla_x u\in L_m(\Omega\times Q_T)$ it follows that for almost every $t\in [0, T]$ and $\omega\in \Omega$, $\nabla_xu(t)\in L_m(D)$. Then, applying directly Theorem 3.7.8, Lemma 3.7.9 in \cite{chlebicka2021book} we can say
        \begin{align}
            (u(t))_\delta := \sum_{j\in J}(\theta_j\, u(t))_{j, \delta} &\rightarrow u(t) &&\text{strongly in }L^1(D),\\
            \nabla_x(u(t))_\delta := \sum_{j\in J}\nabla_x(\theta_j\, u(t))_{j, \delta} &\xrightarrow{m} \nabla_x u(t) &&\text{modularly in }L_m(D).\label{conv:modular_conv_ito_lemma_proof_delta}
        \end{align}
        In our case it is possible to upgrade the strong convergence in $L^1$ to strong convergence in $L^2$. Indeed, note that since for any $j\in J$, $|\theta_j u(t)|^2\in L^1(D)$, it is equiintegrable and by the de la Vall\'{e}e--Poussin criterion, there exists a superlinear function $\Phi$ for which
        $$
            \int_{D}\Phi(|\theta_j u(t)|^2)\diff x < +\infty.
        $$
        By a simple change of variables, the above implies
        \begin{align*}
            &\sup_{\delta > 0}\int_D\Phi\left(\theta_j\left(\frac{x}{\kappa^j_\delta}\right) u\left(t,\frac{x}{\kappa^j_\delta}\right)\right)\diff x\\
            &\leq \sup_{\delta > 0}\int_D\frac{1}{(\kappa^j_\delta)^d}\Phi\left(\theta_j\left(\frac{x}{\kappa^j_\delta}\right) u\left(t,\frac{x}{\kappa^j_\delta}\right)\right)\diff x \leq \int_{D}\Phi(|\theta_j u(t)|^2)\diff x < +\infty.
        \end{align*}
        Hence, again by the de la Vall\'{e}e--Poussin criterion, and Vitali's convergence theorem (remember that $\kappa^j_\delta\to 1$, as $\delta\to 0$)
        $$
            \theta_j\left(\frac{x}{\kappa^j_\delta}\right) u\left(t,\frac{x}{\kappa^j_\delta}\right) \rightarrow \theta_j u(t)\text{ strongly in }L^2(D),\text{ as }\delta\to 0,
        $$
        which readily implies
        \begin{align}
            \varphi_\delta := \sum_{j\in J}\theta_j\left(\frac{x}{\kappa^j_\delta}\right) u\left(t,\frac{x}{\kappa^j_\delta}\right) \rightarrow u(t)\text{ strongly in }L^2(D),\text{ as }\delta\to 0.\label{conv:some_conv_1}
        \end{align}
        The strong convergence in $L^2(D)$ implies by Kolmogorov--Riesz--Fr\'{e}chet's theorem
        \begin{align*}
            \lim_{|h|\to 0^+}\sup_{\delta > 0}\|\varphi_\delta(\cdot + h) - \varphi_\delta(\cdot)\|_{L^2(D)} = 0.
        \end{align*}
        Thus, we may estimate using Jensen's inequality
        \begin{equation*}
        \begin{split}
            &\|(u(t))_\delta  - \varphi_\delta\|^2_{L^2(D)} = \|\rho_\delta\star\varphi_\delta  - \varphi_\delta\|^2_{L^2(D)} = \int_D\left|\int_{B(0, \delta)}\rho_\delta(y)(\varphi_\delta(x - y) - \varphi_\delta(x))\diff y\right|^2\diff x\\
            &\leq \int_D\int_{B(0, \delta)}\rho_\delta(y)|\varphi_\delta(x - y) - \varphi_\delta(x)|^2\diff y\diff x = \int_{B(0, \delta)}\rho_\delta(y)\|\varphi_\delta(\cdot - y) - \varphi_\delta(\cdot)\|^2_{L^2(D)}\diff y\\
            &\leq \sup_{|y| <\delta}\sup_{\delta > 0}\|\varphi_\delta(\cdot - y) - \varphi_\delta(\cdot)\|^2_{L^2(D)} \to 0, \text{ as }\delta\to 0.
        \end{split}
        \end{equation*}
        At last, by triangle inequality, the above, and \eqref{conv:some_conv_1}
        \begin{align}
            \|(u(t))_\delta - u(t)\|_{L^2(D)}\leq \|(u(t))_\delta  - \varphi_\delta\|_{L^2(D)} + \|\varphi_\delta - u(t)\|_{L^2(D)} \rightarrow 0,\text{ as }\delta\to 0.\label{conv:some_conv_2}
        \end{align}
        Now, since $(u(t))_\delta\in C_c^\infty(D)$, we can test \eqref{eq:weak_form_in_ito_lemma_proof} evaluated at $\tau\in [0, T]$ by it to obtain
        \begin{align*}
            &\int_D u(\tau)(u(t))_\delta\diff x = \int_D u_0(u(t))_\delta\diff x\\
            &\qquad- \int_0^\tau\int_D \alpha(s, x)\cdot\nabla_x(u(t))_\delta\diff x\diff s + \int_D \mathbb{M}(\tau)\,(u(t))_\delta\diff x.
        \end{align*}
        By \eqref{conv:some_conv_2} it follows
        \begin{align*}
            &\int_D u(\tau)(u(t))_\delta\diff x\rightarrow \int_D u(\tau)\,u(t)\diff x,\\
            &\int_D u_0(u(t))_\delta\diff x \rightarrow \int_D u_0\,u(t)\diff x,\\
            &\int_D \mathbb{M}(\tau)\,(u(t))_\delta\diff x \rightarrow \int_D \mathbb{M}(\tau)\,u(t)\diff x,
        \end{align*}
        and by \eqref{conv:modular_conv_ito_lemma_proof_delta}
        \begin{align*}
            \int_0^\tau\int_D \alpha(s, x)\cdot\nabla_x(u(t))_\delta\diff x\diff s \rightarrow \int_0^\tau\int_D \alpha(s, x)\cdot\nabla_x u(t)\diff x\diff s.
        \end{align*}
        Thus, we may indeed test \eqref{eq:weak_form_in_ito_lemma_proof} by the solution, whenever it makes sense. To end the proof, we subtract \eqref{eq:weak_form_in_ito_lemma_proof} evaluated in $s$ from \eqref{eq:weak_form_in_ito_lemma_proof} evaluated in $t$ and test the result by $u(t)$ and use the binomial identities to deduce
        \begin{align*}
            &\|\mathbb{M}(t) - \mathbb{M}(s)\|^2_{L^2(D)} - \|u(t) - u(s) - (\mathbb{M}(t) - \mathbb{M}(s))\|_{L^2(D)}^2\\
            &\phantom{=}+2\int_D u (s)\,(\mathbb{M}(t) - \mathbb{M}(s))\diff x\\
            &= 2\int_D u (t)\,(\mathbb{M}(t) - \mathbb{M}(s)) \diff x - \int_D|u (t) - u (s)|^2\diff x\\
            &= 2\int_D u (t)\,(u(t) - u(s)) \diff x - 2\int_s^t\int_D\alpha (r, x)\cdot\nabla_x u (t,x)\diff x\diff r\\
            &\phantom{=} - \int_D u^2(t) \diff x - \int_D u^2(s) \diff x + 2\int_D u (t)\,u (s)\, \diff x\\
            &= \int_D u^2(t) \diff x - \int_D u^2(s) \diff x- 2\int_s^t\int_D\alpha (r, x)\cdot\nabla_x u (t,x)\diff x\diff r.
        \end{align*}
        \item We have
        \begin{align*}
            \mathbb{E}\left(\sup_{t\in[0,T]}\|u(t)\|_{L^2(D)}^2\right) < +\infty .
        \end{align*}
        To prove it we fix $I_l$, $l\in\N$ be a sequence of partitions from Lemma \ref{lem:approx_by_floor_func}. Then, by our assumption $\mathbb{E}(\|u(t)\|_{L^2(D)}^2)< +\infty$ for a.e. $t\in [0, T]$, we may choose $I_l$ such that
        \begin{align}\label{decrete_bound_u_norm_ito_lemma}
        \mathbb{E}(\|u(t)\|_{L^2(D)}^2) < +\infty\quad  \text{ for all }t\in \bigcup_{l\in\N}I_l.
        \end{align}
        Thus, by \ref{eq:basic_eq_in_ito_lemma_proof} for any $t:= t_i^l\in I_l\setminus\{0, T\}$
        \begin{equation}\label{ito_lemma_proof_some_eq_1}
        \begin{split}
            &\int_D u^2(t) \diff x - \int_D u^2_0  \diff x\\
            &=\sum_{j = 0}^{i - 1}\left(\int_D u^2(t_{j+1}^l) \diff x - \int_D u^2(t_{j}^l) \diff x \right)\\
            &=2\int_0^t\int_D\alpha (s, x)\cdot\nabla_x \hat{u}^{l}(s,x)\diff x\diff s\\
            &\phantom{=}+2\int_0^t \langle\tilde{u}^{l}(s) ,\,h (s)\diff W(s)\rangle_{L^2(D)} + 2\int_D u_0 \,\mathbb{M} (t_1^l) \diff x\\
            &\phantom{=}+\sum_{j=0}^{i-1}\|(\mathbb{M}(t^l_{j+1}) - \mathbb{M}(t^l_j))  \|^2_{L^2(D)} - \|u(t^l_{j+1}) - u(t^l_{j})  - (\mathbb{M}(t^l_{j+1}) - \mathbb{M}(t^l_j))  \|_{L^2(D)}^2.
        \end{split}
        \end{equation}
        Now, by Lemma \ref{lem:approx_by_floor_func}, and the Fenchel--Young inequality we can deduce that
        \begin{align*}
            \mathbb{E}\left(\int_0^T\int_D|\alpha (s, x)\cdot\nabla_x \hat{u}^{l}(s,x)|\diff x\diff s\right) \leq C_1,
        \end{align*}
        where $C_1$ is independent of $l$. At the same time, due to the Burkholder--Davis--Gundy inequality \cite[Theorem 3.2.47]{LR17}, denoting the adjoint operator to $h(t)$ by $h^{\ast}(t)$ %\cite[Proposition D.0.1]{Liu2015Stochastic} 
        we get
        \begin{align*}
            &\mathbb{E}\left(\sup_{t\in[0, T]}\left|\int_0^t \langle\tilde{u}^{l}(s) ,\,h (s)\diff W(s)\rangle_{L^2(D)}\right|\right)\\
            &\leq C\,\mathbb{E}\left(\sqrt{\int_0^T \|h(s)^*\tilde{u}^{l}(s)\|_{L^2(D)}^2\diff s}\right)\\
            &\leq C\, \mathbb{E}\left(\sqrt{\int_0^T\|\tilde{u}^{l}(s)  \|^2_{L^2(D)}\|h (s)  \|^2_{HS(L^2(D))}\diff s}\right)\\
            &\leq \frac{1}{4}\mathbb{E}\left(\sup_{1\leq j\leq k_l-1}\|u (t_j^l)  \|^2_{L^2(D)}\right) + C\mathbb{E}\int_0^T\|h\|^2_{HS(L^2(D))}\diff t.
        \end{align*}
        Notice that by \eqref{decrete_bound_u_norm_ito_lemma}
        \begin{align*}
            \mathbb{E}\left(\sup_{1\leq j\leq k_l-1}\|u (t_j^l)  \|^2_{L^2(D)}\right) < +\infty.
        \end{align*}
        Finally, by \cite[Lemma 2.4.4]{Liu2015Stochastic}
        \begin{align*}
            &\mathbb{E}\left(\sum_{j=0}^{i-1}\|\mathbb{M}(t^l_{j+1}) - \mathbb{M}(t^l_j)\|^2_{L^2(D)}\right)\\
            &\phantom{=}=\sum_{j=0}^{i-1}\mathbb{E}\left(\int_{t_j^l}^{t_{j+1}^l}\|h(s)\|_{HS(L^2(D))}^2\diff s\right)\\
            &\phantom{=}=\mathbb{E}\int_0^{t_i^l}\|h\|_{HS(L^2(D))}^2\diff s.
        \end{align*}
        Combining all of the above we arrive at
        \begin{align*}
            \mathbb{E}\left(\sup_{t\in I_l\setminus\{T\}}\|u (t)  \|_{L^2(D)}^2\right)\leq C_2 
        \end{align*}
        for $C_2 >0$ independent of $l\in\mathbb{N}$. Therefore we can converge with $l\nearrow +\infty$ to obtain (after setting $I:= \bigcup_{l\geq 1}I_l\setminus\{T\}$)
        $$
        \mathbb{E}\left(\sup_{t\in I}\|u(t)\|_{L^2(D)}^2\right) \leq C_2,
        $$
        where we have used that $I_l\subset I_{l+1}$. Now fix $\{e_j \}_{j\in\N}$ an orthogonal basis of $W^{s,2}_0(D)$ (with eigenvalues $\lambda_j$), which is orthonormal in $L^2(D)$. Then, as
        $$
        \sum_{j=1}^N \frac{1}{\lambda_j}\prescript{}{W^{-s,2}(D)}{\langle} u(t),\,e_j\rangle_{W^{s,2}_0(D)}^2 \nearrow \|u(t)\|_{L^2(D)}^2 \quad \text{ as }N\to +\infty,
        $$
        for any $t\in[0, T]$ (here, we tacitly assume that $\|u(t)\|_{L^2(D)} = +\infty$ for times for which it is not well-defined). Thus, we can see that $t\mapsto \|u(t)\|_{L^2(D)}$ is l.s.c. almost surely. Since $I$ is dense in $[0, T]$, it follows that $\sup_{t\in[0,T]}\|u(t)\|_{L^2(D)}^2 = \sup_{t\in I}\|u(t)\|^2_{L^2(D)}$, and we have our claim.
        \item \begin{align}\label{conv:probability_in_proof_ito_lemma}
            \lim_{l\to +\infty}\sup_{t\in [0, T]}\left|\int_0^t\langle u (s) - \tilde{u}^{l}(s),\, h (s)\diff W(s)\rangle_{L^2(D)}\right| = 0\quad\text{ in probability}.
        \end{align}
        As the proof of this claim is done without changes as in \cite{Liu2015Stochastic} we skip it.
        \item \label{eq:Ito_lemma_localized_time}\eqref{eq:ito_formula_orlicz} holds for $t\in I$.\\
        Fix $t\in I\setminus\{0\}$. Then, for each sufficiently large $l\in\N$, there exists a unique $0 < i < k_l$ such that $t = t_i^l$. Due to Lemma \ref{lem:approx_by_floor_func} and \eqref{conv:probability_in_proof_ito_lemma} we can see that the first three terms on the right-hand side of \eqref{ito_lemma_proof_some_eq_1} converge in probability to 
        $$
        2\int_0^t\int_D\alpha (s, x)\cdot\nabla_x u (s,x)\diff x\diff s + 2\int_0^t \langle u (s) ,\,h (s)\diff W(s)\rangle_{L^2(D)}
        $$
        as $l\to +\infty$. Hence, by \cite[Lemma 2.4.4]{Liu2015Stochastic} 
        \begin{align*}
            &\int_D u^2(t) \diff x - \int_D u^2_0  \diff x\\
            &=2\int_0^t\int_D\alpha (s, x)\cdot\nabla_x u (s,x) \diff x\diff s + 2\int_0^t \langle u (s) ,\,h (s)\diff W(s)\rangle_{L^2(D)}\\
            &\phantom{=}+\int_0^t\|h (s)\|^2_{HS(L^2(D))}\diff s - \delta_0,
        \end{align*}
        where
        $$
        \delta_0 := P-\lim_{l\to+\infty}\sum_{j=0}^{i-1}\|u(t^l_{j+1}) - u(t^l_{j})   - (\mathbb{M}(t^l_{j+1}) - \mathbb{M}(t^l_j))  \|_{L^2(D)}^2.
        $$
        Let $P_n$ denote the projection onto the first $n$ vectors of the orthonormal basis in $L^2(D)$, which is orthogonal in $W^{s,2}_0(D)$, and $\hat{\mathbb{M}}^l, \tilde{\mathbb{M}}^l$ denote analogous quantities to those defined for $u$ in Lemma \ref{lem:approx_by_floor_func}. Then, using \eqref{eq:weak_form_in_ito_lemma_proof} we get
        \begin{align*}
            \delta_0 &= P-\lim_{l\to +\infty}\Big(\int_0^t\int_D \alpha (s,x)\cdot \nabla_x(\hat{u}^{l} - \tilde{u}^{l} - P_n(\hat{\mathbb{M}}^{l} - \tilde{\mathbb{M}}^{l}))\diff x\diff s\\
            &\phantom{=} + \int_D(u(t_1^l) - u_0 - \mathbb{M}(t_i^l))\,u(0)\diff x\\
            &\phantom{=} - \sum_{j = 0}^{i - 1}\int_D(u(t^l_{j+1}) - u(t^l_{j})  - (\mathbb{M}(t^l_{j+1}) - \mathbb{M}(t^l_j)) )\,(1 - P_n)(\mathbb{M}(t^l_{j+1}) - \mathbb{M}(t^l_j)) \diff x\Big).
        \end{align*}
        Since $u$ is weakly continuous in $L^2(D)$
        %this is done by using the Aubin-Lions lemma and getting strong continuity in some high negative Sobolev space%
        and $\mathbb{M}$ is continuous in $L^2(D)$, we may deduce
        $$
        \int_D(u(t_1^l) - u_0 - \mathbb{M}(t_i^l))\,u(0)\diff x \rightarrow 0 \quad \text{ as }l\to+\infty.
        $$
        Moreover, by Lemma \ref{lem:approx_by_floor_func}
        $$
        \int_0^t\int_D \alpha (s,x)\cdot \nabla_x(\hat{u}^{l} - \tilde{u}^{l})\diff x\diff s\rightarrow 0 \quad \text{ as }l\to +\infty,
        $$
        and $P_n \mathbb{M}(s)$ is a continuous process in $W^{s,2}_0(D)$, hence it satisfies the assumptions of Lemma \ref{lem:approx_by_floor_func} and we get
        $$
                \int_0^t\int_D \alpha (s,x)\cdot \nabla_x P_n(\hat{\mathbb{M}}^{l} - \tilde{\mathbb{M}}^{l})\diff x\diff s\rightarrow 0\text{, as }l\to +\infty.
        $$
        In the end, by the Cauchy--Schwarz inequality, \cite[Lemma 2.4.1, 2.4.4]{Liu2015Stochastic}
        \begin{align*}
            &\delta_0\leq P-\lim_{l\to +\infty}\left(\sum_{j=0}^{i-1}\|u(t^l_{j+1}) - u(t^l_{j})   - (\mathbb{M}(t^l_{j+1}) - \mathbb{M}(t^l_j))  \|_{L^2(D)}^2\right)^{\frac{1}{2}}\\
            &\phantom{=}\cdot\left(\sum_{j=0}^{i-1}\|(1 - P_n)(\mathbb{M}(t^l_{j+1}) - \mathbb{M}(t^l_j))  \|_{L^2(D)}^2\right)^{\frac{1}{2}}\\
            &= \delta_0^{\frac{1}{2}}\left(\int_0^t\|(1 - P_n)h (s) \|_{HS(L^2(D))}^2\diff s\right)^{\frac{1}{2}},
        \end{align*}
        which converges to $0$ with $n\to +\infty$. Therefore $\delta_0 = 0$.
        \item \eqref{eq:ito_formula_orlicz} holds for $t\in [0, T]\setminus I$.\\
        Fix a set $\Omega'\in \mathcal{F}$ with full probability such that the limit \eqref{conv:probability_in_proof_ito_lemma} holds pointwise in $\Omega'$ passing to a not relabeled subsequence if necessary, and \eqref{eq:ito_formula_orlicz} holds for all $t\in I$ in $\Omega'$. Now, if $t\notin I$, then for any $l$ there exists a unique interval $(t^l_{j(l)}, t^l_{j(l) + 1}]$, which contains it. Define $t(l) := t^l_{j(l)}$. From the definition of our partitions, we have $t(l)\nearrow t$ as $l\to +\infty$. By \eqref{eq:weak_form_in_ito_lemma_proof} and \ref{eq:Ito_lemma_localized_time} we deduce
        \begin{equation}\label{eq:some_equation_in_ito_lemma_proof_2}
        \begin{split}
            \|u(t(l)) - u(t(m))\|_{L^2(D)}^2 &= \|u(t(l)) \|_{L^2(D)}^2 - \|u(t(m)) \|_{L^2(D)}^2\\
            &\phantom{=}- \int_{t(m)}^{t(l)}\int_D\alpha (s, x)\cdot\nabla_x u (t(m),x)\diff x\diff s\\
            &\phantom{=}+2\int_{t(m)}^{t(l)} \langle u (t(m)) ,\,h (s)\diff W(s)\rangle_{L^2(D)}\\
            &= 2\int_{t(m)}^{t(l)}\int_D\alpha (s, x)\cdot\nabla_x (u (s,x) - u (t(m),x))\diff x\diff s\\
            &\phantom{=}+2\int_{t(m)}^{t(l)}\langle u(s) - u(t(m)) ,\,h (s)\diff W(s)\rangle_{L^2(D)}\\
            &\phantom{=}+\int_{t(m)}^{t(l)}\|h (s)\|_{HS(L^2(D))}^2\diff s\\
            &= 2\int_{t(m)}^{t(l)}\int_D\alpha (s, x)\cdot\nabla_x (u(s,x) - \hat{u}^{m}(s,x))\diff x\diff s\\
            &\phantom{=}+2\int_{t(m)}^{t(l)}\langle (u (s) - \hat{u}^{m}(s)) ,\,h (s)\diff W(s)\rangle_{L^2(D)}\\
            &\phantom{=}+\int_{t(m)}^{t(l)}\|h(s)\|_{HS(L^2(D))}^2\diff s.
        \end{split}
        \end{equation}
        Using \eqref{conv:probability_in_proof_ito_lemma} (holding pointwise in $\Omega'$ up to a not relabeled subsequence) and the continuity of the map $t\mapsto \int_0^t\|h (s)\|_{HS(L^2(D))}^2\diff s$ we deduce
    \begin{align*}
        \lim_{m\to +\infty}\sup_{l > m}\left\{2\left|\int_{t(m)}^{t(l)}\langle (u (s) - \hat{u}^{m}(s)) ,\,h (s)\diff W(s)\rangle_{L^2(D)}\right| + \int_{t(m)}^{t(l)}\|h (s)\|_{HS(L^2(D))}^2\diff s\right\} = 0\text{ in }\Omega'.
    \end{align*}
    Moreover by Lemma \ref{lem:approx_by_floor_func} we have
    \begin{align*}
        \lim_{m\to +\infty}\sup_{l > m}\int_{t(m)}^{t(l)}\int_D\alpha (s, x)\cdot\nabla_x(u (s,x) - \hat{u}^{m}(s,x))\diff x\diff s = 0,
    \end{align*}
    in some $\Omega''\subseteq \Omega'$ with full probability. Combining both of the convergences above with \eqref{eq:some_equation_in_ito_lemma_proof_2} we conclude
    \begin{align*}
        \lim_{m\to +\infty}\sup_{l\geq m} \|u(t(l)) - u(t(m)) \|_{L^2(D)}^2 = 0
    \end{align*}
    in $\Omega''$. Hence, $\{u (t(l)) \}_{l\in\N}$ is a Cauchy sequence in $L^2(D)$ and has a limit in this space. As the limit in, for example, $W^{-s, 2}(D)$ is equal to $u (t) $, therefore in $L^2(D)$ it is the same.
     In the end, letting $l\to +\infty$ in the equality obtained in \ref{eq:Ito_lemma_localized_time} we get our Claim in $\Omega''$.
    \item $u$ is strongly continuous in $L^2(D)$.\\
    Since the right hand side of \eqref{eq:ito_formula_orlicz} is continuous in $t\in [0, T]$ almost surely, so is the left-hand side, i.e. $t\mapsto \|u(t)\|_{L^2(D)}$ is a continuous function. Thus, the weak continuity of $u(t)$ in $L^2(D)$ implies its strong continuity.
    \end{enumerate}
\end{proof}

\section{Existence of solutions for the approximation}\label{proof:existence_theorem_approx}

\begin{proof}
    To prove our Theorem~\ref{thm:existence_for_approx_in_epsilon} we employ the standard Galerkin approximation. Let $\{\omega_j\}_{j\in\N}$ be an orthogonal basis of $W^{s,2}_0(D)$ for $s$ big enough, which is orthonormal in $L^2(D)$ (cf. \cite[Remark 4.14]{malek1996weakandmeasure}), and at the same time let $\{g_j\}_{j\in\N}$ be the orthonormal basis of $U$. Set
    \begin{align*}
        u^n(t) = \sum_{k = 1}^n \alpha^n_k(t) \omega_k(x),\quad W^n(t) = \sum_{k=1}^n\langle W(t),\, g_k\rangle_U\,g_k,
    \end{align*}
    where $\alpha_k^n(0) = \langle u_0, \omega_k\rangle_{L^2(D)}$. Then, by Theorem 3.1.1 \cite{Liu2015Stochastic} there exists an almost surely continuous, $\R^n$-valued, $(\mathcal{F}_t)_{t\geq 0}$ adapted process $(\alpha(t))_{t\in [0, T]}$ such that almost surely and for all times
    \begin{align*}
        \alpha^n(t) = \alpha^n(0) - \int_0^t\left[\int_D A^\varepsilon(s, x, \nabla_x u^n)\cdot\nabla_x\omega_k\diff x\right]_{k = 1}^n\diff s + \int_0^t\left[\langle \omega_r, h\diff W^n(s)\rangle_{L^2(D)}\right]_{k = 1}^n.
    \end{align*}
    Thus, by the finite-dimensional version of the It\^{o}'s lemma \ref{thm:orlicz_ito_formula} we get
    \begin{multline}\label{eq:ito_lemma_for_galerkin}
        \|u^n(t)\|^2_{L^2(D)} = \|u^n(0)\|^2_{L^2(D)} -2\int_0^t\int_D A^\varepsilon(s, x, \nabla_x u^n)\cdot\nabla_x u^n\diff x\diff s\\
        + \int_0^t \|h\|_{HS(U_n; L^2(D))}^2\diff s + 2\int_0^t\langle u^n, h\diff W^n(s)\rangle_{L^2(D)}
    \end{multline}
    for $U_n := \mathrm{span}\{g_1, ..., g_n\}$. Hence, after applying the expected value to both sides, by the coercivity assumption \ref{ass:A_varepsilon_3} and the fact that $u_0$ is in $L^2$, $h$ in $HS(L^2(D))$ we obtain:
    \begin{enumerate}
        \item $\{\nabla_x u^n\}_{n\in \N}$ is bounded in $L_m (\Omega\times Q_T)$,
        \item $\{A^\varepsilon(t, x, \nabla_x u^n)\}_{n\in \N}$ is bounded in $L_{m*} (\Omega\times Q_T)$,
        \item $\{u^n\}_{n\in \N}$ is bounded in $L^\infty(0, T; L^2(\Omega\times D))$, and therefore also in $L^2(\Omega;L^2(0,T;L^2(D)))$, since by \cite[Corollary 1.2.23, p.25]{HNVW16} this space is isometrically isomorphic to $L^2(0,T;L^2(\Omega;L^2(D)))$.
    By the modular Poincar\'{e} inequality ([Theorem 9.3, \cite{chlebicka2021book}])
        \item $\{u^n\}_{n\in \N}$ is bounded in $L_m (\Omega\times Q_T)$
    \end{enumerate}
    Thus by the Banach--Alaoglu Theorem there exists $u\in L_m(\Omega\times Q_T)\cap L^\infty(0, T; L^2(\Omega\times D))$ with $\nabla_x u\in L_m(\Omega\times Q_T)$ and $Y\in L_{m*}(\Omega\times Q_T)$, such that (up to the subsequence)
\begin{align*}
    	u_n &\rightharpoonup u&& \text{in} \ L^2(\Omega;L^2(0,T;L^2(D))),\\
            u_n &\wstar u && \text{ weakly* in }L_m(\Omega\times Q_T),\\
            \nabla_x u_n &\wstar \nabla_x u && \text{ weakly* in }L_m(\Omega\times Q_T),\\
            A^\varepsilon(t, x, \nabla_x u_n) &\wstar Y && \text{ weakly* in }L_{m*}(\Omega\times Q_T).
    \end{align*}
    %Similarly
    Thanks to the properties of the stochastic integral, the fact that $h\in S^2_W(0,T;W^{s,2}_0(D))$ and Burkholder-Davis-Gundy inequality
    \begin{align*}
       \int_0^t h\diff W^n_s \rightarrow \int_0^t h\diff W(s) \text{ in }\mathcal{M}^2_T(W^{s,2}_0(D)) \text{ for all $t\in [0,T]$ and in } L^2(\Omega;\mathcal{C}([0,T];W^{s,2}_0(D)))%L^\infty(0, T; L^2(\Omega\times D)).}
    \end{align*}
    Therefore, for any $\phi\in C^\infty_c(D)$ and $\psi\in L^\infty(\Omega\times (0, T))$
    \begin{equation*}
        \begin{split}
            &\mathbb{E}\left(\int_0^T\int_D u\,\phi\,\psi\diff x\diff t\right)\\
            &=\lim_{n\to +\infty}\mathbb{E}\left(\int_0^T\int_D u^n\,\phi\,\psi\diff x\diff t\right)\\
            &=\lim_{n\to +\infty}\mathbb{E}\bigg(\int_0^T\int_D u^n(0)\,\phi\,\psi\diff x\diff t\\
            &\phantom{=}- \int_0^T\int_D\int_0^t A^\varepsilon(s, x, \nabla_x u^n)\cdot\nabla_x(\phi\,\psi)\diff s\diff x\diff t\\
            &\phantom{=} + \int_0^T\int_0^t\langle\phi\,\psi,\,h\diff W^n_s\rangle_{L^2(D)}\diff t\bigg)\\
            &= \mathbb{E}\bigg(\int_0^T\int_D u_0\,\phi\,\psi\diff x\diff t\\
            &\phantom{=}- \int_0^T\int_D\int_0^t Y(s, x)\cdot\nabla_x(\phi\,\psi)\diff s\diff x\diff t\\
            &\phantom{=} + \int_0^T\int_0^t\langle\phi\,\psi,\,h\diff W(s)\rangle_{L^2(D)}\diff t\bigg),
        \end{split}
    \end{equation*}
    which readily implies a.e. in $\Omega\times (0,T)$
    \begin{align}\label{eq:almost_good_equation_for_approximation}
        u(t) = u_0 - \int_0^t\DIV_x Y(s, x)\diff s + \int_0^t h\diff W(s) \quad \text{ in }\mathcal{D}'(D)
    \end{align}
   Since $u\in L^2(\Omega\times Q_T)\cap L^1(\Omega\times (0,T); W^{1,1}_0(D))$ with $\nabla_x u\in L_m(\Omega\times Q_T)$, in particular, by Theorem \ref{thm:orlicz_ito_formula} $u$ is continuous in $L^2(D)$,
  \begin{align}\label{eq:ito_lemma_for_limit}
  		\begin{aligned}
   	&\|u(t)\|^2_{L^2(D)} = \|u_0\|^2_{L^2(D)} -2\int_0^t\int_D Y(s,x)\cdot\nabla_x u\diff x\diff s\\
   	&+ \int_0^t \|h\|_{HS(U_n; L^2(D))}^2\diff s + 2\int_0^t\langle u, h\diff W(s)\rangle_{L^2(D)},
\end{aligned}   
\end{align}
and
    \begin{align*}
        \mathbb{E}\left(\sup_{t\in [0, T]}\|u(t)\|_{L^2(D)}^2\right) < +\infty.
    \end{align*}
    It remains to identify the weak* limit $Y$ with $A^\varepsilon(\cdot, \nabla_x u)$. To this end we employ the monotonicity trick \ref{res:monot_trick}. Applying the expected value to \eqref{eq:ito_lemma_for_galerkin} gives us
    \begin{align*}
        2\mathbb{E}\int_0^t\int_D A^\varepsilon(s, x, \nabla_x u^n)\cdot\nabla_x u^n\diff x\diff s = -\mathbb{E}\|u^n(t)\|^2_{L^2(D)} + \mathbb{E}\|u^n(0)\|^2_{L^2(D)} + \mathbb{E}\int_0^t\|h\|^2_{HS(U_n; L^2(D))}\diff s.
    \end{align*}
    Thus, by the weak lower semicontinuity of the $L^2$-norm, strong convergence of $u^n(0)$ to $u_0$ and the continuity of the projection, after applying $\limsup$ to both sides of the above equation we get
    \begin{align}\label{ineq:some_ineq_galerkin}
    	\begin{aligned}
        &2\limsup_{n\to +\infty}\mathbb{E}\int_0^t\int_D A^\varepsilon(s, x, \nabla_x u^n)\cdot\nabla_x u^n\diff x\diff s\\
         &\leq -\mathbb{E}\|u(t)\|^2_{L^2(D)} + \mathbb{E}\|u_0\|^2_{L^2(D)} + \mathbb{E}\int_0^t \|h\|_{HS(L^2(D))}^2\diff s.
        \end{aligned}
    \end{align}
    Taking expectation in \eqref{eq:ito_lemma_for_limit} and subtracting it from \eqref{ineq:some_ineq_galerkin} it follows that
    \begin{align}\label{ineq:combined}
    	\limsup_{n\to +\infty}\mathbb{E}\int_0^t\int_D (A^\varepsilon(s, x, \nabla_x u^n)-Y(s,x))\cdot\nabla_x u^n\diff x\diff s\leq 0
    	\end{align}
%    At this spot, we may apply Theorem \ref{thm:orlicz_ito_formula} (and take the expected value of \eqref{eq:ito_formula_orlicz}) to $u$, which satisfies \eqref{eq:almost_good_equation_for_approximation}. We obtain
%    \begin{align}\label{eq:some_eq_galerkin}
%        2\mathbb{E}\int_0^t\int_D A(s, x, \nabla_x u)\cdot\nabla_x u\diff x\diff s =
%        -\mathbb{E}\|u(t)\|^2_{L^2(D)} + \mathbb{E}\|u_0\|^2_{L^2(D)} + \mathbb{E}\|h\|_{L_2(U; L^2(D))}^2.
%    \end{align}
%    Putting \eqref{eq:some_eq_galerkin} into \eqref{ineq:some_ineq_galerkin} we deduce
%    \begin{align}\label{ineq:for_galerkin_monotonicity}
%        \limsup_{n\to +\infty}\mathbb{E}\int_0^t\int_D A(s, x, \nabla_x u^n)\cdot\nabla_x u^n\diff x\diff s \leq \mathbb{E}\int_0^t\int_D A(s, x, \nabla_x u)\cdot\nabla_x u\diff x\diff s.
%    \end{align}
    With this in mind we can proceed with a standard argument. By the monotonicity assumption \ref{ass:A_varepsilon_2} for arbitrary $\eta\in L^\infty(\Omega\times Q_T)^d$
    \begin{align}\label{ineq:for_galerkin_monotonicity_2}
       \limsup_{n\rightarrow\infty} \mathbb{E}\int_0^T\int_D (A^\varepsilon(t, x, \nabla_x u^n) - A^\varepsilon(t, x, \eta))\cdot(\nabla_x u^n - \eta)\diff x\diff t \geq 0.
    \end{align}
    Now, we may write \eqref{ineq:for_galerkin_monotonicity_2} on the right-hand side of \eqref{ineq:combined}. Thanks to the weak* convergence of $A^\varepsilon(t, x, \nabla_x u^n)$ to $Y$, the weak* convergence of $\nabla_x u^n$ to $\nabla_x u$, and, since, by Proposition \ref{prop:stress_bound_for_bounded}, for essentially bounded $\eta$ the function $A(t, x, \eta)$ is actually in $E_{m*}$, we may partially pass to the limit in \eqref{ineq:for_galerkin_monotonicity_2} to obtain 
%    \begin{align*}
%        \mathbb{E}\int_0^T\int_D A(t, x, \nabla_x u^n)\cdot\eta\diff x\diff t = \mathbb{E}\int_0^T\int_D Y\cdot\eta\diff x\diff t.
%    \end{align*}
%    Now, by Proposition \ref{prop:stress_bound_for_bounded} for $\eta\in L^\infty$ the function $A(t, x, \eta)$ is actually in $E_{m*}$, and thus the weak* convergence of $\nabla_x u^n$ to $\nabla_x u$ implies
%    \begin{align*}
%        \mathbb{E}\int_0^T\int_D A(t, x, \eta)\cdot\nabla_x u^n\diff x\diff t = \mathbb{E}\int_0^T\int_D A(t, x, \eta)\cdot\nabla_x u\diff x\diff t.
%    \end{align*}
%    Combining both of the above with \eqref{ineq:for_galerkin_monotonicity} and \eqref{ineq:for_galerkin_monotonicity_2} gives us
    \begin{align*}
        \mathbb{E}\int_0^T\int_D (Y - A^\varepsilon(t, x, \eta))\cdot(\nabla_x u - \eta)\diff x\diff t \geq 0,
    \end{align*}
    which by \ref{res:monot_trick} grants us the thesis $Y(t, x) = A^\varepsilon(t, x, \nabla_x u)$ a.e. in $\Omega\times Q_T$.
\end{proof}

\section{Auxiliary propositions and lemmas}

\subsection{In Musielak--Orlicz spaces.}

Below, one can find a lemma concerning the modular approximation in Musielak--Orlicz spaces. We note here, that \cite{bulicek2021parabolic} contains proof of the first statement, but the second one can be easily tracked and proven the same way. The main ingredients are Assumption \ref{ass:N_func_M}, and the presence of the truncation operator.

\begin{prop}\textup{(\cite[Theorem 2.4]{bulicek2021parabolic})}\label{prop:double_mollifier_conv}
Let $D\in\R^d$,  $\psi: D\rightarrow \R$ be compactly supported, satisfying $0\leq\psi\leq 1$. Moreover let $u\in L^1(0, T; W^{1,1}_0(D))\cap L^\infty(0, T; L^2(D))$ with $\nabla_x u\in L_M(Q_T)$, where M is an N-function satisfying Assumption \ref{ass:N_func_M}. Then, there exists $\kappa_0$ such that for any $\kappa\in (0, \kappa_0)$
\begin{align*}
\nabla_x(T_k(u^\kappa)\psi)^\kappa &\xrightarrow{M} \nabla_x(T_k(u)\psi)&&\text{ modularly in }L_M(Q_T),\\
\nabla_x(T_k(u^\kappa))^\kappa\,\psi &\xrightarrow{M} \nabla_xT_k(u)\,\psi &&\text{ modularly in }L_M(Q_T).
\end{align*}
Here $f^\kappa$ is a standard mollification in a spatial variable of a function $f$.  
\end{prop}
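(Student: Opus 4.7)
I would follow closely the strategy developed in the proof of Theorem 2.4 in \cite{bulicek2021parabolic}, focusing on the first convergence (the second is an easier variant that is treated analogously at the end). The overall plan is to apply the Vitali-type characterization from Proposition \ref{prop:vitali_in_musielaki}: it suffices to prove that (i) the sequence $\nabla_x(T_k(u^\kappa) \psi)^\kappa$ converges in measure to $\nabla_x(T_k(u) \psi)$, and (ii) there exists $\lambda > 0$ such that the family $\{M(t, x, \nabla_x(T_k(u^\kappa)\psi)^\kappa / \lambda)\}_{\kappa}$ is uniformly equiintegrable on $Q_T$.

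For Step (i), write $g_\kappa := T_k(u^\kappa)\psi$, $g := T_k(u)\psi$, and $\phi_\kappa := (g_\kappa)^\kappa$. The chain rule gives $\nabla g_\kappa = T'_k(u^\kappa) \nabla u^\kappa \, \psi + T_k(u^\kappa) \nabla \psi$. Since $u^\kappa \to u$ a.e.\ along a subsequence and $\nabla u^\kappa = \rho_\kappa * \nabla u \to \nabla u$ in $L^1$, dominated convergence combined with continuity of $T_k$ produces $\nabla g_\kappa \to \nabla g$ in $L^1(Q_T)$. The identity $\nabla \phi_\kappa = \rho_\kappa * \nabla g_\kappa$ and the standard estimate $\|\rho_\kappa * \nabla g - \nabla g\|_{L^1} \to 0$ then give $\nabla \phi_\kappa \to \nabla g$ in $L^1(Q_T)$, hence in measure.

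For Step (ii), both complementary representations
\[
\nabla \phi_\kappa(x) = (\rho_\kappa * \nabla g_\kappa)(x) = (\nabla \rho_\kappa * g_\kappa)(x)
\]
are essential. From the second one, combined with $\|g_\kappa\|_{L^\infty} \leq k$, one obtains the critical pointwise bound $|\nabla \phi_\kappa(x)| \leq \|\nabla \rho_\kappa\|_{L^1} \|g_\kappa\|_{L^\infty} \leq C k \kappa^{-1}$. From the first representation, Jensen's inequality applied with $\rho_\kappa$ as a probability density yields
\[
M(t, x, \nabla \phi_\kappa(x)/\lambda) \leq \int \rho_\kappa(x-y) \, M(t, x, \nabla g_\kappa(y)/\lambda) \, dy.
\]
Assumption \ref{ass:N_func_M} then transfers the spatial variable: $M(t, x, \xi) \leq \Theta(t, |x-y|, |\xi|) M(t, y, \xi)$. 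Since $|x-y| \leq \kappa$ on the support of $\rho_\kappa(x-\cdot)$ and $\Theta$ is monotone in its second and third arguments, the factor $\Theta$ evaluated at $(t, \kappa, Ck\kappa^{-1})$ is uniformly bounded in $\kappa$ and $t$—this is precisely the third-argument exponent $C\delta^{-1}$ singled out in Assumption \ref{ass:N_func_M}. After Fubini in $x$ and use of convexity to split $M(t,y,\nabla g_\kappa(y)/\lambda)$ into contributions from $\psi T'_k(u^\kappa) \nabla u^\kappa$ and the bounded term $T_k(u^\kappa)\nabla \psi$, we bound the modular by a uniform constant coming from the $L_M$-bound on $\{\nabla u^\kappa\}$ (itself obtained by one more Jensen-plus-$\Theta$ application to $\nabla u^\kappa = \rho_\kappa * \nabla u$), plus a vanishing error. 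This gives both the uniform modular bound and equiintegrability.

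The main obstacle is the subtle book-keeping in Step (ii): the pointwise bound $|\nabla \phi_\kappa| \leq Ck\kappa^{-1}$ is available only at the level of the \emph{outer} object, while the integrand $|\nabla g_\kappa(y)|$ arising after Jensen could in principle grow as fast as $\kappa^{-d}$ coming from $\|\nabla\rho_\kappa\|_\infty$. The resolution—and the raison d'\^etre of Assumption \ref{ass:N_func_M} with exponent $\delta^{-1}$ rather than $\delta^{-d}$—is exactly that after applying Jensen on the outer mollification and then sending the spatial variable back via $\Theta$, the effective worst case is governed by the $L^\infty$ bound of $\nabla \phi_\kappa$ itself, not by the intermediate quantity $\nabla g_\kappa$; the large-argument region for $|\nabla g_\kappa|$ is reabsorbed into the integrable bound on $M(t,y,\nabla g_\kappa/\lambda)$ through convexity. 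The second convergence statement $\nabla_x(T_k(u^\kappa))^\kappa \psi \xrightarrow{M} \nabla_x T_k(u)\psi$ then follows by applying the same mollifier argument directly to $T_k(u^\kappa)$ (again bounded by $k$ with mollified-gradient bounded by $Ck\kappa^{-1}$) and multiplying by the fixed bounded function $\psi$, which clearly preserves modular convergence in $L_M$.
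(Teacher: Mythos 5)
This proposition is quoted by the paper directly from \cite[Theorem 2.4]{bulicek2021parabolic}; the present paper supplies no proof of its own, so there is no internal proof to compare against. Your framework (Vitali characterization via Proposition \ref{prop:vitali_in_musielaki}, convergence in measure, and uniform equiintegrability of the modular through Jensen plus Assumption \ref{ass:N_func_M}) is the correct skeleton, and the convergence-in-measure part (Step (i)) is sound.

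However, Step (ii) has a genuine gap, and you yourself put your finger on it before waving it away. After applying Jensen with kernel $\rho_\kappa$ to $\nabla\phi_\kappa = \rho_\kappa * \nabla g_\kappa$, the quantity that sits inside $M$ is $\nabla g_\kappa(y)/\lambda$, and the $\Theta$-transfer from $M(t,x,\cdot)$ to $M(t,y,\cdot)$ carries the factor $\Theta(t,|x-y|,|\nabla g_\kappa(y)|/\lambda)$. The assumption only controls $\Theta(t,\delta, C\delta^{-1})$; it says nothing about $\Theta(t,\delta,r)$ for $r$ unrelated to $\delta^{-1}$, and since $\Theta$ is assumed nondecreasing in its third argument, this factor can blow up where $|\nabla g_\kappa(y)|$ is large. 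The quantity $|\nabla g_\kappa(y)| \le |(\nabla u)^\kappa(y)| + k\|\nabla\psi\|_\infty$ is \emph{not} bounded by $Ck\kappa^{-1}$ — the $L^\infty$ estimate you quote ($\|\nabla\rho_\kappa\|_1\,\|g_\kappa\|_\infty \le Ck\kappa^{-1}$) controls the \emph{outer} object $\nabla\phi_\kappa$, not the \emph{inner} integrand $\nabla g_\kappa$. If instead you apply $\Theta$ to $\nabla\phi_\kappa(x)$ first (which is legitimately bounded by $Ck\kappa^{-1}$), the spatial variable in $M(t,y,\cdot)$ is now decoupled from the spatial variable inside $\nabla\phi_\kappa(x)$, and a subsequent Jensen step leaves you with a two-point quantity $M(t,y,\nabla g_\kappa(z)/\lambda)$ where $y\neq z$; closing that is exactly the nontrivial content of the cited theorem, and the phrase "the large-argument region for $|\nabla g_\kappa|$ is reabsorbed into the integrable bound through convexity" is an assertion, not an argument. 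As written, the proposal does not establish the required equiintegrability, and I do not see how to complete it along the lines sketched without introducing the additional machinery of \cite{bulicek2021parabolic}.
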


\begin{prop}\textup{(\cite[Lemma 4.1]{bulicek2021parabolic})}\label{prop:conv_of_localization}
Let $D\subset \R^d$ be a Lipschitz domain, and M be an $N$-function satisfying Assumption \ref{ass:N_func_M}. Then there exists a family of functions $\{\psi_j\}_{j\in\N}\subset C_c^\infty(D)$ fulfilling $0\leq\psi_j\leq 1$, and $\psi_j\to 1$ as $j\to +\infty$, such that for any $u\in L^1(0, T; W^{1,1}_0(D))\cap L^\infty(0, T; L^2(D))$ with $\nabla_x u\in L_M(Q_T)$, we have
$$
\int_0^t\int_D M\left(s, x, \left|\frac{\nabla_x \psi(x) u(s, x)}{C_u}\right|\right)\diff x\diff s\to 0 \quad\text{ as }j\to +\infty,
$$
where the constant $C_u$ can be chosen as $C_u = C\|\nabla_x u\|_{L_M(Q_T)}$, where $C$ depends only on $D$.
\end{prop}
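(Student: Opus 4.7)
The plan is to build the cutoffs $\psi_j$ from a smoothed distance to $\partial D$, use the zero-trace property of $u$ to represent $u$ on the collar $S_j := \{x : \dist(x, \partial D) < 2/j\}$ as a normal-direction integral of $|\nabla_x u|$, and finally exchange the base point of $M(s, x, \cdot)$ for $M(s, y, \cdot)$ via Assumption \ref{ass:N_func_M}, tuned to the collar thickness. The main obstacle will be choreographing Jensen's inequality with the spatial change of base point in $M$, so that the $\Theta$-factor that finally appears is evaluated precisely at the admissible scale $\Theta(s, \delta, C\delta^{-1})$ required by Assumption \ref{ass:N_func_M}.

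First I would construct $\psi_j$. Let $\varrho(x) := \dist(x, \partial D)$. Since $D$ is Lipschitz, one can smooth $\varrho$ to obtain $\rho \in C^\infty(D) \cap C(\overline D)$ with $c^{-1}\varrho \leq \rho \leq c\varrho$ and $|\nabla \rho| \leq c$. Fix $\eta \in C^\infty([0,\infty); [0, 1])$ with $\eta \equiv 0$ on $[0, 1]$, $\eta \equiv 1$ on $[2, \infty)$, and set $\psi_j(x) := \eta(j\rho(x))$. Then $\psi_j \in C_c^\infty(D)$, $0 \leq \psi_j \leq 1$, $\psi_j \to 1$ pointwise on $D$, and $|\nabla \psi_j| \leq C_0\, j\, \mathbf{1}_{S_j}$ where $S_j := \{x \in D : \rho(x) < 2/j\}$.

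Next I would combine a Hardy-type identity with Jensen's inequality. Using finitely many Lipschitz charts that flatten $\partial D$, for a.e.\ $x \in S_j$ there is a bi-Lipschitz normal segment $\Phi_x : [0, L(x)] \to \overline D$ with $\Phi_x(0) \in \partial D$, $\Phi_x(L(x)) = x$, length $L(x) \leq 3/j$, and Jacobian bounded uniformly in $j$ and in the chart. Since $u(s,\cdot) \in W^{1,1}_0(D)$,
\begin{equation*}
|u(s, x)| \leq \int_0^{L(x)} |\nabla_x u(s, \Phi_x(\tau))|\, \diff\tau,
\end{equation*}
which combined with $|\nabla \psi_j| \leq C_0 j$ and $L(x) \leq 3/j$ gives
\begin{equation*}
\frac{|\nabla \psi_j(x)\, u(s, x)|}{C_u} \leq \frac{C_1}{C_u}\, \frac{1}{L(x)}\int_0^{L(x)} |\nabla_x u(s, \Phi_x(\tau))|\, \diff\tau.
\end{equation*}
Applying Jensen's inequality to the convex function $r \mapsto M(s, x, r)$ yields
\begin{equation*}
M\!\left(s, x, \frac{|\nabla \psi_j(x)\, u(s,x)|}{C_u}\right) \leq \frac{1}{L(x)}\int_0^{L(x)} M\!\left(s, x, \frac{C_1 |\nabla_x u(s, \Phi_x(\tau))|}{C_u}\right) \diff\tau.
\end{equation*}

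Now I would switch the spatial base point in $M$. Since $|x - \Phi_x(\tau)| \leq 3/j =: \delta_j$, Assumption \ref{ass:N_func_M} gives $M(s, x, \xi) \leq \Theta(s, \delta_j, |\xi|)\, M(s, \Phi_x(\tau), \xi)$. Split the integrand according to whether $C_1|\nabla_x u(s, \Phi_x(\tau))|/C_u \leq K\delta_j^{-1}$: on that set, $\Theta(s, \delta_j, K\delta_j^{-1})$ is uniformly bounded in $s$ and $j$ by the growth hypothesis of Assumption \ref{ass:N_func_M}; on the complementary set, the choice $C_u = C\|\nabla_x u\|_{L_M(Q_T)}$ together with Chebyshev's inequality in the modular sense show that the contribution is equiintegrable and supported on a set of measure tending to zero. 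Integrating over $x \in D$ and $s \in (0, t)$, applying Fubini together with the bi-Lipschitz change of variables $x \mapsto (\Phi_x(\tau), \tau)$ (whose Jacobian is bounded), one obtains
\begin{equation*}
\int_0^t\!\int_D M\!\left(s, x, \frac{|\nabla \psi_j u|}{C_u}\right) \diff x\, \diff s \leq C_2 \int_0^t\!\int_{S_j'} M\!\left(s, y, \frac{C_3 |\nabla_x u(s, y)|}{C_u}\right) \diff y\, \diff s + o_j(1),
\end{equation*}
with $S_j' := \{y \in D : \rho(y) \leq 4/j\}$. The choice $C_u = C\|\nabla_x u\|_{L_M(Q_T)}$ guarantees that the integrand on the right belongs to $L^1(Q_T)$, and since $|S_j'| \to 0$, absolute continuity of the integral closes the argument.
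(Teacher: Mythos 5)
The paper does not give a proof of this proposition; it simply cites \cite[Lemma 4.1]{bulicek2021parabolic}. Your overall skeleton (construct $\psi_j$ from a smoothed distance, bound $|u|$ on the collar by the normal-direction Hardy integral of $|\nabla_x u|$, apply Jensen, then exchange base points via Assumption~\ref{ass:N_func_M}, Fubini, and absolute continuity) is indeed the strategy behind that lemma, and several of your technical points are sound: with $\nabla\psi_j$ supported in the annulus $\{1/j < \rho < 2/j\}$, the Fubini step only produces a factor $\int_{\max(\tau,1/j)}^{2/j}\frac{\diff x_d}{x_d} \le \log 2$, so the logarithmic singularity that would arise from a full one-sided collar does not appear, and the Jensen step is legitimate because isotropy and (N3) make $r\mapsto M(s,x,r)$ convex on $[0,\infty)$.

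However, the handling of the \emph{above-threshold} set is a genuine gap, and it is precisely where the difficulty of the lemma lives. After Jensen, base-point change and Fubini, your right-hand side has the form
$$
\int_0^t\int_{S_j'}\Theta\!\left(s,\,\delta_j,\,\tfrac{C_1|\nabla_x u(s,y)|}{C_u}\right)\,M\!\left(s,\,y,\,\tfrac{C_1|\nabla_x u(s,y)|}{C_u}\right)\diff y\diff s .
$$
On $\{C_1|\nabla_x u|/C_u \le K\delta_j^{-1}\}$ the factor $\Theta$ is controlled by $\limsup_{\delta\to 0}\Theta(s,\delta,C\delta^{-1})$ and your absolute-continuity argument closes the estimate. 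But on the complementary set $\{C_1|\nabla_x u|/C_u > K\delta_j^{-1}\}$ Assumption~\ref{ass:N_func_M} gives \emph{no} bound on $\Theta(s,\delta_j,r)$ --- it is nondecreasing in $r$ and is only controlled at the scale $r\sim\delta^{-1}$. Invoking ``Chebyshev in the modular sense'' shows that the bad set has measure tending to zero, and $M(s,y,C_1|\nabla_x u(s,y)|/C_u)\in L^1(Q_T)$ is equiintegrable, but neither fact controls the \emph{product} $\Theta\cdot M$, since $\Theta$ can blow up arbitrarily fast there. Nor can you avoid the base-point change on the bad set: before changing base points you only have $M(s,x,\cdot)$ with $x\neq y$, and the only base-point-free bound, $M(s,x,\xi)\leq m(|\xi|)$ from (N4), requires $\nabla_x u\in L_m$, which is not assumed. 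So as written the argument does not close. A repair must genuinely exploit either a cut-off on $u$ (the application in Lemma~\ref{lem:trunc_energy_eq} uses $T_{k,\delta}(u)$, which is bounded) or a more careful decomposition of $\nabla_x u$ before Jensen, so that the argument fed into $\Theta$ never exceeds the admissible scale $C\delta_j^{-1}$.
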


\begin{lem}\label{res:monot_trick}\textup{(\cite[Lemma 2.16]{bulicek2021parabolic})}
Let $A$ satisfy Assumption \ref{ass:stress_A}, $M$ be an $N$-function. Assume there are $\chi \in L_{M^*}(Q_T)$ and $\xi \in L_M(Q_T)$, such that
\begin{equation*}
\int_{Q_T} \left(\chi - A(t,x,\eta)\right) \cdot (\xi - \eta)\diff t \diff x \geq 0
\end{equation*}
for all $\eta \in L^{\infty}(Q_T)$. Then
$$
A(t,x,\xi) = \chi(t,x) \mbox{ a.e. in } Q_T.
$$
\end{lem}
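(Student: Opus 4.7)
The approach is the classical Minty monotonicity trick, adapted to the Musielak–Orlicz setting. The main difficulty is that $\xi\in L_M(Q_T)$ is generally not bounded, whereas the admissible test functions $\eta$ in the hypothesis must lie in $L^\infty(Q_T)$. I would proceed in three stages: first, introduce a localized perturbation test function built from $\xi$ truncated on a sub-level set; second, let the truncation level tend to infinity to obtain an intermediate Minty-type inequality on all of $Q_T$; third, pass the perturbation parameter to zero via hemicontinuity of $A$ in its third variable.

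Concretely, set $E_k:=\{(t,x)\in Q_T:\,|\xi(t,x)|\leq k\}$ for $k\in\mathbb{N}$, so that $|E_k^c|\searrow 0$ and, since $\chi\cdot\xi\in L^1(Q_T)$ by Fenchel–Young, $\int_{E_k^c}|\chi\cdot\xi|\to 0$. For arbitrary $\phi\in L^\infty(Q_T)$ and $\lambda>0$, take $\eta:=\xi\mathbf{1}_{E_k}+\lambda\phi\in L^\infty(Q_T)$, with $\|\eta\|_\infty\leq k+\lambda\|\phi\|_\infty$. Plugging this into the hypothesis, using $A(t,x,0)=0$ on $E_k^c$ (where $\xi\mathbf{1}_{E_k}$ vanishes), and splitting the integral over $E_k\cup E_k^c$ gives, after a short computation,
\begin{equation*}
\lambda\int_{E_k}(A(\xi+\lambda\phi)-\chi)\cdot\phi\,\diff t\diff x \;+\; R_k(\lambda,\phi)\;\geq\;0,
\end{equation*}
with error
\begin{equation*}
R_k(\lambda,\phi):=\int_{E_k^c}\chi\cdot\xi\diff t\diff x-\lambda\!\int_{E_k^c}\chi\cdot\phi\diff t\diff x-\int_{E_k^c}A(\lambda\phi)\cdot(\xi-\lambda\phi)\diff t\diff x.
\end{equation*}
Since $|A(\lambda\phi)|\leq C(\lambda\|\phi\|_\infty)$ uniformly by Proposition \ref{prop:stress_bound_for_bounded} and $\chi,\xi\in L^1(Q_T)$ (the Young-function lower bounds embed $L_M,L_{M^*}\hookrightarrow L^1$), dominated convergence yields $R_k(\lambda,\phi)\to 0$ as $k\to\infty$. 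Granting $A(\xi+\lambda\phi)\cdot\phi\in L^1(Q_T)$ for each fixed $\lambda>0$ (discussed below), dominated convergence on $E_k$ and division by $\lambda>0$ give the intermediate Minty inequality
\begin{equation*}
\int_{Q_T}(A(\xi+\lambda\phi)-\chi)\cdot\phi\,\diff t\diff x\;\geq\;0\qquad\forall\,\phi\in L^\infty(Q_T),\ \lambda>0. \tag{$\ast$}
\end{equation*}

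The final step is the passage $\lambda\to 0^+$. By strict monotonicity \ref{ass:on_A_3}, for $0<\lambda_1<\lambda_2$,
\begin{equation*}
\bigl(A(\xi+\lambda_2\phi)-A(\xi+\lambda_1\phi)\bigr)\cdot(\lambda_2-\lambda_1)\phi\;\geq\;0
\end{equation*}
pointwise, so $\lambda\mapsto A(\xi+\lambda\phi)\cdot\phi$ is pointwise non-decreasing on $(0,\infty)$, while continuity of $A(t,x,\cdot)$ gives $A(\xi+\lambda\phi)\cdot\phi\to A(\xi)\cdot\phi$ a.e.\ as $\lambda\searrow 0^+$. Applied to the non-negative decreasing quantity $A(\xi+\lambda\phi)\cdot\phi-A(\xi)\cdot\phi$ with an $L^1$-dominating first term, monotone convergence produces $\int_{Q_T}A(\xi+\lambda\phi)\cdot\phi\to\int_{Q_T}A(\xi)\cdot\phi$. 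Passing to the limit in $(\ast)$ then yields $\int_{Q_T}(A(\xi)-\chi)\cdot\phi\,\diff t\diff x\geq 0$ for every $\phi\in L^\infty(Q_T)$; replacing $\phi$ by $-\phi$ gives the reverse inequality, so $\int_{Q_T}(A(\xi)-\chi)\cdot\phi\,\diff t\diff x=0$ for every such $\phi$, which forces $A(t,x,\xi)=\chi(t,x)$ a.e.\ in $Q_T$.

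The only delicate point—and the main obstacle—is the integrability claim $A(\xi+\lambda\phi)\in L_{M^*}(Q_T)$ (equivalently $\subset L^1(Q_T)$) for each fixed $\lambda>0$, invoked in both the $k\to\infty$ and the $\lambda\to 0^+$ passages. Since $M$ is not assumed to satisfy a $\Delta_2$-condition, this does not follow directly from $\xi\in L_M$ and $\phi\in L^\infty$. The resolution rests on the pointwise coercivity bound $M^*(A(\eta))\leq c\,A(\eta)\cdot\eta+g-M(\eta)$ from \ref{ass:on_A_2}, evaluated at $\eta=\xi+\lambda\phi$, and absorbing the product $A(\eta)\cdot\eta$ via Fenchel–Young, thereby extracting a modular bound on $A(\xi+\lambda\phi)$ in $L_{M^*}(Q_T)$ in terms of $g$ and a modular-type seminorm of $\xi+\lambda\phi$; this is the standard technical passage underpinning the Musielak–Orlicz framework throughout the paper.
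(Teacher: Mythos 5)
The paper cites this lemma from \cite{bulicek2021parabolic} without reproducing the proof, so your argument has to stand on its own. Your overall plan (perturb a truncation of $\xi$ by $\lambda\phi$, send the truncation level and then $\lambda$ to zero) is the right family of ideas, and the decomposition into the $E_k$-part plus the remainder $R_k(\lambda,\phi)$, together with $R_k\to 0$, is correct. But the point you yourself flag as delicate is a genuine gap, and the resolution you sketch does not close it. Running Fenchel--Young on the coercivity bound \ref{ass:on_A_2} in the only way that lets you absorb $M^*(A(\eta))$ produces a pointwise inequality of the type $M^*(t,x,A(t,x,\eta))\leq C\,M(t,x,C'\eta)+C''\,g(t,x)$ with some $C'>1$ (for instance, splitting $A(\eta)\cdot\eta\leq\frac{1}{2c}M^*(A(\eta))+M(2c\,\eta)$ gives $C'=2c$). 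Integrating over $Q_T$ with $\eta=\xi+\lambda\phi$ leaves you needing $\int_{Q_T}M(t,x,C'(\xi+\lambda\phi))<\infty$, which does not follow from $\xi\in L_M(Q_T)$: membership in $L_M$ only gives $\int M(\xi/\lambda_0)<\infty$ for \emph{some} $\lambda_0>0$, and without a $\Delta_2$-condition on $M$ you cannot scale up past that $\lambda_0$. Since this integrability is used both to pass $k\to\infty$ in $(\ast)$ and to dominate the $\lambda\to 0^+$ limit, the argument is incomplete as written.

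The standard repair is to make the whole reduction pointwise before perturbing, so that no integrability of $A(\xi)$ is ever needed. Fix $\eta_0\in L^\infty(Q_T)^d$ and an arbitrary measurable $B\subset Q_T$, and test the hypothesis with $\eta:=\eta_0\mathbf{1}_{B\cap E_k}+\xi\mathbf{1}_{E_k\setminus B}\in L^\infty(Q_T)$. Using $A(t,x,0)=0$ on $E_k^c$ one gets
\begin{equation*}
\int_{B\cap E_k}(\chi-A(t,x,\eta_0))\cdot(\xi-\eta_0)\diff t\diff x+\int_{E_k^c}\chi\cdot\xi\diff t\diff x\;\geq\;0.
\end{equation*}
Here $(\chi-A(\eta_0))\cdot(\xi-\eta_0)\in L^1(Q_T)$, since every cross term pairs a bounded function with an $L^1$ one (Proposition \ref{prop:stress_bound_for_bounded} gives $A(\eta_0)\in L^\infty$, and $\xi,\chi\in L^1$ via the Young lower bounds; $\chi\cdot\xi\in L^1$ by the H\"older inequality in Musielak--Orlicz spaces). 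Letting $k\to\infty$ yields $\int_B(\chi-A(\eta_0))\cdot(\xi-\eta_0)\geq 0$ for every $B$, hence $(\chi-A(t,x,\eta_0))\cdot(\xi-\eta_0)\geq 0$ a.e.\ for every $\eta_0\in L^\infty$. Now apply this with $\eta_0:=\xi\mathbf{1}_{E_j}+\lambda z$ for a fixed $z\in\R^d$ and $\lambda>0$: on $E_j$ it reads $(A(t,x,\xi+\lambda z)-\chi)\cdot z\geq 0$, and letting $j\to\infty$ this holds a.e.\ in $Q_T$. The passage $\lambda\to 0^+$ is then taken \emph{pointwise} via continuity of $A(t,x,\cdot)$, giving $(A(t,x,\xi)-\chi)\cdot z\geq 0$ a.e.; running $z$ over a countable dense set of $\R^d$ and replacing $z$ by $-z$ forces $A(t,x,\xi)=\chi$ a.e.\ in $Q_T$.
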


\subsection{For a stochastic integral.}

% \begin{prop}\textup{(\cite[Proposition 2.16]{daprato2014stochastic})}\label{prop:growth_for_gaussian_measure}
%     Let $\mu$ be a Gaussian probability measure defined on a Hilbert space $H$ with mean $0$ and covariance matrix $Q$. Let
%     $$
%     s_Q := \inf_{\lambda\in \sigma(Q)}\frac{1}{2\lambda} = \frac{1}{2\|Q\|},
%     $$
%     where $\sigma(Q)$ is the spectrum of $Q$. Then, for any $s\in [0, s_Q)$, we have
%     $$
%     \int_H e^{s\|x\|_H^2}\diff\mu(x) = \exp\left(\frac{1}{2}\sum_{k = 1}^\infty \frac{(2s)^k}{k!}\mathrm{tr}(Q^k)\right).
%     $$
% \end{prop}

\begin{prop}\textup{(Standard It\^{o}'s formula, \cite[Theorem 4.17]{daprato2014stochastic})}\label{prop:standard_ito}
    Let $\Phi$ be an $L_2(U; H)$ valued process, stochastically integrable in $[0, T]$, $\phi$ is an $H$ valued, predictable proces, Bochner integrable on $[0, T]$ almost surely, $X(0)$ is an $\mathcal{F}_0$ measurable, $H$ valued random variable, and $W$ is a $U$ valued $Q-$Wiener process. Then, the following process
    $$
        X(t) = X(0) + \int_0^t\phi(s)\diff s + \int_0^t \Phi(s)\diff W(s), \qquad t\in [0, T],
    $$
    is well-defined. Assume that the function $F:[0, T]\times H \rightarrow \R$ and its partial derivatives $F_t$, $F_x$, $F_{xx}$ are uniformly continuous on bounded subsets of $[0, T]\times H$. Then, almost surely and for all $t\in [0, T]$
    \begin{equation*}
        \begin{split}
            F(t, X(t)) =& F(0, X(0)) + \int_0^t\langle F_x(s, X(s)), \,\Phi(s)\diff W(s)\rangle + \int_0^t F_t(s, X(s)) + \langle F_x(s, X(s)),\,\phi(s)\rangle\diff s\\
            & + \int_0^t\frac{1}{2}\mathrm{tr}(F_{xx}(s, X(s))(\Phi(s)Q^{\frac{1}{2}})(\Phi(s)Q^{\frac{1}{2}})^*)\diff s.
        \end{split}
    \end{equation*}
\end{prop}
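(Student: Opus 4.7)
The plan is to reduce this infinite-dimensional Itô formula to the classical finite-dimensional Itô formula via a projection onto finite-dimensional subspaces, and then pass to the limit using the uniform continuity of $F$, $F_t$, $F_x$, $F_{xx}$ on bounded sets. Since the result is a standard Hilbert-space version, I would follow the scheme in Da Prato--Zabczyk rather than invent anything new.

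First, I would use a standard localization argument. Introduce the stopping times
$$
\tau_R := \inf\Bigl\{t\in[0,T] \,:\, \|X(t)\|_H + \int_0^t\|\phi(s)\|_H\,\diff s + \int_0^t\|\Phi(s)\|_{L_2(U;H)}^2\,\diff s \geq R\Bigr\}\wedge T,
$$
and prove the formula on $[0,\tau_R]$; since $\tau_R\nearrow T$ almost surely as $R\to\infty$, this extends to $[0,T]$. On $[0,\tau_R]$ the process $X$ stays in a fixed bounded ball $B_R\subset H$, where $F,F_t,F_x,F_{xx}$ are uniformly continuous and (by local boundedness) bounded.

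Next, fix an orthonormal basis $\{e_j\}_{j\in\N}$ of $H$ and let $P_n$ be the orthogonal projection onto $H_n:=\operatorname{span}\{e_1,\dots,e_n\}$. Because $P_n$ is bounded and linear, $X^n(t):=P_nX(t)$ is a semimartingale in $H_n\cong\R^n$ with
$$
X^n(t)=P_nX(0)+\int_0^tP_n\phi(s)\,\diff s+\int_0^tP_n\Phi(s)\,\diff W(s),
$$
by \cite[Lemma 2.4.1]{Liu2015Stochastic}. To $F|_{[0,T]\times H_n}$ and $X^n$ one applies the classical finite-dimensional Itô formula, yielding the identity at level $n$ with $F_x^n=P_nF_x$, $F_{xx}^n=P_nF_{xx}P_n$ and the trace $\tfrac12\operatorname{tr}\bigl(F_{xx}(s,X^n)(P_n\Phi Q^{1/2})(P_n\Phi Q^{1/2})^{\ast}\bigr)$.

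Finally, I would pass to the limit $n\to\infty$ term by term. The pointwise convergences $P_nX(t)\to X(t)$ in $H$ and $P_n\phi\to\phi$, $P_n\Phi Q^{1/2}\to\Phi Q^{1/2}$ in Hilbert--Schmidt norm follow from strong convergence $P_n\to I$ together with dominated convergence. Combined with uniform continuity of $F,F_t,F_x$ on the bounded set $[0,T]\times B_R$, this handles the $F(t,X^n(t))$ term and the two Lebesgue integrals. For the stochastic integral I would use Itô's isometry and dominated convergence: the integrand $F_x(s,X^n(s))\circ P_n\Phi(s)$ converges in $L^2(\Omega\times[0,T];L_2(U;\R))$ to $F_x(s,X(s))\circ\Phi(s)$. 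The hardest ingredient, as expected, is the trace term: one must show
$$
\operatorname{tr}\bigl(F_{xx}(s,X^n)(P_n\Phi Q^{1/2})(P_n\Phi Q^{1/2})^{\ast}\bigr)\longrightarrow \operatorname{tr}\bigl(F_{xx}(s,X)(\Phi Q^{1/2})(\Phi Q^{1/2})^{\ast}\bigr).
$$
The key observation is that $F_{xx}$ defines a continuous bilinear form on $H\times H$ with locally uniformly bounded norm, so if $T_n:=P_n\Phi Q^{1/2}\to T:=\Phi Q^{1/2}$ in $L_2(U;H)$ and $F_{xx}(s,X^n)\to F_{xx}(s,X)$ in operator norm on the bounded set (by uniform continuity), then $\operatorname{tr}(A_n T_nT_n^{\ast})=\sum_k\langle A_nT_ng_k,T_ng_k\rangle\to\sum_k\langle AT g_k,Tg_k\rangle$ with the sums dominated uniformly by $\sup_n\|A_n\|_{\mathrm{op}}\|T_n\|_{L_2}^2$, so Vitali/dominated convergence concludes. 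Applying dominated convergence in $(\omega,s)$ using the stopping-time bound $R$ then completes the proof on $[0,\tau_R]$, and sending $R\to\infty$ gives the result.
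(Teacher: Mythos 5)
This proposition is stated in the paper as a quoted classical result, with a direct citation to Da Prato--Zabczyk \cite[Theorem 4.17]{daprato2014stochastic}, and the paper supplies no proof of its own. Your sketch is a faithful reconstruction of the standard argument from that reference (localization by stopping times, finite-dimensional projection $P_n$, finite-dimensional It\^{o} formula, passage to the limit using uniform continuity of $F$, $F_t$, $F_x$, $F_{xx}$ on bounded sets and the Hilbert--Schmidt estimate for the trace term), so it is consistent with the source the paper cites.
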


%to add: vector valued spaces notation i.e $\nabla_x u\in L_M(Q_T)^d$ etc...

\bibliographystyle{abbrv}
\bibliography{parpde_mo_discmodul}
\end{document}